\def\nd{\noindent}
\newtheorem{theorem}{Theorem}[section]
\newtheorem{thmlet}{Theorem}
\newtheorem{definition}{Definition}[section]
\newtheorem{lemma}{Lemma}[section]
\newtheorem{proposition}{Proposition}[section]
\newtheorem{corollary}{Corollary}[section]
\newcommand{\fim}{\hfill\rule{2mm}{2mm}}
\begin{document}
\title{
\vspace{0.5in} \textbf{\textit{Continuums}} {\bf  of positive solutions  for  classes of non-autonomous and non-local  problems with strong singular term}}

\author{
{\bf\large Carlos Alberto Santos$^{a,}$}\footnote{Carlos Alberto Santos acknowledges
the support of CAPES/Brazil Proc.  $N^o$ $2788/2015-02$.}\,\, ~~~~~~ {\bf\large Lais Santos$^b$}  ~~~~~~ {\bf\large Pawan Kumar Mishra$^{c,}$}\footnote{Pawan Mishra acknowledges
the support of CAPES/Brazil.}
\hspace{2mm}\\
{\it\small $^a$Universidade de Bras\'ilia, Departamento de Matem\'atica}\\
{\it\small   70910-900, Bras\'ilia - DF - Brazil}\\
{\it\small $^b$Departamento de Matem\'atica, Universidade de Vi\c cosa}\\
{\it\small   CEP 36570.000, Vi\c cosa - MG, Brazil}\\
{\it\small $^c$Universidade Federal da Para\'iba, Departamento de Matem\'atica, Cidade Unversit\'aria- Campus I}\\
{\it\small   58051-900, Jo\~ao Pessoa- PB - Brazil}\\
{\it\small e-mails: csantos@unb.br,
matmslais@gmail.com, pawanmishra31284@gmail.com }\vspace{1mm}\\
}

\date{}
\maketitle \vspace{-0.2cm}

\begin{abstract}
\mbox{}

In this paper, we show existence of \textit{continuums} of positive solutions  for  non-local quasilinear problems with strongly-singular reaction term  on a bounded domain in $\mathbb{R}^N$ with $N \geq 2$. We approached  non-autonomous and non-local equations by applying the Bifurcation Theory to the corresponding $\epsilon$-perturbed problems and using a comparison principle for $W_{\mathrm{loc}}^{1,p}(\Omega)$-sub and supersolutions to obtain qualitative properties of the $\epsilon$-\textit{continuum} limit. Moreover, this technique empowers us to study a strongly-singular and non-homogeneous Kirchhoff problem to get the existence of a \textit{continuum} of positive solutions.

\end{abstract}

\nd {\it \footnotesize 2010 Mathematics Subject Classifications:} {\scriptsize 35J25, 35J62, 35J75, 35J92}\\
\nd {\it \footnotesize Key words}: {\scriptsize Strongly-singular nonlinearities, Non-local Kirchhoff problems, Comparison Principle for $W_{\mathrm{loc}}^{1,p}(\Omega)$-sub and supersolutions, Bifurcation theory. }

\section{Introduction}
\def\theequation{1.\arabic{equation}}\makeatother
\setcounter{equation}{0}

In this paper, we study the existence, multiplicity and non-existence of positive $W_{\mathrm{loc}}^{1,p}(\Omega)$-solutions for the following non-autonomous and non-local $\lambda$-problem
$$
 (P)\left\{
\begin{array}{l}
-A\Big(x, \displaystyle\int_{\Omega}u^{\gamma}dx\Big)\Delta_pu = {\lambda }f(x,u)  ~ \mbox{in } \Omega,\\
    u>0 ~ \mbox{in }\Omega,~~
    u=0  ~ \mbox{on }\partial\Omega,
\end{array}
\right.
$$
where $ \Omega \subset \mathbb{R}^N ( N \geq 2)$ is a smooth bounded domain, $p \in (1, N), \ -\Delta_pu = -\mbox{div}(|\nabla u|^{p-2}\nabla u)$ is the $p-$Laplacian operator,   $\lambda >0$ is a real parameter, $A \in C(\overline{\Omega}\times [0,\infty), (0, \infty))$ and $f \in C(\overline{\Omega}\times(0, \infty), (0, \infty))$ can be strongly (very) singular at $u=0$.

The problem $(P)$ is non-local due to the presence of the term $A \Big(x, \int_{\Omega}u^{\gamma} \Big)$, which implies that the equation in $(P)$ is no longer a pointwise equality. Problems like $ (P) $ arise in various physical and biological models, for example, in the studies of particles in thermodynamical equilibrium via gravitational potential \cite{MR1380234}, 2-D turbulent behavior of real flow \cite{MR1145596}, physics of plasmas and population dynamics. In \cite{MR1603446}, it was investigated that the equation
\begin{equation}\label{1}
\frac{du}{dt} - A\Big(\int_{\Omega} u\Big)\Delta u = f
\end{equation}
 describes, for instance, the behavior of a population subject to some kind of spreading. In this case, $u$ and $A$ represent the population density and the diffusion coefficient, respectively. When $A$ is a constant, the above model does not take in to account that the phenomena of crowding and isolation can change the dynamics of the migration. Therefore, in a closer model  to  the reality, the coefficient $A$ is supposed to depend on the entire population in the domain $\Omega$ as in (\ref{1}).

 The literature about non-local problems with an autonomous and non-local term is vast (see, for example, \cite{MR3101144}, \cite{MR2409464}, \cite{MR2098510}, \cite{MR2012587} and \cite{MR1895897} ). Due to the lack of variational structure, non-local problems such as $(P)$ are treated, in general,  through  topological methods.  A recurrent argument in the treatment of autonomous non-local problems is to relate the non-local problem to a local problem and from that to study the behavior of the associated local problem. For example, in \cite{MR3101144} the authors have considered the following class of problems
 \begin{equation}\label{2}
 -\Big(\int_{\Omega}g(u)dx\Big)^r \Delta u = \lambda f(u) ~\mbox{in} ~ \Omega, ~u > 0 ~\mbox{in} ~\Omega  ~ \mbox{and} ~u = 0 ~\mbox{on} ~\partial \Omega,
 \end{equation}
 in which $u$  solves (\ref{2}) if and only if it is a positive solution to
 \begin{eqnarray}\label{3}
 - \Delta u = \alpha f(u) ~\mbox{in},  ~u = 0 ~\mbox{on} ~\partial \Omega,
\end{eqnarray}
with $	\alpha = \lambda \Big(\int_{\Omega}g(u)dx\Big)^{-r}. $
Therefore, results of existence and multiplicity to (\ref{2}) were proved through the study of $h_{\lambda}(\alpha, u) = \alpha - \lambda \Big(\int_{\Omega}g(u)dx\Big)^{-r},$  constrained to a \textit{continuum} of solutions of (\ref{3}).  This type of arguments, in general, can not be applied for non-autonomous and non-local problems. There are a few papers on the non-autonomous case, see \cite{MR2755414}, \cite{MR3745736}, \cite{MR3694626} and references therein. In particular, we mention \cite{MR3694626} where the problem $(P)$ is treated via bifurcation theory with $p = 2$ and $f(x,u) = u^{\beta}$,  for $ 0 < \beta < 1$.

In this work, we are concerned principally to $(P)$ with $f(x,t)$ being strongly singular at $t = 0$.{ Non-local singular problems have already been treated in the literature  when $f$ is weakly singular at $t=0$ (i.e. $f(x,t) \approx t^{\delta}$ with $-1<\delta <0$) or in the context of classical solutions, see \cite{MR3316619}, \cite{MR3665600} and references therein}. To our knowledge, \cite{NOSSO} was the first work to consider a non-local and strongly-singular quasilinear problems. However, with a monotonicity condition on $f (x, t)/ t^{p-1}$, a uniqueness result was shown there and as a consequence of this, the analysis of the behavior of the {\textit{continuum}}  was done by studying the {parameter-solution application}.

In this paper, since $A$ is a non-autonomous function and no monotonicity is posed on the quotient $f (x, t)/ t^{p-1}$, the same strategy can not be applied anymore.  In \cite{MR0427826}, Rabinowitz et. al. studied semilinear local singular problems in the context of classical solutions.  We inspire our approach on ideas from  them to obtain an unbounded $\epsilon$-limit connected component of positive solutions from $\epsilon$-unbounded {\textit{continuum}} of positive solutions for a $\epsilon$-perturbed problems. For qualitative properties about this \textit{continuum}, we are inspired on ideas from Figueiredo-Sousa et. al. \cite{MR3694626}, where a semilinear non-local problem was treated with non-singular (sublinear) growth. The same strategies of both above papers do not work in our approach,  principally by the lack of  the linearity of the $p$-Laplacian operator and {by the singularity in the Sobolev spaces setting}. To overcome these difficulties, we approached $(P)$ in an indirect way, since no functional equation can be directly associated to $(P)$, by  combining penalization arguments, a-priori estimates and a recent Comparison Principle for $W_{\mathrm{loc}}^{1,p}(\Omega)$-sub and supersolutions, proved by the two first authors of this paper (see Theorem 2.1 in \cite{NOSSO}).

Before stating the main results of this work, we need to clarify what we mean by Dirichlet boundary condition and solution to $(P)$. After the remarkable paper of Mackenna \cite{MR1037213}, we know that a solution of the problem $(P)$, with $p=2$, $A \equiv 1$ and $f(x,t) = t^{-\delta}$, lies in $H_0^{1}(\Omega)$ if and only if $0 < \delta < 3$. Therefore, for stronger singularities, we need a more general concept of zero-boundary conditions.

\begin{definition}\label{D1} We say that $u \leq 0$ on $\partial \Omega$ if $(u-\epsilon)^+ \in W_0^{1,p}(\Omega)$ for every $\epsilon > 0$ given. Furthermore, $u \geq 0$ if $-u \leq 0$ and $u = 0$ on $\partial \Omega$ if $u$ is a non-negative and non-positive function in $\partial \Omega$.
\end{definition}

In the following, we define a solution of the problem $(P)$.

\begin{definition}\label{D2} We say that $u$ is a $ W_{\mathrm{loc}}^{1,p}(\Omega)$-solution for $(P)$ if
 $u > 0$ in $\Omega$, that is, for each $\Theta \subset \subset \Omega$ given there exists a positive constant $c_\Theta$ such that $u \geq c_\Theta > 0$ in $\Theta$, $u^{\gamma} \in L^1(\Omega)$ and
\begin{equation}\label{4}
\displaystyle\int_{\Omega}|\nabla u|^{p-2}\nabla u \nabla \varphi dx = \lambda \displaystyle\int_{\Omega} \frac{f(x, u)}{A\Big(x, \int_{\Omega}u^{\gamma}dx\Big)}\varphi dx ~~\mbox{for all} ~ \varphi \in C_c^{\infty}(\Omega).
\end{equation}
\end{definition}

In what follows, we will always assume that $f \in C(\overline{\Omega}\times (0, \infty), (0, \infty))$. Let us set some hypotheses that we need in our first Theorem.
\begin{itemize}
\item[$(A_0)$] $A \in C(\overline{\Omega}\times \mathbb{R})$ satisfy $A(x,t) > 0 $ for all $ t \geq 0$ and $x \in \overline{\Omega}, $
\noindent \item[$(f_0)$] $\displaystyle\lim_{t \rightarrow 0^+}\frac{f(x,t)}{t^{p-1}} = \infty $ uniformly in $\overline{\Omega}$,
\noindent\item[$(f_\infty)$] $\displaystyle\lim_{t \rightarrow \infty }\frac{f(x,t)}{t^{p-1}} = 0$ uniformly in $\overline{\Omega}$.
\end{itemize}
Our first result can be stated as follows.
\begin{theorem}\label{T1} Suppose that $\gamma \geq 0$, $(A_0)$ and $(f_0)$ hold. Then, there exists an unbounded \textit{continuum} $\Sigma \subset \mathbb{R} \times C(\overline{\Omega})$ of positive solutions of $(P)$  that emanates from $(0,0)$.  In additional, if $(f_\infty)$ holds and $ A(x,t) \geq a_0  ~\mbox{in} ~\overline{\Omega}\times \mathbb{R}^+$ for some $a_0>0$, then $Proj_{\mathbb{R}}\Sigma = (0, \infty)$.
\end{theorem}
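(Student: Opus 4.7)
The plan is to regularize $f$ at the singularity and apply global bifurcation theory to a family of auxiliary problems. For each $\epsilon>0$ consider
$$(P_\epsilon)\quad -A\!\left(x,\int_\Omega u^\gamma\,dx\right)\Delta_p u=\lambda f(x,u+\epsilon)\ \ \text{in }\Omega,\ \ u>0\text{ in }\Omega,\ \ u=0\text{ on }\partial\Omega,$$
whose right-hand side is continuous on $\overline{\Omega}\times[0,\infty)$. I would define a compact solution operator $S_\epsilon:\mathbb{R}\times C(\overline{\Omega})\to C(\overline{\Omega})$ sending $(\lambda,u)$ to the unique solution $v$ of the frozen equation $-\Delta_p v=\lambda f(x,u+\epsilon)/A(x,\int_\Omega u^\gamma\,dx)$, $v=0$ on $\partial\Omega$ (well defined by $(A_0)$ and standard $p$-Laplacian theory), so that fixed points of $S_\epsilon(\lambda,\cdot)$ are exactly the positive solutions of $(P_\epsilon)$. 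Assumption $(f_0)$ forces $\lambda\to 0$ whenever $\|u\|_\infty\to 0$ along a sequence of positive solutions, hence $(0,0)$ is a bifurcation point from the trivial branch and Rabinowitz's global bifurcation theorem yields an unbounded connected component $\Sigma_\epsilon\subset[0,\infty)\times C(\overline{\Omega})$ of positive solutions of $(P_\epsilon)$ emanating from $(0,0)$.

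Next I would pass to the limit $\epsilon\to 0^+$. On any bounded interval $[0,\Lambda]$ I would derive $\epsilon$-uniform a priori estimates on pairs $(\lambda,u_\epsilon)\in\Sigma_\epsilon$ with $\lambda\le\Lambda$ in two steps: (i) a pointwise lower bound $u_\epsilon\ge c_\Theta>0$ on every $\Theta\subset\subset\Omega$ independent of $\epsilon$, obtained by constructing an $\epsilon$-independent subsolution (from the first eigenfunction of $-\Delta_p$ or from distance-to-boundary barriers) and applying the comparison principle for $W_{\mathrm{loc}}^{1,p}$-sub and supersolutions (Theorem 2.1 of \cite{NOSSO}) together with $(f_0)$; and (ii) a local $C^{1,\alpha}(\Theta)$-bound from $p$-Laplacian regularity once the right-hand side is controlled on $\Theta$ by (i). A diagonal extraction along $\Theta_n\nearrow\Omega$ and $\epsilon_n\to 0$ then yields pairs $(\lambda,u)$ in $\mathbb{R}\times C(\overline{\Omega})$ satisfying $(P)$ in the sense of Definition \ref{D2}, and a Whyburn-type Kuratowski-limit argument transfers connectedness and unboundedness from the $\Sigma_{\epsilon_n}$ to an unbounded connected set $\Sigma\subset\mathbb{R}\times C(\overline{\Omega})$ containing $(0,0)$.

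For the second assertion, assume $(f_\infty)$ and $A\ge a_0$. The projection $Proj_\mathbb{R}\Sigma$ is a connected subset of $[0,\infty)$ containing $0$, so it suffices to show that $\Sigma\cap([0,\lambda^*]\times C(\overline{\Omega}))$ is bounded in $C(\overline{\Omega})$ for every $\lambda^*>0$. Under $(f_\infty)$, for any prescribed $\mu>0$ there exists $C_\mu>0$ with $f(x,t)\le\mu t^{p-1}+C_\mu$ for all $t>0$; combined with $A\ge a_0$ any solution of $(P)$ with $\lambda\le\lambda^*$ satisfies $-\Delta_p u\le(\lambda^*/a_0)(\mu u^{p-1}+C_\mu)$ weakly. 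Choosing $\mu$ small relative to $a_0\lambda_1/\lambda^*$ (with $\lambda_1$ the first eigenvalue of $-\Delta_p$ on $\Omega$) and comparing $u$ to a bounded supersolution of this sublinear inequality rules out blow-up of $\|u\|_\infty$ at finite $\lambda$. Hence unboundedness of $\Sigma$ forces $Proj_\mathbb{R}\Sigma$ to reach arbitrarily large values, and connectedness gives $Proj_\mathbb{R}\Sigma=(0,\infty)$ (the value $0$ being excluded since $f>0$ forbids a solution at $\lambda=0$).

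The main difficulty will be step (i) and the passage to the limit $\epsilon\to 0$: solutions of $(P)$ need not lie in $W_0^{1,p}(\Omega)$, so one cannot appeal to Sobolev compactness, and the non-locality of $A(x,\int u^\gamma\,dx)$ couples the behavior of $u$ on all of $\Omega$ back into the coefficient of the equation. Verifying that the limit $u$ satisfies $u^\gamma\in L^1(\Omega)$ and the boundary condition of Definition \ref{D1}, and that $\Sigma$ is genuinely connected, depends crucially on the construction of sharp $\epsilon$-uniform barriers near $\partial\Omega$ and on the $W_{\mathrm{loc}}^{1,p}$-comparison principle of \cite{NOSSO}.
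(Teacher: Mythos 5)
Your overall strategy coincides with the paper's: regularize by shifting the argument of $f$, apply Rabinowitz's global bifurcation theorem to $(P_\epsilon)$, establish $\epsilon$-uniform two-sided barriers via the $W^{1,p}_{\mathrm{loc}}(\Omega)$-comparison principle of \cite{NOSSO}, and pass to the limit with a Whyburn-type topological argument to extract an unbounded connected set $\Sigma$.

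There is, however, a genuine error in your argument for $Proj_{\mathbb{R}}\Sigma=(0,\infty)$. You claim that $(f_\infty)$ gives, for each $\mu>0$, a constant $C_\mu$ with $f(x,t)\le\mu t^{p-1}+C_\mu$ for all $t>0$. This inequality cannot hold here: by $(f_0)$ the nonlinearity $f$ is strongly singular at $t=0$, so $f(x,t)\to\infty$ as $t\to0^+$ while the right-hand side stays bounded near $t=0$. Any global affine-plus-power bound of this form is therefore false, and the ``bounded supersolution'' you compare with does not dominate $u$ in the region where $u$ is small (near $\partial\Omega$). The paper avoids this by building the upper barrier as $k+\lambda^{1/(p-1)}\mathcal{K}_2(k,K)^{1/(p-1)}e$, where $\mathcal{K}_2(k,K)=\max\{f(x,t)/a_K:\,x\in\overline{\Omega},\,k\le t\le K+1\}$ is a supremum taken \emph{away} from the singularity, and performing the comparison only on the set $\mathcal{O}_k=\{u_\epsilon>k\}$; on $\partial\mathcal{O}_k$ the barrier already dominates trivially. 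With this $k$-shifted supersolution, the contradiction argument for horizontal unboundedness (using $(f_\infty)$ to control $\mathcal{K}_2(1,R_l)$ for large $R_l$, together with $A\ge a_0$) goes through. Your plan also does not explicitly establish the lower bound $\lambda_\epsilon\ge C_*(U)>0$ for solutions on $\partial U$; this is needed so that the $\epsilon\to0$ limit of points on $\partial U$ cannot collapse to $(0,0)$, and the paper obtains it as a byproduct of the upper barrier before constructing the Picone-based lower barrier $\lambda_\epsilon^{1/(p-1)}\mathcal{K}_1\phi_1$.
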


Below, we present more   qualitative information about the \textit{continuum} $\Sigma $ by relating the non-local and {nonlinear} terms. In this case, we need to consider certain additional conditions:

\vspace{0.2cm}
\noindent $(A_\infty)$~ $\displaystyle\lim_{t \rightarrow \infty} A(x,t)t^{\theta} = a_\infty(x) \geq 0 ~\mbox{uniformly in } ~ \overline{\Omega}, ~ \mbox{for some} ~ a_\infty \in C(\overline{\Omega})$, 
\vspace{0.2cm}

\noindent $(A'_\infty)$~ $\displaystyle\lim_{t \rightarrow \infty} A(x,t)t^{\theta} = \infty  ~\mbox{uniformly in } ~ \overline{\Omega},$ 
\vspace{0.2cm}

\noindent $(f_1)$ $\displaystyle\lim_{t \rightarrow \infty}\frac{f(x,t)}{t^{\beta}} = c_\infty(x) > 0 $ uniformly in $\overline{\Omega}$, for some $ -\infty < \beta < p-1$ and $c_\infty \in C(\overline{\Omega})$, 
\vspace{0.2cm}

\noindent $(f_2)$ $\displaystyle\lim_{t \rightarrow 0^+}\frac{f(x,t)}{t^{\delta}} = c_0(x) > 0 $ uniformly in $\overline{\Omega}$, for some $ -\infty < \delta < p-1$ and $c_0 \in C(\overline{\Omega}). $

 \begin{theorem}\label{T2}
Assume $(A_0)$ and that $f$ satisfies $(f_1)$ and $(f_2)$, with $ \delta \leq \beta $. If
\begin{itemize}
\item[a)] $\gamma > 0$ and either $\{$$\theta{\gamma} = p-1-\beta$ and $ (A'_\infty)$$\}$ or $\{$$\theta{\gamma} <p-1-\beta$ and $ (A_\infty)$ with $a_\infty>0$ in $\overline{\Omega}$$\}$ hold, then $Proj_{\mathbb{R}}\Sigma = (0, \infty)$ (see Fig. 1),
\item[b)] $\gamma > 0$, $\theta{\gamma} \geq {p-1-\beta}$ and $(A_\infty)$ hold, then $Proj_{\mathbb{R}}\Sigma \subset (0, \lambda^*)$ for some $0 < \lambda^* < \infty. $ Besides this, if
\begin{itemize}
\item[i)]   $\theta{\gamma} = {p-1-\beta}$ and  $a_\infty > 0 $ in $\overline{\Omega}$, then $\lambda = 0$ can not be a bifurcation point from $\infty$ (see Fig. 2 or 3);
\item[ii)]  $a_\infty = 0$ in $\overline{\Omega}$, then $\lambda = 0$ is a bifurcation point from $\infty$ (see Fig. 4);
\end{itemize}
\item[c)] $-1 < \gamma < 0$, $\theta{\gamma} \geq {p-1-  \delta} $ and either $(A'_\infty)$ or $(A_\infty)$  with $0 < a_\infty $ hold, then $(P)$  does not admit positive solution for $\lambda > 0$ small.
\end{itemize}
 \end{theorem}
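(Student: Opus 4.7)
My strategy is to analyze the continuum $\Sigma$ supplied by Theorem~\ref{T1} by tracking $\lambda$ along $\Sigma$ via a rescaling as $\|u\|_\infty \to \infty$, combined with the comparison principle of \cite{NOSSO} to pass to the limit. Given $(\lambda_n,u_n)\in\Sigma$ with $t_n:=\|u_n\|_\infty\to\infty$, set $v_n:=u_n/t_n$, so $\|v_n\|_\infty=1$ and $M_n:=\int_\Omega u_n^\gamma\,dx=t_n^{\gamma}\int_\Omega v_n^\gamma\,dx$. Define $\kappa_n(x):=A(x,M_n)M_n^\theta$, which tends to $a_\infty(x)$ uniformly under $(A_\infty)$ and to $\infty$ uniformly under $(A'_\infty)$. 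Using $(f_1)$ and dividing $(P)$ by $t_n^{p-1}$ one finds
\[
-\Delta_p v_n \;=\; \Lambda_n\,\frac{c_\infty(x)}{\kappa_n(x)}\Bigl(\textstyle\int_\Omega v_n^\gamma\,dx\Bigr)^{\theta} v_n^{\beta}(1+o(1)),\qquad \Lambda_n:=\lambda_n\,t_n^{\beta+\gamma\theta-(p-1)},
\]
so the existence of a nondegenerate limit of $v_n$ dictates $\lambda_n\sim\kappa_n\,t_n^{p-1-\beta-\gamma\theta}$ up to bounded positive factors.

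Parts (a) and (b) then follow directly. In (a), either $\theta\gamma=p-1-\beta$ with $(A'_\infty)$ gives $\lambda_n\sim\kappa_n\to\infty$, or $\theta\gamma<p-1-\beta$ with $(A_\infty)$ and $a_\infty>0$ gives $\lambda_n\sim a_\infty\,t_n^{p-1-\beta-\gamma\theta}\to\infty$; either way, by connectedness and unboundedness of $\Sigma$ emanating from $(0,0)$, $\mathrm{Proj}_{\mathbb{R}}\Sigma=(0,\infty)$. In (b), with $\theta\gamma\geq p-1-\beta$ and $(A_\infty)$, $\lambda_n$ stays bounded, so $\mathrm{Proj}_{\mathbb{R}}\Sigma\subset(0,\lambda^*)$ for some finite $\lambda^*$. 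For (b)(i), when $\theta\gamma=p-1-\beta$ and $a_\infty>0$, the rescaled equation is nondegenerate: interior $C^{1,\alpha}_{\mathrm{loc}}$-regularity together with the comparison principle of \cite{NOSSO} yield $v_n\to v$ in $C^1_{\mathrm{loc}}(\Omega)$ along a subsequence, with $v>0$, $\|v\|_\infty=1$, solving $-\Delta_p v=\Lambda_\infty a_\infty^{-1}c_\infty(\int v^\gamma)^{\theta}v^{\beta}$; then $\Lambda_\infty=\lim\lambda_n$ is strictly positive and $\lambda=0$ cannot be a bifurcation from infinity. For (b)(ii), $\kappa_n\to 0$ and the relation $\lambda_n\sim\kappa_n$ (since the $t_n^{p-1-\beta-\gamma\theta}$ factor is bounded thanks to $\theta\gamma\geq p-1-\beta$) forces $\lambda_n\to 0$, establishing bifurcation from infinity at $\lambda=0$.

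Part (c) calls for a direct non-existence argument, since Theorem~\ref{T1} assumes $\gamma\geq 0$. Given any positive $W^{1,p}_{\mathrm{loc}}$-solution $u$ of $(P)$ with $\gamma\in(-1,0)$, the moment $M:=\int_\Omega u^\gamma\,dx$ is bounded below by a negative power of $\|u\|_\infty$, so under $(A_\infty)$ with $a_\infty>0$ or under $(A'_\infty)$ one has $1/A(x,M)\gtrsim\|u\|_\infty^{\gamma\theta}$. Testing the equation with $u$ and using $(f_2)$ near $u=0$, Hölder's inequality combined with $\gamma\theta\geq p-1-\delta$ yields a uniform estimate $\lambda\geq\lambda_0>0$ for any positive solution, so no solution exists for $\lambda<\lambda_0$. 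The principal technical obstacle is the rigorous passage to the limit in the rescaling step---uniform $C^{1,\alpha}_{\mathrm{loc}}$-bounds on $v_n$, non-degeneracy of the limit (so that $\int v^\gamma>0$), and convergence of $\int v_n^\gamma$---which rests on interior $p$-Laplacian regularity together with lower barriers supplied by the $W^{1,p}_{\mathrm{loc}}$-comparison principle of \cite{NOSSO}; one must also handle the possible dependence of $a_\infty$ and $\kappa_n$ on $x$ carefully so that the exponent identities transfer uniformly to the nonlocal term.
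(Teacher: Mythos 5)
Your blow-up rescaling $v_n := u_n/\|u_n\|_\infty$ is a genuinely different route from the paper's. The paper never passes to a rescaled limit; instead its Lemma~3.3 and Corollary~3.1 give explicit two-sided power-law bounds $\alpha^\tau m_1\phi_1 \leq u \leq \alpha^\tau m_2 e^t$ for any $W^{1,p}_{\mathrm{loc}}$-solution of $-\Delta_p u = \alpha f(x,u)$ (with $\tau$ determined by whether $\alpha$ is small or large), obtained by comparing $u$ against explicit sub/supersolutions via the $W^{1,p}_{\mathrm{loc}}$-comparison principle of \cite{NOSSO}. These bounds are then applied to $(P)$ with $\alpha = \lambda_n (\int_\Omega u_n^\gamma\,dx)^\theta$ (up to bounded factors coming from $(A_\infty)$ or $(A'_\infty)$), and the $\gamma$-th moment of the resulting inequality yields the exponent identities directly, with no compactness argument needed.

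The gap in your proposal is the passage to a nondegenerate blow-up limit, which you flag as ``the principal technical obstacle'' but do not resolve --- and it is not merely a matter of filling in details. The $(1+o(1))$ expansion via $(f_1)$ is valid only where $u_n = t_n v_n$ stays bounded away from zero, i.e.\ on compact subsets of $\Omega$. Near $\partial\Omega$, $(f_2)$ governs, and the rescaled right-hand side behaves like $\lambda_n c_0(x)\,v_n^\delta\, t_n^{\delta-(p-1)}/A$: here $t_n^{\delta-(p-1)} \to 0$ while $v_n^\delta$ blows up as $v_n \to 0$, and you give no control over this indeterminate product. Likewise, for $\gamma > 0$ the nondegeneracy $\liminf \int_\Omega v_n^\gamma > 0$ requires ruling out concentration near the boundary, which needs boundary barriers you mention but do not construct. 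Finally, your part (c) sketch has a sign problem: since $\theta < 0$ (from $\theta\gamma \geq p-1-\delta > 0$ and $\gamma < 0$), the hypotheses give $A(x,M) \gtrsim M^{-\theta}$ and, combined with $M \gtrsim \|u\|_\infty^\gamma$, only $1/A \lesssim \|u\|_\infty^{\gamma\theta}$ --- the reverse of what you assert --- and ``testing with $u$ plus H\"older'' does not visibly produce a lower bound on $\lambda$. The paper instead shows $\int_\Omega u_n^\gamma \to \infty$, derives the supersolution bound $u_n \leq m_2\lambda_n^\tau(\int_\Omega u_n^\gamma)^{\theta\tau}e^t$ with $\tau = (p-1-\delta)^{-1}$, and takes the $\gamma$-moment to obtain $C \geq (\int_\Omega u_n^\gamma)^{1-\tau\theta\gamma} \geq c\,\lambda_n^{\tau\gamma} \to \infty$ (using $\tau\theta\gamma \geq 1$ and $\tau\gamma < 0$), a contradiction.
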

 Summarizing the above information, we have the following diagrams.
 \newpage
\begin{figure}
	\centering
	\begin{tikzpicture}[scale=.55]
	\draw[thick, ->] (-1, 0) -- (8, 0);
	\draw[thick, ->] (0, -1) -- (0, 7);
    \draw[thick] (0, 0) .. controls (1, 3) and (4, 0) ..(7,6.5);
	\draw (8,0) node[below]{$\lambda$};
	\draw (0, 0) node[below left]{$0$};
	\draw (0,7) node[left]{$\|u\|_\infty$};
   \draw (3,-1) node[below]{\textrm{Fig. 1 Theorem 1.2 a)}};
    \end{tikzpicture}
    \hspace{0.8cm}
    \begin{tikzpicture}[scale=0.55]
	\draw[thick, ->] (-1, 0) -- (8, 0);
	\draw[thick, ->] (0, -1) -- (0, 7);
    \draw[thick] (0, 0) .. controls (6, 2) and (0.5, 3) ..(1,7);
	\draw[thick, dotted] (0.8,0) -- (0.8,7);
	\draw (8,0) node[below]{$\lambda$};
	\draw (0, 0) node[below left]{$0$};
	\draw (0,7) node[left]{$\|u\|_\infty$};
\draw (3,-1) node[below]{\textrm{Fig. 2 Theorem 1.2 b-i)}};
	\end{tikzpicture}
    \hspace{0.8cm}
    \begin{tikzpicture}[scale=0.55]
\draw[thick, ->] (-1, 0) -- (8, 0);
	\draw[thick, ->] (0, -1) -- (0, 7);
	\draw[thick] (0 ,0) .. controls (0.8, 1.9) and (2.7,0.5) .. (2.9, 7) ;
	\draw[thick, dotted] (3,0) -- (3, 7.0);
	\draw (8,0) node[below]{$\lambda$};
	\draw (0, 0) node[below left]{$0$};
	\draw (0,7) node[left]{$\|u\|_\infty$};
\draw (3,-1) node[below]{\textrm{Fig. 3 Theorem 1.2 b-i)}};
	\end{tikzpicture}	
	\hspace{0.8cm}
	\begin{tikzpicture}[scale=0.55]
   \draw[thick, ->] (-1, 0) -- (8, 0);
	\draw[thick, ->] (0, -1) -- (0, 7);
	\draw[thick] (0.0 ,0.0) .. controls (5.0,3.0) and (0.2,3) .. (0.2, 6.8);
	\draw (8,0) node[below]{$\lambda$};
	\draw (0, 0) node[below left]{$0$};
	\draw (0,7) node[left]{$\|u\|_\infty$};
   \draw (3,-1) node[below]{\textrm{Fig. 4 Theorem 1.2 b-ii)}};
    \end{tikzpicture}
\end{figure}

 In the above item $(c)$, we stated that the problem $(P)$ has no solution {for} $ \lambda>0 $ close to $0$ when the non-local term is also singular. We note that the issue about existence of solution is not possible to treat no longer with the same arguments as used in the proof of Theorem \ref{T1}. However, when the non-local term is autonomous, we are also able to prove the global existence of $ W_{\mathrm{loc}}^{1,p}(\Omega) \cap C(\overline{\Omega})$-solutions.

 More precisely, we have the following result.

\begin{theorem}\label{T4}  Assume that $(f_1)$, $(f_2)$ with $\delta \leq \beta$, $(A_0)$ and either $(A_\infty)$  with $ a_\infty >0$ or $(A'_\infty)$  hold. If $\gamma\theta > p-1-\delta$ and $-1 < \gamma < 0$, then there exists a $ \lambda^*>0 $ such that  the problem
\begin{equation}\label{autonomo}
 \left\{
\begin{array}{l}
-A\Big(\displaystyle\int_{\Omega}u^{\gamma}dx\Big)\Delta_pu = {\lambda }f(x,u)  ~ \mbox{in } \Omega,\\
    u>0 ~ \mbox{in }\Omega,~~
    u=0  ~ \mbox{on }\partial\Omega,
\end{array}
\right.
\end{equation}
 admits at least one $W^{1,p}_{\mathrm{loc}}(\Omega)\cap C(\overline{\Omega})$-solution for $\lambda \geq \lambda^*$ and no solution for $\lambda <\lambda^*$.
\end{theorem}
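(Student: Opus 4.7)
The plan is to exploit the autonomous character of $A$ to reduce (\ref{autonomo}) to a one-parameter family of local $p$-Laplacian problems, then analyse the nonlocal parameter $\lambda$ as a function of the local parameter $\alpha$ along the continuum produced by Theorem~\ref{T1}. Indeed $u$ solves (\ref{autonomo}) at parameter $\lambda$ if and only if $u$ solves
$$-\Delta_p u = \alpha f(x,u) \ \text{in } \Omega, \quad u=0 \ \text{on } \partial\Omega, \quad u>0 \ \text{in } \Omega,$$
with $\alpha = \lambda / A(\int_\Omega u^\gamma\,dx)$. Applying Theorem~\ref{T1} to this local problem (take $A\equiv 1$, so $\gamma$ is irrelevant, $(A_0)$ holds trivially, and $(f_1),(f_2)$ with $\delta,\beta<p-1$ supply $(f_0),(f_\infty)$) produces an unbounded continuum $\Sigma_0 \subset \mathbb{R}\times C(\overline{\Omega})$ of positive solutions $(\alpha,v)$ with $\mathrm{Proj}_{\mathbb{R}}\Sigma_0 = (0,\infty)$. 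On $\Sigma_0$ I introduce the continuous functional
$$\Lambda(\alpha,v) := \alpha\, A\bigl(\int_\Omega v^\gamma\,dx\bigr),$$
so that existence of a solution of (\ref{autonomo}) at $\lambda$ is equivalent to $\lambda \in \Lambda(\Sigma_0)$.

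The crux will be to show $\Lambda(\alpha,v)\to\infty$ at both endpoints of $\Sigma_0$. At the small-$\alpha$ end, sub/super-solution barriers built from the model problem $-\Delta_p w = w^\delta$ and the comparison principle of \cite{NOSSO} give uniform two-sided estimates $\|v\|_\infty \asymp \alpha^{1/(p-1-\delta)}$ together with the boundary decay $v(x) \asymp \alpha^{1/(p-1-\delta)} d(x,\partial\Omega)^{p/(p-1-\delta)}$, so $\int_\Omega v^\gamma\,dx \asymp \alpha^{\gamma/(p-1-\delta)} \to \infty$. Since $\gamma<0$ together with $\gamma\theta > p-1-\delta > 0$ forces $\theta<0$, hypothesis $(A_\infty)$ or $(A'_\infty)$ gives $A(t)\asymp t^{-\theta}$ at infinity, producing
$$\Lambda(\alpha,v) \asymp \alpha^{\,1-\gamma\theta/(p-1-\delta)} \to \infty,$$
with exponent strictly negative by the standing assumption $\gamma\theta>p-1-\delta$. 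An analogous analysis at the large-$\alpha$ end, now based on $(f_1)$, gives $\|v\|_\infty\asymp\alpha^{1/(p-1-\beta)}\to\infty$ with Hopf-type boundary behaviour $v\asymp\alpha^{1/(p-1-\beta)}d(x,\partial\Omega)$, so $\int v^\gamma \asymp \alpha^{\gamma/(p-1-\beta)}\to 0$ (integrability at $\partial\Omega$ uses $\gamma>-1$), whence $A(\int v^\gamma)\to A(0)>0$ by $(A_0)$ and again $\Lambda(\alpha,v)\to\infty$.

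Connectedness of $\Sigma_0$ together with continuity and properness of $\Lambda$ then forces a minimum $\lambda^*>0$ with $\Lambda(\Sigma_0)=[\lambda^*,\infty)$, delivering existence for every $\lambda \geq \lambda^*$. For non-existence when $\lambda<\lambda^*$, I note first that the two-sided scaling estimates above are universal—they apply to any positive solution of (\ref{autonomo}), not only those on $\Sigma_0$—so the same calculation yields the same lower bound $\lambda^*$ on the admissible parameters; this is complemented by a sub/super-solution continuation showing that $\{\lambda>0:\text{a solution exists}\}$ is upward-closed (a solution at $\lambda_0$ is a sub-solution for every $\lambda>\lambda_0$, while a super-solution is supplied by a local-problem solution $w$ at a sufficiently large parameter $K$, since $K\,A(\int_\Omega w^\gamma\,dx)\to\infty$ thanks to $A(0)>0$ and $\int w^\gamma\to 0$). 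The main obstacle will be promoting the scaling heuristics $\|v\|_\infty\asymp\alpha^{1/(p-1-\delta)}$ and the accompanying boundary asymptotics to rigorous uniform bounds valid for all positive solutions in the relevant regimes, by constructing explicit barriers compatible with the strong singularity of $f$ and applying the \cite{NOSSO} comparison principle; once these are in hand, the remainder is a routine intermediate-value-plus-continuation argument.
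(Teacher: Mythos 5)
Your proposal follows the paper's proof essentially step by step: reduce to the local problem, invoke Theorem~\ref{T1} (with $A\equiv 1$) to get the continuum $\Sigma_0$, introduce the map $\Psi(\alpha,u)=\alpha A(\int_\Omega u^\gamma dx)$ on $\Sigma_0$, use the two-sided scaling estimates of Lemma~\ref{L3} together with the hypotheses on $A$ and $\gamma\theta>p-1-\delta$ to show $\Psi\to\infty$ at both ends of $\Sigma_0$, then extract $\lambda^*=\inf_{\Sigma_0}\Psi>0$ and conclude by a Bolzano-type argument plus compactness at $\lambda=\lambda^*$; this is exactly the Proposition the paper proves and then verifies. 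The only differences are cosmetic and nonessential: your claimed boundary asymptotic $v\asymp\alpha^{1/(p-1-\delta)}d^{p/(p-1-\delta)}$ is not the bound the paper actually establishes (Lemma~\ref{L3} gives $\phi_1\lesssim v\lesssim e^{\min\{1,(p-1)/(p-1-\delta)\}}$, and the two-sided exponent $p/(p-1-\delta)$ is only correct when $\delta<-1$), but the resulting scaling $\int_\Omega v^\gamma\asymp\alpha^{\gamma/(p-1-\delta)}$ used in the limit calculation is still the right one; and the sub/super-solution continuation you append for upward-closedness is not used in the paper (the Bolzano step already yields existence for every $\lambda\ge\lambda^*$) and would in any case require extra care because the nonlocal coefficient destroys the usual monotone iteration.
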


By taking advantage on the ideas explored in the proofs of the above Theorems, we  were able to consider non-autonomous Kirchhoff-type problems as well.  For sake of the clarity, let us consider just a classical Kirchhoff model.
Precisely, we consider
$$
 (Q)~~\left\{
\begin{array}{l}
-M\Big(x, \|\nabla u\|_p^p)\Delta_pu = {\lambda }f(x,u)  ~ \mbox{in } \Omega,\\
    u>0 ~ \mbox{in }\Omega,~~
    u=0  ~ \mbox{on }\partial\Omega,
\end{array}
\right.
$$
where $M$, modeled as non-homogeneous Kirchhoff term, satisfies:
\begin{itemize}
\item[($M_0$)] $M(x,t) = a(x) + b(x)t^{\gamma}$, $a, b \in C(\overline{\Omega}),  a(x)   \geq \underline{a}$ and  $b(x) \geq 0$ in $\overline{\Omega}$
\end{itemize}
and $\gamma>0$ satisfies:
\begin{itemize}
\item[($\Gamma_0$)] $
\gamma > 0 ~\mbox{if} ~ -1 \leq \delta < p-1 ~\mbox{and} ~
0 < \gamma < \frac{p-1-\delta}{-\delta -1} ~\mbox{if} ~ -\frac{2p-1}{p-1} \leq \delta < -1.
$
\end{itemize}

\begin{theorem}\label{T5}
Assume that  $(f_2)$, $(M_0)$ and $(\Gamma_0)$ hold. Then there exists an unbounded \textit{continuum} $\Sigma \subset  \mathbb{R}^+ \times C(\overline{\Omega})$ of solutions of $(Q)$ which emanates from $(0,0)$. Besides this, if $(f_\infty)$
holds then $Proj_{\mathbb{R}^+}\Sigma = (0, \infty)$. Moreover, if $\gamma < 1$ then $\Sigma$ is unbounded vertically as well.
\end{theorem}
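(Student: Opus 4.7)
The plan is to extend the $\epsilon$-perturbation and comparison-principle scheme used for Theorem \ref{T1} to the Kirchhoff non-locality $M(x,\|\nabla u\|_p^p)$. First I would introduce the regularized family
$$
(Q_\epsilon)\quad -M(x,\|\nabla u\|_p^p)\Delta_p u = \lambda f(x, u+\epsilon) \text{ in }\Omega,\quad u=0 \text{ on }\partial\Omega,
$$
for $\epsilon > 0$, where the shift $u+\epsilon$ removes the singularity of $f$ at the origin. Since $(M_0)$ gives $M\geq \underline{a}>0$, the fixed-point operator $T_\lambda^\epsilon(u) = (-\Delta_p)^{-1}\bigl(\lambda f(x,u+\epsilon)/M(x,\|\nabla u\|_p^p)\bigr)$ is completely continuous on $C(\overline{\Omega})$, and Rabinowitz's global bifurcation theorem yields, for each $\epsilon$, an unbounded continuum $\Sigma_\epsilon\subset\mathbb{R}^+\times C(\overline{\Omega})$ of positive solutions of $(Q_\epsilon)$ emanating from $(0,0)$.

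The second step is the passage $\epsilon\to 0^+$. Using $(f_2)$ and $(\Gamma_0)$, a sub-solution of the form $\underline{u}_\lambda = c_\lambda \phi_1^\alpha$ with $\phi_1$ the first $-\Delta_p$ eigenfunction and $(\alpha, c_\lambda)$ chosen according to whether $\delta\in [-1,p-1)$ or $\delta\in [-(2p-1)/(p-1),-1)$ can be built; in each regime condition $(\Gamma_0)$ is precisely the range of $\gamma$ that guarantees $\underline{u}_\lambda^\gamma \in L^1(\Omega)$ and the sub-solution inequality in the sense of Definition \ref{D2}. The comparison principle of \cite{NOSSO} then gives $u_\epsilon \geq \underline{u}_\lambda$ on every compact subset of $\Omega$, uniformly in $\epsilon$. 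Combined with $\epsilon$-uniform $W^{1,p}(\Omega)$-bounds (obtained by testing $(Q_\epsilon)$ with $u_\epsilon$ and using $M\geq \underline{a}$), a Whyburn-type connectedness argument as in Lemma 2.3 of \cite{MR3694626} extracts an unbounded continuum $\Sigma$ of $W^{1,p}_{\mathrm{loc}}(\Omega)$-solutions of $(Q)$ emanating from $(0,0)$; dominated convergence applied to the global quantity $\|\nabla u_\epsilon\|_p^p$ recovers the non-local coefficient $M(x,\|\nabla u\|_p^p)$ in the limit.

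For $\mathrm{Proj}_{\mathbb{R}^+}\Sigma = (0,\infty)$ under $(f_\infty)$, I would argue by contradiction: if the projection were contained in $(0,\lambda^*)$, unboundedness of $\Sigma$ forces a sequence $(\lambda_n,u_n)\in\Sigma$ with $\lambda_n \leq \lambda^*$ and $\|u_n\|_\infty \to \infty$. Testing $(Q)$ with $u_n$, using $M\geq\underline{a}$ and the bound $f(x,t) \leq \eta t^{p-1} + C_\eta$ provided by $(f_\infty)$, together with Poincar\'e and Sobolev inequalities, yields $\|u_n\|_\infty \leq C(\lambda^*)$ for $\eta$ small, a contradiction. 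For the vertical unboundedness when $\gamma<1$, the idea is to exploit the upper bound $M(x,t)\leq \|a\|_\infty+\|b\|_\infty t^\gamma$ on top of the same testing identity, producing an algebraic relation in which the left-hand exponent $p(1+\gamma)$ in $\|\nabla u\|_p$ is dominated, after the Sobolev embedding, by the right-hand growth controlled by $\lambda\|u\|_\infty^{p-1}$ precisely when $\gamma<1$; hence $\|u\|_\infty$ is forced to take arbitrarily large values on $\Sigma$, whereas for $\gamma\geq 1$ the coefficient is strong enough to keep $\|u\|_\infty$ tied to $\lambda$.

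The principal obstacle will be the $\epsilon\to 0^+$ passage, since the Kirchhoff coefficient couples the global gradient norm to the singular right-hand side: any failure of $L^p$-strong convergence of $\nabla u_\epsilon$ on the whole of $\Omega$ would corrupt $M(x,\|\nabla u_\epsilon\|_p^p)$ in the limit and break the weak formulation. Controlling this requires an $\epsilon$-uniform $L^p(\Omega)$-bound on $\nabla u_\epsilon$ together with a careful use of $(\Gamma_0)$ to keep the test integral $\int_\Omega f(x, u_\epsilon)u_\epsilon\,dx$ finite in the strongly-singular regime $\delta<-1$, where $f$ is not integrable at the boundary without a lower bound on $u_\epsilon$ of Hopf type.
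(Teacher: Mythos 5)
Your overall scheme---perturb to $(Q_\epsilon)$, obtain an $\epsilon$-continuum by bifurcation, prove uniform estimates, and pass to the limit---matches the paper's, but there are three concrete gaps where the written proposal would not go through.

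First, the operator $T_\lambda^\epsilon(u)=(-\Delta_p)^{-1}\bigl(\lambda f(x,u+\epsilon)/M(x,\|\nabla u\|_p^p)\bigr)$ is not well defined on $C(\overline{\Omega})$ (no gradient norm), and even on $W_0^{1,p}(\Omega)\cap C(\overline{\Omega})$ it is not the compact perturbation of the identity that Rabinowitz's theorem requires, because $\|\nabla u\|_p$ is not continuous with respect to $C(\overline{\Omega})$-convergence. The paper resolves this by freezing the reaction term at $v$ and solving, for each $(\lambda,v)$, the self-consistent problem $-M(x,\|\nabla u\|_p^p)\Delta_p u=\lambda f(x,|v|+\epsilon)$: introducing the one-parameter family $-M(x,R)\Delta_p u=\lambda f(x,|v|+\epsilon)$ and showing that $h(R)=\|\nabla u_R\|_p^p$ is continuous, positive at $0$ and non-increasing, hence has a unique fixed point. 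This uniqueness and monotonicity step is essential to get a single-valued compact map on $\mathbb{R}^+\times C(\overline{\Omega})$; your proposal skips it entirely.

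Second, you misidentify the role of $(\Gamma_0)$. In the Kirchhoff problem the non-local argument is $\|\nabla u\|_p^p$, not $\int_\Omega u^\gamma\,dx$, so $\underline{u}_\lambda^\gamma\in L^1(\Omega)$ is automatic ($\gamma>0$) and irrelevant. The actual role of the upper bound $\gamma<\frac{p-1-\delta}{-\delta-1}$ in the strongly singular regime is to make the algebraic inequality coercive: testing $(Q_\epsilon)$ with $u_\epsilon$ gives $\|\nabla u_\epsilon\|_p^p\leq C\lambda_\epsilon\bigl(\int_\Omega(u_\epsilon+\epsilon)^{\delta+1}dx+1\bigr)$, and a Hopf-type subsolution $\underline{u}=s\phi_1^{p/(p-1-\delta)}$ built \emph{for the $\epsilon$-problem itself} (with $s$ depending on $\|\nabla u_\epsilon\|_p^{\gamma p}$ through $M$) yields $\|\nabla u_\epsilon\|_p^p\leq C\bigl(1+\|\nabla u_\epsilon\|_p^{-\gamma p(\delta+1)/(p-1-\delta)}\bigr)$; the exponent on the right is $<p$ precisely under $(\Gamma_0)$, and that is what delivers the $\epsilon$-uniform $W_0^{1,p}$-bound. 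You correctly flag the gradient bound as the crux, but the mechanism you give for $(\Gamma_0)$ would not produce it.

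Third, the vertical unboundedness argument does not follow from the testing identity plus Sobolev embedding: a divergent $\|\nabla u\|_p$ does not force $\|u\|_\infty\to\infty$. The paper argues by contradiction: assuming $\|u\|_\infty\leq C$ on $\Sigma$, it uses the subsolution $\underline{u}=s\phi_1^{p/(p-1-\delta)}$ (with $s$ now involving $\lambda$ and $\|\nabla u\|_p^{p\gamma}$) and the Comparison Principle to get $u\geq\underline{u}$; testing then gives the \emph{upper} bound $\|\nabla u\|_p\leq C\lambda^{1/(p+(\delta+1)(\gamma-1))}$, and feeding this back into the lower bound yields $u\geq C\bigl(\lambda/(1+\lambda^{p\gamma/(p+(\delta+1)(\gamma-1))})\bigr)^{1/(p-1-\delta)}\phi_1^{p/(p-1-\delta)}$, which diverges as $\lambda\to\infty$ exactly when $\gamma<1$, contradicting the sup bound. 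Your $Proj_{\mathbb{R}^+}\Sigma=(0,\infty)$ argument has an analogous issue: testing with $u_n$ and $(f_\infty)$ bounds $\|\nabla u_n\|_p$ (and also you would need a boundary barrier to control the singular part of $f(x,u_n)u_n$), not $\|u_n\|_\infty$; the paper instead reuses the $L^\infty$ sub/supersolution sandwich of the analogue of Lemma~\ref{L2}.
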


We remark that there are few papers dealing with  Kirchhoff type problems with singular nonlinearity. In this direction, we found some results in  \cite{MR3250494} and \cite{MR3352001}  for weak singularities, that permitted them to approach by variational methods. Recently, in 2018, Agarwal, O'Regal and Yan \cite{arg} studied a Kirchhoff-type problem with nonlinearity of the form $f (x, u) = K(x)u^{\delta}$, for $\delta < 0$, in the context of the Laplacian operator. They used principally  sub-supersolution techniques to get existence and uniqueness of classical solution.

It is worth mentioning that, as far as we know, non-autonomous and  non-local quasilinear problems with very singular nonlinearities have not yet been considered in the literature, and the same is true for Kirchhoff-type problems. Our results contribute to the literature principally by:
\begin{itemize}
\item[$i)$] Theorem \ref{T1}, being new even in the context of local problems (and for $p=2$), by guaranteeing the existence of a \textit{continuum} of solutions for a strongly-singular problem in the weak solutions setting.  Moreover, the conclusion that this \textit{continuum} is horizontally unbounded is obtained without any boundedness condition on $f$, contrary to Theorem 1.9 and Corollary 1.10 in \cite{MR0427826},
\item[$ii)$] Theorem \ref{T2}, proving the principal results of Figueiredo-Sousa et. al. \cite{MR3694626} in the context of strongly-singular problems as well,
\item[$iii)$] Theorem \ref{T4}, including singularity also in the non-local term and obtaining global existence of solutions in $W^{1,p}_{\mathrm{loc}}(\Omega)\cap C(\overline{\Omega})$ setting. This situation was not yet considered in the literature,
\item[$iv)$] Theorem \ref{T5}, including non-autonomous Kirchhoff terms and capturing the same sharp power  for existence of solutions still in $W_0^{1,p}(\Omega)$ for the associated local problem.
\end{itemize}

Our work follows the following structure. In the second section, we present the proof of Theorem \ref{T1}. In section 3, we establish the fundamental tools for our approach. The qualitative  study of the \textit{continuum} obtained in the second section will be done in section 4, as well  the proof of Theorem \ref{T4}. We conclude the section 4, by studying  the degenerate case in problem $(P)$. In the last section we prove Theorem \ref{T5}.
\vspace{0.2cm}

Throughout this paper, we make use of the following notations:
\begin{itemize}
\item The norms in $L^p(\Omega)$ and $W_0^{1,p}(\Omega)$ are denoted by $\|u\|_p$ and $\|\nabla u\|_p,$ respectively.
\item $C_c^{\infty}(\Omega) = \{ u : \Omega \rightarrow \mathbb{R}: u \in C^{\infty}(\Omega) \ \mbox{and} \ supp ~ u \subset \subset \Omega \}$.
\item $B_R(\lambda_0, u_0) = \{(\lambda, u) \in \mathbb{R} \times C(\overline{\Omega}) : |\lambda - \lambda_0| + \Vert u - u_0\Vert_{\infty} < R\}. $
\end{itemize}

\section{Proof of Theorem \ref{T1}}
Throughout this paper, we will denote by $ e \in C_0^1(\overline{\Omega})$ the unique positive solution of
$$
-\Delta_p u = 1 ~~ \mbox{in} ~\Omega, ~u|_{\partial \Omega} = 0
$$
and by $\phi_1 \in C_0^1(\overline{\Omega})$ the first positive normalized eigenfunction associated to the first positive eigenvalue of $(-\Delta_p, W_0^{1,p}(\Omega))$, that is,
$$-\Delta_p \phi_1 = \lambda_1\phi_1^{p-1} ~~ \mbox{in} ~\Omega, ~\phi_1|_{\partial \Omega} = 0.
$$

For each $\epsilon>0$ given, let us introduce the following $\epsilon$-perturbed problem
$$
 (P_{\epsilon})~ ~ ~ \left\{
\begin{array}{l}
-A\Big(x, \displaystyle\int_{\Omega}u^{\gamma}dx\Big)\Delta_pu = {\lambda }f(x,u + \epsilon)  ~ \mbox{in } \Omega,\\
    u>0 ~ \mbox{in }\Omega,~~
    u=0  ~ \mbox{on }\partial\Omega
\end{array}
\right.
$$
and show that  $(P_{\epsilon})$ admits an unbounded $\epsilon$-\textit{continuum} of positive solutions by using the Rabinowitz Global Bifurcation Theorem, more specifically Theorem 3.2 in \cite{MR0301587}.

 \begin{lemma}\label{L1} Suppose that  $\gamma \geq 0$ and $(A_0)$ hold.  Then there exists an unbounded \textit{continuum} $\Sigma_{\epsilon} \subset \mathbb{R}^+ \times C(\overline{\Omega})$ of positive solutions of  $(P_{\epsilon})$ that emanates from $(0,0), $ for each $\epsilon > 0$ given.
 \end{lemma}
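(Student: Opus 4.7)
The strategy is to recast $(P_\epsilon)$ as a compact fixed-point equation in $C(\overline{\Omega})$ and invoke the Rabinowitz Global Bifurcation Theorem as in \cite{MR0301587}. For $\lambda \geq 0$ and $u \in C(\overline{\Omega})$, let $T_\epsilon(\lambda, u)$ denote the unique $W_0^{1,p}(\Omega)$-solution $v$ of
\[
-\Delta_p v = \frac{\lambda f(x, u^+ + \epsilon)}{A\!\left(x, \int_\Omega (u^+)^\gamma dx\right)} \mbox{ in } \Omega, \quad v|_{\partial \Omega} = 0.
\]
Since $\epsilon > 0$ separates $u^+ + \epsilon$ from the singularity of $f$, $(A_0)$ bounds $A$ away from $0$ on compact sets, and $\gamma \geq 0$ renders $u \mapsto \int_\Omega (u^+)^\gamma dx$ continuous on $C(\overline{\Omega})$, the right-hand side lies in $L^\infty(\Omega)$ whenever $u$ is bounded. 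Standard $C^{1,\alpha}$-regularity for the $p$-Laplacian with bounded right-hand side, combined with the compact embedding $C^{1,\alpha}(\overline{\Omega}) \hookrightarrow C(\overline{\Omega})$, makes $T_\epsilon : \mathbb{R}^+ \times C(\overline{\Omega}) \to C(\overline{\Omega})$ completely continuous, and the strong maximum principle ensures that any non-trivial fixed point is strictly positive in $\Omega$, hence a positive solution of $(P_\epsilon)$.

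The structural observation is that $T_\epsilon(0, \cdot) \equiv 0$, so at $\lambda = 0$ the only fixed point is $u = 0$. Exploiting the homogeneity $-\Delta_p(sv) = s^{p-1}(-\Delta_p v)$, one writes $T_\epsilon(\lambda, u) = \lambda^{1/(p-1)} S_\epsilon(u)$, where $S_\epsilon$ is the $\lambda = 1$ solution operator and maps bounded sets of $C(\overline{\Omega})$ to bounded sets. For any prescribed $R > 0$, $T_\epsilon(\lambda, \cdot)$ sends $\overline{B_R}$ strictly into $B_R$ once $\lambda$ is sufficiently small, and Schauder's theorem yields a fixed point $u_\lambda$ with $\|u_\lambda\|_\infty \leq \lambda^{1/(p-1)} \sup_{u \in \overline{B_R}} \|S_\epsilon(u)\|_\infty \to 0$ as $\lambda \to 0^+$. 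Hence $(0, 0)$ is accumulated by non-trivial positive solutions of $(P_\epsilon)$.

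To upgrade accumulation to a continuum, I would apply Theorem 3.2 of \cite{MR0301587} to the connected component $\Sigma_\epsilon$ of the fixed-point set that contains $(0, 0)$: the Rabinowitz alternative states that $\Sigma_\epsilon$ is either unbounded in $\mathbb{R}^+ \times C(\overline{\Omega})$ or returns to a second trivial solution. But $f(x, \epsilon) > 0$ forces $T_\epsilon(\lambda, 0) \not\equiv 0$ for every $\lambda > 0$, so $u = 0$ is a fixed point only at $\lambda = 0$; no second trivial solution exists, and $\Sigma_\epsilon$ must be unbounded. The main obstacle is adapting the classical bifurcation framework to this degenerate situation: in the standard setting one has a trivial line $\{(\lambda, 0) : \lambda \in \mathbb{R}\}$ and bifurcation is detected by a jump of the linearized Leray-Schauder index, whereas here the trivial branch collapses to the corner $(0, 0)$ and the degree-$(p-1)$ homogeneity of $-\Delta_p$ precludes a direct linearization. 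The cleanest workaround is to reparametrize by $\mu = \lambda^{1/(p-1)}$, so that $u = \mu S_\epsilon(u)$ is linear-like in the parameter, and then compute $\deg(I - T_\epsilon(\mu^{p-1}, \cdot), B_R, 0) = 1$ at $\mu = 0$ while exhibiting that no uniform a priori bound on fixed points can hold along the component, forcing it to leave every bounded subset of $\mathbb{R}^+ \times C(\overline{\Omega})$.
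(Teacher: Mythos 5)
Your proposal follows essentially the same route as the paper: freeze the non-local coefficient and the singular argument at $v\in C(\overline\Omega)$ to obtain a compact solution operator $T_\epsilon:\mathbb{R}^+\times C(\overline\Omega)\to C(\overline\Omega)$, observe that $T_\epsilon(0,\cdot)\equiv 0$, invoke Theorem~3.2 of Rabinowitz to produce an unbounded continuum of fixed points emanating from $(0,0)$, and finish with the strong maximum principle to get strict positivity (the choice of $u^+$ versus $|v|$ in the truncation is immaterial). The one place your write-up goes astray is the final paragraph: the ``degenerate situation'' you flag (trivial branch collapsing to the corner $(0,0)$, no linearization of $-\Delta_p$) is exactly the hypothesis under which Theorem~3.2 is stated, so no reparametrization by $\mu=\lambda^{1/(p-1)}$ and no index computation at a nontrivial eigenvalue are needed; moreover the alternative ``unbounded or returns to a second trivial solution'' that you quote is the conclusion of the eigenvalue-bifurcation Theorem~1.3, not of Theorem~3.2, whose sole conclusion in this setting is that the component through $(0,0)$ is unbounded. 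Dropping that paragraph, and adding the one-line remark that $T_\epsilon(\lambda,0)=0$ forces $\lambda=0$ so that $\Sigma_\epsilon\setminus\{(0,0)\}$ consists only of nontrivial solutions, would bring your argument exactly in line with the paper's proof.
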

 \begin{proof}
 It follows from the classical theory of existence and regularity for elliptic equations and hypothesis $(A_0)$ that the problem
 \begin{equation}\label{6}
 -A\Big(x, \displaystyle\int_{\Omega}|v|^{\gamma}dx\Big)\Delta_pu = {\lambda }f(x,|v| + \epsilon)  ~ \mbox{in } \Omega,  ~ ~   u=0  ~ \mbox{on } ~ \partial\Omega
 \end{equation}
admits a unique solution  $u \in C^{1, \alpha}(\overline{\Omega}),$ for some $\alpha \in (0,1)$ and for each $ (\lambda, v)  \in \mathbb{R}^+ \times C(\overline{\Omega})$. Thus, the operator $T: \mathbb{R}^+ \times C(\overline{\Omega}) \rightarrow C(\overline{\Omega}),$ which associates  each pair $(\lambda, v) \in \mathbb{R}^+ \times C(\overline{\Omega})$ to the only weak solution of (\ref{6}), is well-defined.

It is classical to show  that $T$ is a compact operator, using Arzel\`a-Ascoli's Theorem. Hence, we are able to apply Theorem 3.2 of \cite{MR0301587} to get  an unbounded $\epsilon$-\textit{continuum} $\Sigma_{\epsilon} \subset \mathbb{R}^+ \times C(\overline{\Omega})$ of solutions of
\begin{equation}\label{7}
T(\lambda, u) = u.
\end{equation}
 Moreover, by the definition $T(0, v) = 0$ and if $T(\lambda, 0) = 0$ implies $\lambda = 0$, we can conclude that $\Sigma_{\epsilon} \backslash \{(0,0)\}$  is formed by nontrivial solutions of (\ref{7}).

 Finally, using that $0<f(x, |v| + \epsilon)/A\Big(x, \int_{\Omega}|v|^{\gamma}\Big) \in L^{\infty} (\Omega)$ for each given $v \in C(\overline{\Omega})$ and classical strong maximum principle, we obtain that $T((\mathbb{R}^+\backslash \{0\}) \times C(\overline{\Omega})) \subset C(\overline{\Omega})_+$, where $C(\overline{\Omega})_+ = \{ u \in C(\overline{\Omega}) : u > 0 ~\mbox{in} ~ \Omega\}$. Therefore, $\Sigma_{\epsilon}$ is a $\epsilon$-\textit{continuum} of positive solutions of $(P_{\epsilon}), $  for each $\epsilon>0$ given. This ends the proof.
\end{proof}
\fim
\vspace{0.4cm}

As a consequence of  the result we just proved,   for every $\epsilon > 0$ and  for each bounded open set $U \subset \mathbb{R}\times C(\overline{\Omega})$  containing $(0,0)$, there exists a pair $ (\lambda_\epsilon, u_\epsilon) \in \Sigma_{\epsilon} \cap \partial U$. An essential argument in our approach is to show that
 if $ \epsilon_n \rightarrow 0^+ $ and $\lambda_n \rightarrow \lambda,$ then $ \lambda> 0 $ and $ \{u_{\epsilon_n} \} $ converges in $ C(\overline{\Omega})$ to a function $ u \in W_{\mathrm{loc}}^{1,p}(\Omega) \cap C(\overline{\Omega})$, where $(\lambda, u)$ is a solution of $(P).$

To prove this,  let us begin with the following result which is motivated by the arguments of Crandall, Rabinowitz and Tartar \cite{MR0427826}.
\begin{lemma}\label{L2} Admit that $(A_0)$ and $(f_0)$ hold. Let $U \subset \mathbb{R} \times C(\overline{\Omega})$ be a bounded open set containing $(0,0)$, a positive constant $K$ and a pair $(\lambda_{\epsilon}, u_{\epsilon}) \in \Big((0, \infty) \times (C(\overline{\Omega}) \cap W_0^{1,p}(\Omega))\Big) \cap \partial U$ of solution of $(P_{\epsilon})$ satisfying $\lambda_{\epsilon} \leq K$ and $u_{\epsilon} \leq K$ in $\overline{\Omega}$. Then, there exist constants $\mathcal{K}_1 = \mathcal{K}_1(K, U)>0$, $\mathcal{K}_2 = \mathcal{K}_2(k, K)>0$ and $\epsilon_0 > 0$ such that
\begin{equation}\label{8}
\lambda_{\epsilon}^{\frac{1}{p-1}}\mathcal{K}_1(K,U) \phi_1 \leq u_{\epsilon} \leq k + \lambda_{\epsilon}^{\frac{1}{p-1}}\mathcal{K}_2(k, K)^{\frac{1}{p-1}}e ~~ \mbox{in} ~\Omega,
\end{equation}
for each $k \in (0, K]$ fixed and for all $0 < \epsilon < \epsilon_0$.
\end{lemma}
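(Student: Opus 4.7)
The plan is to treat the two halves of (\ref{8}) separately, in both cases rewriting the equation $(P_\epsilon)$ in the \emph{local} form $-\tilde A(x)\Delta_p u_\epsilon=\lambda_\epsilon f(x,u_\epsilon+\epsilon)$, where $\tilde A(x):=A(x,\int_\Omega u_\epsilon^\gamma\,dx)$. Because $0\le u_\epsilon\le K$ forces $\int_\Omega u_\epsilon^\gamma\,dx\in[0,K^\gamma|\Omega|]$, hypothesis $(A_0)$ gives uniform bounds $0<A_{\min}(K)\le \tilde A(x)\le A_{\max}(K)<\infty$, and these together with the continuity and positivity of $f$ on compact subsets of $\overline\Omega\times(0,\infty)$ furnish the barriers to be compared with $u_\epsilon$.

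For the upper half of (\ref{8}) I would fix $k\in(0,K]$, require $\epsilon_0\le 1$, and set $M_k:=\max_{\overline\Omega\times[k,K+1]}f<\infty$. The candidate supersolution $\bar u:=k+(\lambda_\epsilon M_k/A_{\min})^{1/(p-1)}e$ satisfies $-\Delta_p\bar u\equiv \lambda_\epsilon M_k/A_{\min}$ pointwise, since adding a constant does not alter the $p$-Laplacian. On the set $\{u_\epsilon>\bar u\}\subset\{u_\epsilon>k\}$ one has $u_\epsilon+\epsilon>k$ and hence $-\Delta_p u_\epsilon=\lambda_\epsilon f(x,u_\epsilon+\epsilon)/\tilde A\le \lambda_\epsilon M_k/A_{\min}=-\Delta_p\bar u$. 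Because $\bar u\ge k>0$ on $\partial\Omega$ while $u_\epsilon=0$ there, the function $(u_\epsilon-\bar u)^+$ is compactly supported in $\Omega$ and belongs to $W_0^{1,p}(\Omega)$; testing the weak inequality $-\Delta_p u_\epsilon\le -\Delta_p\bar u$ with it and using the monotonicity of the $p$-Laplacian forces this set to be empty. This gives the right-hand inequality of (\ref{8}) with $\mathcal K_2(k,K):=M_k/A_{\min}$.

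The lower bound is more delicate, and my strategy is to first exploit the hypothesis $(\lambda_\epsilon,u_\epsilon)\in\partial U$ to \emph{bootstrap} a positive lower bound on $\lambda_\epsilon$. Since $U$ is open and contains $(0,0)$, there exists $r=r(U)>0$ with $B_r(0,0)\subset U$, and therefore $\lambda_\epsilon+\|u_\epsilon\|_\infty\ge r$. Applying the upper bound just proved with the specific choice $k=r/4$ gives $\|u_\epsilon\|_\infty\le r/4+\lambda_\epsilon^{1/(p-1)}\mathcal K_2(r/4,K)^{1/(p-1)}\|e\|_\infty$, and substituting yields $\lambda_\epsilon+C(U,K)\lambda_\epsilon^{1/(p-1)}\ge 3r/4$, which, because the left side is continuous, strictly increasing in $\lambda_\epsilon$ and vanishes at $\lambda_\epsilon=0$, forces $\lambda_\epsilon\ge\lambda_*(U,K)>0$. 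With this at hand, I invoke $(f_0)$ for the fixed constant $M:=A_{\max}\lambda_1/\lambda_*$ to obtain $\eta=\eta(U,K)>0$ such that $f(x,t)\ge Mt^{p-1}$ for every $t\in(0,\eta]$ uniformly in $x$; I then shrink $\epsilon_0\le\eta/2$ and fix $\tau\in(0,\eta/(2\|\phi_1\|_\infty))$, both depending only on $K$ and $U$. The barrier $\underline u:=\tau\phi_1$ satisfies $\underline u+\epsilon\le\eta$ throughout $\overline\Omega$, and the identity $-\Delta_p(\tau\phi_1)=\tau^{p-1}\lambda_1\phi_1^{p-1}$ together with the lower bound on $\lambda_\epsilon$ yields
\[
-\tilde A\Delta_p\underline u\le A_{\max}\tau^{p-1}\lambda_1\phi_1^{p-1}\le \lambda_\epsilon M\tau^{p-1}\phi_1^{p-1}\le \lambda_\epsilon f(x,\underline u+\epsilon),
\]
so $\underline u$ is a subsolution of the local equation satisfied by $u_\epsilon$. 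The Comparison Principle for $W_{\mathrm{loc}}^{1,p}(\Omega)$-sub- and supersolutions (Theorem 2.1 of \cite{NOSSO}) then gives $\underline u\le u_\epsilon$ in $\Omega$; using $\lambda_\epsilon\le K$ to rewrite $\tau\phi_1\ge(\tau/K^{1/(p-1)})\lambda_\epsilon^{1/(p-1)}\phi_1$ produces the left-hand inequality of (\ref{8}) with $\mathcal K_1(K,U):=\tau/K^{1/(p-1)}$.

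The main obstacle I anticipate is precisely the bootstrap step: without the topological hypothesis $(\lambda_\epsilon,u_\epsilon)\in\partial U$ the parameter $\lambda_\epsilon$ could be arbitrarily close to $0$, and then the required inequality $A_{\max}\lambda_1\le \lambda_\epsilon M$ cannot be maintained uniformly, since $(f_0)$ forces $\eta(M)\to 0$ as $M\to\infty$ and $\underline u+\epsilon\le\eta$ would break down. This is exactly why $\mathcal K_1$ is allowed to depend on $U$ and not just on $K$, and it is the only nontrivial place where the assumption that $(\lambda_\epsilon,u_\epsilon)$ lies on $\partial U$ enters; everything else is a combination of the explicit barrier computations above with the comparison principle of \cite{NOSSO}.
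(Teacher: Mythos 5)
Your upper bound, and the bootstrap argument showing $\lambda_\epsilon\ge\lambda_*(U,K)>0$ from the topological fact $(\lambda_\epsilon,u_\epsilon)\in\partial U$, are correct and essentially the same as the paper's. The gap is in the final step of the lower bound, where you invoke Theorem~2.1 of \cite{NOSSO} (Theorem~\ref{unicidade1}) to conclude $\underline u\le u_\epsilon$. That comparison principle is stated only for nonlinearities of the form $a_1(x)u^{\theta_1}+a_2(x)u^{\theta_2}$ with $\theta_1,\theta_2<p-1$, and it cannot be applied here for two reasons. First, your inequality chain produces $\underline u$ as a subsolution of $-\tilde A\Delta_p u=\lambda_\epsilon M u^{p-1}$, whose exponent is $p-1$, i.e.\ exactly the endpoint excluded by the hypothesis $\theta_i<p-1$ (the decreasing-quotient mechanism behind the proof degenerates there). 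Second, $u_\epsilon$ is \emph{not} a global supersolution of that reference equation: the estimate $f(x,u_\epsilon+\epsilon)\ge M(u_\epsilon+\epsilon)^{p-1}$ only holds where $u_\epsilon+\epsilon\le\eta$, yet $u_\epsilon$ may be as large as $K$ elsewhere, so no common reference problem with the structure of Theorem~\ref{unicidade1} is available.

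What the paper does instead is bypass any off-the-shelf comparison principle and run the Picone inequality directly on the pair $(\underline u_\epsilon+\epsilon,u_\epsilon+\epsilon)$. The Picone test functions carry the factor $\bigl((\underline u_\epsilon+\epsilon)^p-(u_\epsilon+\epsilon)^p\bigr)^+$, so the integral localizes automatically to the set $[\underline u_\epsilon>u_\epsilon]$. On that set one has $u_\epsilon+\epsilon<\underline u_\epsilon+\epsilon<a/2$, and only \emph{there} does $(f_0)$ deliver the bound $f(x,u_\epsilon+\epsilon)\ge\tilde K(u_\epsilon+\epsilon)^{p-1}$, which makes the bracketed term strictly negative and yields the contradiction. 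To close your argument you would need to replace the citation of Theorem~\ref{unicidade1} with this Picone computation (or prove a new comparison lemma tailored to the $\epsilon$-problem that exploits the small-value restriction in the same way).
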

\begin{proof} Let $K>0$ as above.  Besides this,
define $ 0 < a_K = \displaystyle\min_{\overline{\Omega}\times [0, ~ |\Omega|K^{\gamma}]} A(x, t)$ and $$\mathcal{K}_2(k, K) = \max\Big\{\frac{f(x,t)}{a_K}: ~ x \in \overline{\Omega} ~~\mbox{and} ~ k \leq t \leq K + 1\Big\}, $$ where $k$ is a fixed  number on  $(0, K ]$. Thus, $\mathcal{K}_2(k,\cdot)$ is non-decreasing for each $k$ fixed.

To show the second inequality in (\ref{8}), let us  consider the open set $\mathcal{O}_k = \{ x \in \Omega : u_{\epsilon} > k\}$. Then, it follows from the definition of $\mathcal{K}_2$ that
\begin{eqnarray*}
-\Delta_p \Big(  k + \lambda_{\epsilon}^{\frac{1}{p-1}}\mathcal{K}_2(k, K)^{\frac{1}{p-1}}e \Big) &=& \lambda_{\epsilon}\mathcal{K}_2(k, K) \geq \frac{\lambda_{\epsilon}}{a_K} f(x, u_{\epsilon}+ \epsilon)\\
&\geq & \frac{\lambda_{\epsilon}}{A\Big(x, \int_{\Omega} u_{\epsilon}^{\gamma}\Big)} f(x, u_{\epsilon}+\epsilon) =-\Delta_p u_\epsilon~~ \mbox{in} ~ \mathcal{O}_k .
\end{eqnarray*}
Since $  k + \lambda_{\epsilon}^{\frac{1}{p-1}}\mathcal{K}_2(k, K)^{\frac{1}{p-1}}e - u_{\epsilon} = \lambda_{\epsilon}^{\frac{1}{p-1}}\mathcal{K}_2(k, K)^{\frac{1}{p-1}}e \geq 0 $ on $\partial\mathcal{O}_k$ holds true, the claim is valid in $\mathcal{O}_k$ by classical comparison principle. Now, using the above fact together with the definition of $\mathcal{O}_k$, we conclude that $u_{\epsilon} \leq k + \lambda_{\epsilon}^{\frac{1}{p-1}}\mathcal{K}_2(k, K)^{\frac{1}{p-1}}e $ in $\Omega. $

Now, we are going to prove the first inequality in (\ref{8}). Let us denote by $ \delta' = \mbox{dist}(\partial U, (0,0)) > 0$. We claim that
$$
\lambda_{\epsilon} >  C_* := \min\Big\{ \frac{1}{\mathcal{K}_2(\delta'/4, K)}\Big(\frac{\delta'}{4\|e\|_\infty}\Big)^{p-1}, \frac{\delta'}{4}\Big\} .
$$
 In fact, otherwise by taking $k = \delta'/4$ in the second inequality in (\ref{8}),   we conclude that $(\lambda_\epsilon,  u_{\epsilon} ) \in B_{3\delta'/ 4}(0,0)\subset \mathbb{R}\times C(\overline{\Omega})$, which is an absurd as $(\lambda_{\epsilon}, u_\epsilon) \in \partial U. $

Now, by defining $\underline{u}_{\epsilon} = \lambda_{\epsilon}^{\frac{1}{p-1}}\mathcal{K}_1(K, U)\phi_1$, where $\mathcal{K}_1(K, U)$  will be chosen later, it follows from Picone's inequality, hypothesis $(A_0)$ and the fact that $(\lambda_\epsilon,u_\epsilon)$ is a solution of $(P_\epsilon)$, that
\begin{eqnarray}\label{9}
0 & \leq & \displaystyle\int_{\Omega}\Big[ |\nabla\underline{u}_{\epsilon}|^{p-2}\nabla\underline{u}_{\epsilon}\nabla \Big(\frac{(\underline{u}_{\epsilon} + \epsilon)^p - ({u}_{\epsilon} + \epsilon)^p}{(\underline{u}_{\epsilon} + \epsilon)^{p-1}}\Big)^+ -  |\nabla{u}_{\epsilon}|^{p-2}\nabla{u}_{\epsilon}\nabla \Big(\frac{(\underline{u}_{\epsilon} + \epsilon)^p - ({u}_{\epsilon} + \epsilon)^p}{({u}_{\epsilon} + \epsilon)^{p-1}}\Big)^+\Big]^+dx \nonumber\\
& \leq & \lambda_{\epsilon} \displaystyle\int_{\Omega} \Big[\frac{\lambda_1\mathcal{K}_1^{p-1}\phi_1^{p-1}}{(\lambda_{\epsilon}^{1/(p-1)}\mathcal{K}_1\phi_1+ \epsilon)^{p-1}} - \frac{f(x, u_\epsilon + \epsilon)}{(u_\epsilon + \epsilon)^{p-1}A_K}\Big]\Big((\underline{u}_{\epsilon} + \epsilon)^p - (u_{\epsilon} + \epsilon)^p\Big)^+ dx \nonumber\\
& \leq & \lambda_{\epsilon}\displaystyle\int_{\Omega}\Big[ \frac{\lambda_1}{\lambda_{\epsilon}} - \frac{f(x, u_\epsilon + \epsilon)}{(u_{\epsilon} + \epsilon)^{p-1}\displaystyle A_K}\Big]\Big((\underline{u}_{\epsilon} + \epsilon)^p - (u_{\epsilon} + \epsilon)^p\Big)^+ dx,
\end{eqnarray}
where $A_K = \max_{\overline{\Omega}\times [0, K^{\gamma}|\Omega|]} A$.

To complete the proof, let us argument by contradiction. First,  let us  fix $\tilde{K} > \big( \lambda_1 A_K\big) /C_*$ and conclude from hypothesis $(f_0)$  that there exists  $a > 0$ small enough such that $f(x, t) \geq \tilde{K}t^{p-1},$ for all $x\in \Omega$ and $0 < t < a$. Hence, by choosing  $\mathcal{K}_1(K, U) = {a}/\Big({4K^{\frac{1}{p-1}}\|\phi_1\|_{\infty}}\Big)$, we claim that $[\underline{u}_{\epsilon} > u_\epsilon]$ has zero measure for every $ \epsilon <\epsilon_0:=a/4 $ given. Otherwise, if we assume $\vert [\underline{u}_{\epsilon} > u_\epsilon] \vert >0$  for some $0<\epsilon <\epsilon_0 $, we get
$${u}_{\epsilon} + \epsilon \leq \underline{u}_{\epsilon} + \epsilon < \frac{a}{2} ~\mbox{on } [\underline{u}_{\epsilon} > u_\epsilon].$$

Therefore, by going back to (\ref{9}) and using ${\lambda_1}/{\lambda_\epsilon} \leq {\lambda_1}/{C_*}$, we have
\begin{eqnarray*} 0 &\leq & \lambda_{\epsilon}\displaystyle\int_{\Omega}\Big[ \frac{\lambda_1}{\lambda_{\epsilon}} - \frac{f(x, u_\epsilon + \epsilon)}{(u_{\epsilon} + \epsilon)^{p-1}A_K}\Big]\Big((\underline{u}_{\epsilon} + \epsilon)^p - (u_{\epsilon} + \epsilon)^p\Big)^+ dx \\
&  \leq & \lambda_{\epsilon}\displaystyle\int_{\Omega}\Big[ \frac{\lambda_1}{C_*} - \frac{\tilde{K}(u_{\epsilon}+\epsilon)^{p-1}}{(u_{\epsilon} + \epsilon)^{p-1}A_K}\Big]\Big((\underline{u}_{\epsilon} + \epsilon)^p - (u_{\epsilon} + \epsilon)^p\Big)^+ dx < 0,
\end{eqnarray*}
which is an absurd. Hence, $\lambda_{\epsilon}^{\frac{1}{p-1}}\mathcal{K}_1(K,U) \phi_1 \leq u_{\epsilon}$ in $\Omega$ for all $0 < \epsilon < \epsilon_0$, as we claimed.

\end{proof}
\fim
\vspace{0.2cm}

\noindent\textit{\textbf{Completion of proof of Theorem \ref{T1}:}}

For each $i \in \mathbb{N}$ given, define $$\mathcal{F}_i = \Big\{(\lambda, u) \in \mathbb{R}^+\times C(\overline{\Omega}) ~\mbox{that solves (P)} :  \frac{\lambda^{\frac{1}{p-1}}}{i} \phi_1(x) \leq u(x) \leq k + \lambda^{\frac{1}{p-1}}\mathcal{K}_2(k, i)^{\frac{1}{p-1}}e(x) ~ \mbox{in} ~ \Omega ~\mbox{for each} ~ k \in (0, i] \Big\},$$
 where  $\mathcal{K}_2(k,i)$ was introduced in the Lemma \ref{L2}.

To end the proof, it suffices to set
\begin{equation}
\label{cd}
\mathcal{F} = \displaystyle\bigcup_{i \in \mathbb{N}} \mathcal{F}_i \cup \{(0,0)\}\subset \mathbb{R}^+ \times C(\overline{\Omega})
\end{equation}
and  prove that there is an unbounded connected component $\Sigma \subset \mathcal{F}$. By standard argument of  Topology \cite{MR0099642}, the existence of  $\Sigma$  is a consequence of the following two claims:
\vspace{0.1cm}

\noindent \textit{Claim 1:} For each $U \subset \mathbb{R} \times C(\overline{\Omega}) $  bounded neighborhood
of $(0, 0)$ in $ \mathbb{R} \times C(\overline{\Omega})$,  there is a solution $(\lambda, u) \in \partial U \cap \mathcal{F}. $
\vspace{0.1cm}

\noindent \textit{Claim 2:} Closed and bounded (in $\mathbb{R}\times C(\overline{\Omega}))$  subsets of $\mathcal{F}$ are compact.
\vspace{0.1cm}

Let us prove each of the above claims one by one.
\vspace{0.1cm}

\textit{Proof of Claim 1:} Consider $U \subset \mathbb{R} \times C(\overline{\Omega}) $ be a  bounded neighborhood of $(0, 0)$ in $ \mathbb{R} \times C(\overline{\Omega})$ and a sequence $\epsilon_n \rightarrow 0^+$. By the Lemma \ref{L1},  there  exists $(\lambda_{n}, u_{n}) =(\lambda_{\epsilon_n}, u_{\epsilon_n}) \in \partial U \cap \Big( (0, \infty)\times W_0^{1,p}(\Omega) \Big)$ a solution of $(P_{\epsilon_n}),$ for each $n \in \mathbb{N}$.  Moreover, as $U$ is a bounded set, we can find a positive constant $K > 0$ such that $0\leq \lambda_n \leq K$ and $0\leq u_n \leq K$ in $\Omega$. Thus, by the Lemma \ref{L2}, we obtain
\begin{equation}\label{10}
\lambda_n^{\frac{1}{p-1}}\mathcal{K}_1(K,U) \phi_1 \leq u_n \leq k + \lambda_{n}^{\frac{1}{p-1}}\mathcal{K}_2(k, K)^{\frac{1}{p-1}}e ~~ \mbox{in} ~\Omega,
\end{equation}
for all $n \in \mathbb{N}$ sufficiently large and for each $k \in (0, K]$ given.

Suppose that $\lambda_n \rightarrow \lambda \geq 0$. If $\lambda = 0$, we conclude by (\ref{10}) that $u_n \rightarrow 0$ in $C(\overline{\Omega})$, that is, $(\lambda_n, u_n) \rightarrow (0,0)$ in $\mathbb{R}\times C(\overline{\Omega})$. Since $(\lambda_n, u_n) \in \partial U$ and $U$ is a bounded neighborhood of $(0,0)$, we obtain a contradiction. Therefore $\lambda > 0$, which implies that $0 < \lambda - \delta' < \lambda_n < \lambda + \delta'$ for $n$ sufficiently large and some $\delta' > 0$.

Consider a sequence $(\Omega_l)$ of open sets in $\Omega$ such that $\Omega_l \subset \Omega_{l+1}$ and $\bigcup_l \Omega_l = \Omega$ and define $\delta_l = \displaystyle\min_{\overline{\Omega}_l}(\lambda - \delta')^{\frac{1}{p-1}}\mathcal{K}_1(K,U) \phi_1,$ for each $l \in \mathbb{N}$. Taking $\varphi = (u_n - \delta_1)^+$ as a test function in $(P_{\epsilon_n})$, using (\ref{10}) and the hypothesis $(A_0)$, we obtain
\begin{eqnarray*}
\displaystyle\int_{[u_n \geq \delta_1]}|\nabla u_n|^pdx  =  \lambda_n\displaystyle\int_{[u_n \geq \delta_1]}\frac{f(x, u_n + \epsilon_n)}{A\Big(x, \int_{\Omega}u_n^{\gamma}\Big)}(u_n - \delta_1)^+dx \leq C_1,
\end{eqnarray*}
where $C_1 > 0 $ is a real constant independent of $n$. Thus, it follows from the previous inequality that $\{u_n\}$ is bounded in $W^{1,p}(\Omega_1). $ Hence, there exists $u_{\Omega_1} \in W^{1,p}(\Omega_1)$ and a subsequence $\{u_{n_j^1}\}$ of $\{u_n\}$ such that
$$
 \left\{
\begin{array}{l}
u_{n_j^1} \rightharpoonup u_{\Omega_1}  \ \mbox{weakly in}  \ W^{1,p}(\Omega_1) \  \mbox{and strongly in} \ L^q(\Omega_1)  \ \mbox{for} \ 1 \leq q < p^* \\
u_{n_j^1} \rightarrow u_{\Omega_1} \ \
a.e.  \ \mbox{in } \ \Omega_1.
\end{array}
\right.
$$

Proceeding as above, we can obtain subsequences $\{u_{n_j^l}\}$ of $\{u_n\}$, with $\{u_{n_j^{l + 1}}\} \subset \{u_{n_j^{l}}\}$, and functions $ u_{\Omega_l} \in W^{1,p}(\Omega_l)$ such that
$$
 \left\{
\begin{array}{l}
u_{n_j^l} \rightharpoonup u_{\Omega_l},\ \mbox{weakly in }  \ W^{1,p}(\Omega_l) \  \mbox{and strongly in} \ L^p(\Omega_l) \ \mbox{for} \ 1 \leq q < p^*  \\
u_{n_j^l} \rightarrow u_{\Omega_l} \ \
a.e. \ \mbox{in}   \ \Omega_l.
\end{array}
\right.
$$

By construction,  we have that $ u_{\Omega_{l+1}}\Big|_{\Omega_l} = u_{\Omega_l}. $ Hence, by defining $$u = \left\{
\begin{array}{l} u_{\Omega_1} \ \ \mbox{in} \ \ \Omega_1, \\ u_{\Omega_{l+1}} \ \ \mbox{in} \ \ \Omega_{l+1}\backslash \Omega_l
, \end{array}
\right.$$ we have that  $u \in  W_{\mathrm{loc}}^{1,p}(\Omega)$ and satisfies (\ref{10}).
In particular, by choosing $i>K$ large enough and using that $\mathcal{K}_2(k,\cdot)$ is non-decreasing, we have that
\begin{equation}\label{11}
\frac{\lambda^{\frac{1}{p-1}}}{i} \phi_1(x) \leq u(x) \leq k + \lambda^{\frac{1}{p-1}}\mathcal{K}_2(k, i)^{\frac{1}{p-1}}e(x)
\end{equation}
holds for each $k \in (0,i]$.

Furthermore, we claim that $(\lambda,u)$ is a solution for $(P)$. Indeed, by taking $\varphi \in C_c^{\infty}(\Omega)$ and using Theorem 2.1 in  \cite{MR1183665}, we have
\begin{equation}\label{12}
\displaystyle\int_{\Omega}|\nabla u_n|^{p-2}\nabla u_n\nabla \varphi dx \rightarrow \displaystyle\int_{\Omega}|\nabla u|^{p-2}\nabla u\nabla \varphi dx,
\end{equation}
up to a subsequence. On the other side, by using the continuity of $f$, the inequality (\ref{10}) and the hypothesis $(A_0)$, we obtain from Lebesgue Dominated Convergence Theorem that
 \begin{equation}\label{13}
\lambda_n\displaystyle\int_{\Omega}\frac{f(x, u_n + \epsilon_n)}{A\Big(x, \int_{\Omega}u_n^{\gamma}\Big)}\varphi dx \rightarrow \lambda\displaystyle\int_{\Omega}\frac{f(x, u )}{A\Big(x, \int_{\Omega}u^{\gamma}\Big)}\varphi dx.
\end{equation}
Thus, from (\ref{12}) and (\ref{13}) it is evident that $(\lambda, u)$ satisfies (\ref{4}). Also, by (\ref{11}) we obtain that $u > 0$ (in the sense of Definition \ref{D2}).  To verify that $u$ satisfies the boundary condition (see Definition \ref{D1}), it sufficient to note that the arguments used above lead us to the fact that the sequence $(u_n - \epsilon)^+$ is  bounded in $W_0^{1,p}(\Omega)$ as well. Therefore, $(u -\epsilon)^+ \in W_0^{1,p}(\Omega)$ for each $\epsilon > 0$ given.

 Finally,  by the continuity of $f$, hypothesis $(A_0)$ and (\ref{10}), we obtain from the classical regularity arguments that $u \in C(\Omega)$ and $u_n \rightarrow u$ in $C(\Theta)$, for each compact set $\Theta \subset \Omega$ given. Thus, by using this fact and (\ref{10}), we obtain that $(\lambda_n, u_n) \rightarrow (\lambda, u)$ in $\mathbb{R}\times C(\overline{\Omega})$, which on combining with (\ref{11}) implies that $(\lambda, u) \in  \partial U \cap \mathcal{F}_i \subset  \partial U \cap \mathcal{F}$, as required.
\vspace{0.2cm}

\noindent\textit{Proof of Claim 2:} Let $\{(\lambda_n, u_n)\} \subset \mathcal{F}$ be a bounded sequence (in $\mathbb{R}\times C(\overline{\Omega}))$. We aim to prove that $\{(\lambda_n, u_n)\}$ admits a subsequence that converges to some element of $\mathcal{F}. $

Initially, let us suppose that finitely many terms of $\{(\lambda_n, u_n)\}$ belongs to
 $\mathbb{R} \times C(\overline{\Omega}) \backslash B_{\delta'}(0,0), $ for each $ \delta'> 0 $ given.  In this case, $(0,0)$ would be an accumulation point of the sequence and our claim will hold. Otherwise,  let us assume that infinitely many terms of $ \{(\lambda_n, u_n) \}$ belongs to  $\mathbb{R} \times C(\overline{\Omega}) \backslash B_{\delta'}(0,0)  ,$ for some $\delta'>0$. Since $ \{(\lambda_n, u_n) \}$ is bounded by a constant $K>0$, the second inequality in (\ref{8}) is true.  Apart from this, since $\Vert (\lambda_n,u_n) \Vert_{\mathbb{R}\times C(\overline{\Omega})} \geq \delta'$ (just for  the subsequence in our assumption), the first inequality in (\ref{8}) holds true as well. Hence, by  fixing $i \in \mathbb{N}$ sufficiently large, we get that $ \{(\lambda_n, u_n) \} \subset \mathcal{F}_i$ for that subsequence.

 Let us fix such subsequence. By the boundedness of $\{\lambda_n \}\subset \mathbb{R}$ and $(\lambda_n, u_n) \subset \mathcal{F}_i \cap \Big(\big(\mathbb{R} \times C(\overline{\Omega}) \big)\backslash B_{\delta'}(0,0)\Big) $, it follows that $\lambda_n \rightarrow \lambda > 0$, up to subsequence.  As a consequence of this, we get
 \begin{equation}
 \label{117}
 \frac{\lambda^{1/(p-1)}}{2i} \phi_1 \leq u_n \leq K~\mbox{in  }\Omega
  \end{equation}
for $n \in \mathbb{N}$ large enough.

 Let $U \subset \subset \Omega$ and $\varphi \in C_c^{\infty}(\Omega)$ such that $0 \leq \varphi \leq 1,  ~\varphi = 1 $ in $U $ with $ U \subset \Theta := \mbox{supp}~ \varphi $. Thus, by (\ref{117}), we have a uniform bound of $(f(x,u_n))$ on $\Theta \times [k,K]$, where $k:=\min_{\Theta}\frac{\lambda^{1/(p-1)}}{2i} \phi_1>0$. Hence, using this information together with boundedness of $(\lambda_n,u_n)$ in $\mathbb{R}\times C(\overline{\Omega}))$, H\"older's inequality  and the hypothesis $(A_0)$, we have
\begin{eqnarray*}
& & \frac{1}{2^p}\displaystyle\int_{\Theta}|\nabla(\varphi u_n)|^p dx  =  \frac{1}{2^p}\displaystyle\int_{\Theta}|\nabla \varphi u_n + \nabla u_n \varphi|^p dx  \leq  \displaystyle\int_{\Theta}|\nabla \varphi|^p{u_n}^pdx + \displaystyle\int_{\Theta} |\nabla u_n|^p\varphi^pdx  \\
& & \leq  C_1\displaystyle\int_{\Theta} |\nabla \varphi|^pdx + \displaystyle\int_{\Theta}|\nabla u_n|^{p-2}\nabla u_n \nabla u_n \varphi^pdx  - \displaystyle\int_{\Theta} |\nabla u_n|^{p-2}\nabla u_n \nabla\varphi(p\varphi^{p-1}u_n)dx \\
&  & \leq C_1 \displaystyle\int_{\Theta}|\nabla \varphi|^pdx + \lambda_n\displaystyle\int_{\Theta} \frac{f(x,u_n)u_n}{A\Big(x, \displaystyle\int_{\Omega}u_n^{\gamma}\Big) }\varphi^pdx + C_2\displaystyle\int_{\Theta}|\nabla u_n|^{p-1}|\nabla \varphi|\varphi^{p-1}u_n dx  \\
& & {\leq}  C_3\Big[1 + \Big(\displaystyle\int_{\Theta}|\varphi\nabla u_n|^pdx\Big)^{\frac{p-1}{p}} \Big(\displaystyle\int_{\Theta} (|u_n\nabla \varphi |)^pdx\Big)^{\frac{1}{p}} \Big] ~~~~~~~(\mbox{using} ~ {(A_0)})\\
& &\leq  C_4 \Big[1 + \Big(\displaystyle\int_\Theta |\nabla (\varphi u_n)|^p dx\Big)^{\frac{p-1}{p}}\Big],
\end{eqnarray*}
where $C_4$ is a positive constant, independent of $n$. Thus, $\{\varphi u_n\}$ is  bounded in $W_0^{1,p}(\Theta)$ and as a consequence of this, $\{u_n\}$ is bounded in $W^{1,p}(U)$. By using the arbitrariness of $U$ and proceeding as in the proof of the Claim 1, we obtain  a function $u \in W_{\mathrm{loc}}^{1,p}(\Omega) \cap C(\overline{\Omega})$ such that
\begin{equation}
\label{1180}
 \left\{
\begin{array}{l}
u_{n} \rightharpoonup u  \ \mbox{weakly in } \ \ W^{1,p}(U) \ \ \mbox{for each  } \ U \subset \subset \Omega, \\
  u_n \rightarrow u ~\mbox{in} ~ C(\overline{\Omega}), \\
  \frac{\lambda^{\frac{1}{p-1}}}{i} \phi_1(x) \leq u(x) \leq k + \lambda^{\frac{1}{p-1}}\mathcal{K}_2(k, i)^{\frac{1}{p-1}}e(x) ~~ \mbox{in} ~ \Omega  ~ \mbox{for all} ~ k \in (0, i]
\end{array}
\right.
\end{equation}
for $i$ as fixed before.

From the last inequality in (\ref{1180}), it follows that $(u-\epsilon)^+ \in W_0^{1,p}(\Omega)$ for each $\epsilon>0$ given, as noted in Claim 1. Hence, to complete the proof of the existence of the  \textit{continuum}, we just need to show that $ (\lambda, u) $ satisfies the equation in $(P)$, that is, (\ref{4}). Since $(\lambda_n, u_n)$ solves $(P_{\epsilon_n})$, it follows from density arguments, (\ref{117}) and (\ref{1180}) that
\begin{equation}\label{14}
\displaystyle\int_{\Omega}|\nabla u_n|^{p-2}\nabla u_n \nabla \Big(\varphi(u_n  - u)\Big)dx  = \lambda_n\displaystyle\int_{\Omega}\frac{f(x, u_n)}{A\Big(x, \int_{\Omega}u_n^{\gamma}\Big)}\varphi(u_n - u)dx \rightarrow 0
\end{equation}
for all $ \varphi \in C_c^{\infty}(\Omega)$.

Since $\{u_n\}$ is a bounded sequence in $W_{\mathrm{loc}}^{1,p}(\Omega)$, we obtain
\begin{equation}\label{15}
\Big| \displaystyle\int_\Omega  |\nabla u_n|^{p-2}\nabla u_n \nabla \varphi (u_n - u)dx \Big| \leq C\|u_n - u\|_{p} \rightarrow 0
\end{equation}
by using the H\"older's inequality. Therefore, it  follows from (\ref{14}) and (\ref{15}) that
$$ \displaystyle\int_{\Omega} \varphi\Big(|\nabla u_n|^{p-2}\nabla u_n - |\nabla u|^{p-2}\nabla u\Big)\nabla (u_n - u)dx \rightarrow 0, $$ up to subsequence, which implies that $\nabla u_n \rightarrow \nabla u$  $a.e. $ in $\Omega. $

Thus,  proceeding as in proof of the Claim 1, we obtain that $(\lambda, u) \in \mathcal{F}_i\subset \mathcal{F}$, which concludes the proof of the existence of an unbounded \textit{continuum} of positive solutions for $(P)$.

In order to finish the proof of later part of the Theorem \ref{T1}, let us assume $(f_\infty)$  and  $A(x,t) > a_0  ~\mbox{ in} ~\overline{\Omega}\times \mathbb{R}^+$ holds for some $a_0>0$.
Assume by contradiction that $Proj_{\mathbb{R}} \Sigma \subset [0,\lambda^*]$ for some $0 < \lambda^* < \infty$, that is,    $0\leq \lambda \leq \lambda^* $ whenever  $(\lambda, u) \in \Sigma$. Hence, by taking $R> 0$ and $\epsilon_n = 1/n $ ($n \in \mathbb{N}$), we obtain by  Lemma \ref{L1} that there exists $(\lambda_n, u_n) =(\lambda_{n,R}, u_{n,R})\in \Sigma_n\cap \partial B_R(0,0)$, where $\Sigma_n$ is  the unbounded $\epsilon_n$-\textit{continuum} of positive solutions of $(P_{\epsilon_n})$ .

We claim that there exists $R_0 > 0$ such that $\lambda_n \geq \lambda^* + 1 $ for all $n \in \mathbb{N}$ and $R > R_0$. Otherwise, we can find a sequence $R_l \rightarrow \infty$ and a subsequence $\{u_{n_l}\}$ satisfying
\begin{equation}
\label{120}
\|u_{n_l}\|_{\infty} = R_l - \lambda_{n_l} \geq R_l - \lambda^* - 1.
\end{equation}
 However, by  Lemma \ref{L2} we have $\|u_{n_l}\|_{\infty} \leq 1 + \mathcal{K}_2(1, R_l)^{1/(p-1)}(\lambda^* + 1)^{1/(p-1)}\|e\|_{\infty}, $ where $\mathcal{K}_2(1, R_l) = \max\Big\{\frac{f(x,t)}{a_{R_l}}: x \in \overline{\Omega} ~\mbox{and}~ 1 \leq t \leq R_l+1\Big\}$ with $a_{R_l} = \displaystyle\min_{\overline{\Omega}\times [0, R_l^{\gamma}|\Omega|]} A\geq a_0$ by our assumption.  Hence, it follows from the hypothesis $(f_\infty)$ that for each $\epsilon > 0$ there exists a positive constant $C_\epsilon^1$ such that $\mathcal{K}_2(1, R_l) \leq C^1_{\epsilon} + \frac{\epsilon}{a_0} R_l^{p-1}$ holds for all $l \in \mathbb{N}$ sufficiently large. As a consequence of these information, we obtain
\begin{equation}\label{Z1}
\|u_{n_l}\|_{\infty} \leq 1 + \Big(C^1_\epsilon + \frac{\epsilon}{a_0} R_l^{p-1}\Big)^{1/(p-1)}(\lambda^* + 1)^{1/(p-1)}\|e\|_{\infty} \leq C^2_\epsilon + C_2 \epsilon^{1/(p-1)}R_l,
\end{equation}
for $l$ large enough and for some positive constants $C^2_\epsilon$ and $C_2$, where $C_2$ is independent of $\epsilon$.

Let   $\epsilon > 0$ be such that  $1 - \epsilon^{1/(p-1)}C_2 > 0$. Since $R_l \to \infty$, we can take a $l$ large enough such that
$R_l > {C_2^{\epsilon+ \lambda^* +1}}/{(1 - \epsilon^{1/(p-1)}C_2)}.$
Thus, by going back to (\ref{Z1	}), we obtain for such $l$ that  $\|u_{n_l}\|_{\infty} \leq C^2_\epsilon + C_2\epsilon^{1/(p-1)}R_l < R_l - \lambda^* - 1 $ holds, but this is a contradiction by (\ref{120}).

Therefore, by fixing  $R> R_0 > 0$ and proceeding as in the proof of the Claim 1, we obtain that $(\lambda_n, u_n)=(\lambda_{n,R}, u_{n,R})$ converges in $\mathbb{R}\times C(\overline{\Omega}) $  to a pair $(\lambda, u) \in \Sigma \cap \partial B_R(0,0)$, which implies that $\lambda \geq \lambda^* + 1$, but this is not possible by the contrary hypothesis of $Proj_{\mathbb{R}^+} \Sigma \subset [0, \lambda^*]$. This ends the proof.
\fim

\section{  $W_{\mathrm{loc}}^{1,p}(\Omega)$-behavior to a parameter for $(p-1)$-sublinear problems}

Let us present some  results which are important in itself and are required to overcome some obstacles on the strategies of Rabinowitz \cite{MR0301587} and Figueiredo-Sousa \cite{MR3694626}, in order to approach non-autonomous non-local singular problems involving p-Laplacian operator in the setting of $W_{\mathrm{loc}}^{1,p}(\Omega)$-solutions.

To enunciate the first one, let us define a  subsolution and a supersolution for the problem
\begin{equation}\label{lsp} \left\{
\begin{array}{l}
  -\Delta_p u=  a_1(x) u^{\theta_1} + a_2(x)u^{\theta_2}  ~\mbox{in } \Omega,\\
    u>0    ~\mbox{in } \partial\Omega,~~
    u>0    ~\mbox{on }  \Omega,
 \end{array}
\right.
\end{equation}
in the following sense.
\begin{definition} A function $\underline{v} \in W_{\mathrm{loc}}^{1,p}(\Omega) $ is a subsolution of $(\ref{lsp})$ if:
\begin{itemize}
\item[$i)$]  there is a positive constant $c_\Theta$ such that $\underline{v} \geq c_\Theta$ in $\Theta$ for each $\Theta \subset \subset \Omega$ given;
\item[$ii)$] the inequality
\begin{eqnarray}\label{3.9}
 \displaystyle\int_{\Omega} |\nabla \underline{v}|^{p-2}\nabla \underline{v} \nabla \varphi dx \leq   \displaystyle\int_{\Omega} \Big({a_1(x) }{\underline{v}^{\theta_1}} + a_2(x)\underline{v}^{\theta_2}\Big) \varphi dx
 \end{eqnarray}
 holds for all $ 0 \leq \varphi \in C_{c}^{\infty}(\Omega)$.
 When $\overline{v}\in W_{\mathrm{loc}}^{1,p}(\Omega) $ satisfies  the reversed inequality in $(\ref{3.9})$, it is called a supersolution of $(\ref{lsp})$.
\end{itemize}
\end{definition}

In this context, we  state a Comparison Principle for $W_{\mathrm{loc}}^{1,p}(\Omega)$-sub and supersolutions, proved in Theorem 2.1 of \cite{NOSSO}.

\begin{thmlet}[$W_{\mathrm{loc}}^{1,p}(\Omega)$-Comparison Principle]\label{unicidade1} Suppose that  $- \infty < \theta_1, \theta_2 < p-1$ and $a_1 + a_2>0$ in $\Omega$ hold. Assume that the pair $(\theta_i, a_i)$ satisfies one of the following hypotheses:
\begin{itemize}
\item [$(h)_1$:] $-1 < \theta_i < p-1 $ and $a_i \in L^{(\frac{p^*}{p^* -1-\theta_i})}(\Omega)$,
\item [$(h)_2$:]$\theta_i < -1$ and $a_i \in L^1(\Omega),$
\item [$(h)_3$:]$\theta_i = -1$ and $a_i \in L^s(\Omega)$ for some $s>1$
\end{itemize}
for $i \in \{1,2\}$.
 If $\underline{v}, \overline{v}  \in W_{{\mathrm{loc}}}^{1,p}(\Omega)$ are subsolution and supersolution of $(\ref{lsp})$, respectively, with  $\underline{v} \leq 0$ in $\partial \Omega$, then $\underline{v} \leq \overline{v}$ a.e. in $\Omega$.
\end{thmlet}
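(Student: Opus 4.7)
The plan is a proof by contradiction via a Picone--Diaz--Saa type inequality tailored to sub-homogeneous nonlinearities, the main technical point being that $\underline{v}, \overline{v}$ lie only in $W_{\mathrm{loc}}^{1,p}(\Omega)$, so global test functions must be built from the generalized boundary trace of Definition 1.1. Suppose, aiming at a contradiction, that $E := [\underline{v} > \overline{v}]$ has positive measure. By Definition 3.1, both functions are bounded below by positive constants on every $\Theta \subset\subset \Omega$, so the pointwise ratios below are well-defined almost everywhere.

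For $\epsilon > 0$ set $\underline{v}_\epsilon = \underline{v} + \epsilon$, $\overline{v}_\epsilon = \overline{v} + \epsilon$, and
$$\Phi_\epsilon := [\underline{v}_\epsilon^{\,p} - \overline{v}_\epsilon^{\,p}]^+.$$
The boundary condition $(\underline{v}-\eta)^+ \in W_0^{1,p}(\Omega)$ for every $\eta > 0$, combined with the pointwise estimate $\Phi_\epsilon \leq p\,\underline{v}_\epsilon^{p-1}(\underline{v}-\overline{v})^+$ near $\partial\Omega$, shows that the candidates
$$\psi^1_\epsilon := \Phi_\epsilon/\underline{v}_\epsilon^{p-1}, \qquad \psi^2_\epsilon := \Phi_\epsilon/\overline{v}_\epsilon^{p-1}$$
belong to $W_0^{1,p}(\Omega)$. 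A truncation-plus-density argument, controlled by the integrability hypotheses $(h)_1$--$(h)_3$, renders them admissible test functions in the sub- and supersolution inequalities. Testing the subsolution inequality against $\psi^1_\epsilon$, the supersolution inequality against $\psi^2_\epsilon$, subtracting, and invoking the classical Picone identity to collapse the gradient terms into a non-negative quantity, one obtains
$$0 \leq \int_\Omega \left[\frac{a_1 \underline{v}^{\theta_1}+a_2 \underline{v}^{\theta_2}}{\underline{v}_\epsilon^{p-1}} - \frac{a_1 \overline{v}^{\theta_1}+a_2 \overline{v}^{\theta_2}}{\overline{v}_\epsilon^{p-1}}\right]\Phi_\epsilon\, dx.$$

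Letting $\epsilon \to 0^+$ and using that $\theta_i < p-1$ makes $t \mapsto a_i(x) t^{\theta_i - (p-1)}$ non-increasing (strictly so, where $a_i > 0$), together with the hypothesis $a_1 + a_2 > 0$ in $\Omega$, the bracket converges pointwise on $E$ to a strictly negative function, while $\Phi_\epsilon \to [\underline{v}^{p} - \overline{v}^{p}]^+ > 0$ on $E$. Dominated convergence then produces a strictly negative right-hand side, contradicting the above estimate and forcing $|E| = 0$.

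The main obstacle is justifying dominated convergence uniformly in $\epsilon$, and this is exactly what the three cases $(h)_1$--$(h)_3$ are calibrated for. In the regime $-1 < \theta_i < p-1$, the exponent $p^*/(p^*-1-\theta_i)$ is the H\"older conjugate of the Sobolev target $p^*$ of $\psi^j_\epsilon \in W_0^{1,p}(\Omega)$, producing a uniform $L^1$ bound on the integrand. When $\theta_i < -1$, the singularity of $\underline{v}^{\theta_i}$ at the zero set of $\underline{v}$ is the obstruction, but the ratio $\Phi_\epsilon/\underline{v}_\epsilon^{p-1}$ supplies enough cancellation (in conjunction with the local lower bound $\underline{v}\geq c_\Theta$) so that $a_i \in L^1(\Omega)$ suffices; the borderline $\theta_i = -1$ needs the extra integrability $s > 1$ to absorb a logarithmic-type loss via H\"older's inequality. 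This coordinated tuning between $\theta_i$ and the integrability class of $a_i$ is both the heart of the statement and the crux of the proof.
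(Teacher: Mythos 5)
The paper does not actually prove Theorem A; it imports it as Theorem 2.1 of Santos--Santos \cite{NOSSO}, so there is no in-paper proof to compare against. The only glimpse offered here is the proof of Corollary 3.1, which reveals that the argument in \cite{NOSSO} is a Picone-type contradiction built on the negativity of
$$\int_{[u\geq v]}\Big[\frac{a_1(x)u^{\theta_1}+a_2(x)u^{\theta_2}}{u^{p-1}} - \frac{a_1(x)v^{\theta_1}+a_2(x)v^{\theta_2}}{v^{p-1}}\Big](u^p-v^p)\varphi\, dx$$
for a non-negative cutoff $\varphi$. So your strategy --- $\epsilon$-shifted Picone test functions, strict monotonicity of $t\mapsto t^{\theta_i-(p-1)}$, contradiction on $E=[\underline{v}>\overline{v}]$ --- is the right shape. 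But as written, the sketch glosses over exactly the points that make the theorem nontrivial.

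Two concrete gaps. First, you assert $\psi^j_\epsilon\in W_0^{1,p}(\Omega)$ on the strength of the pointwise bound $\Phi_\epsilon/\underline{v}_\epsilon^{p-1}\le p(\underline{v}-\overline{v})^+$; this bounds the function but says nothing about its gradient. Since $\underline{v},\overline{v}$ are only in $W_{\mathrm{loc}}^{1,p}(\Omega)$, $\nabla\psi^j_\epsilon$ is not a priori globally $L^p$. The hypothesis $(\underline{v}-\eta)^+\in W_0^{1,p}(\Omega)$ controls $\nabla\underline{v}$ only on $[\underline{v}>\eta]$, whereas the support $\{\Phi_\epsilon>0\}=[\underline{v}>\overline{v}]$ carries no uniform lower bound on $\underline{v}$; moreover \emph{no} boundary-trace assumption is placed on $\overline{v}$, so there is no reason for $\psi^2_\epsilon$ to lie in $W_0^{1,p}(\Omega)$ at all. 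The cutoff $\varphi$ that appears in Corollary 3.1's displayed inequality strongly suggests that \cite{NOSSO} first localizes by compactly supported cutoffs and only then passes to the limit; your sketch omits this step entirely. Second, the phrase ``a truncation-plus-density argument, controlled by the integrability hypotheses'' compresses the whole case analysis $(h)_1$--$(h)_3$ into one sentence. Your reading of the \emph{role} of each case is correct, but a proof must actually produce the uniform-in-$\epsilon$ $L^1$ majorant, and for $\theta_i<-1$ and the borderline $\theta_i=-1$ these estimates are precisely where the interplay between the cutoff, the $\epsilon$-shift, the interior lower bound $\underline{v}\ge c_\Theta$, and the integrability of $a_i$ has to be quantified. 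As it stands, the passage to the limit by dominated convergence is not justified, and the argument remains a plan rather than a proof.
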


Following the proof of the above Theorem, we have the next result.
\begin{corollary}
\label{R1}
Assume that the same assumptions of the Theorem \ref{unicidade1}, $a_2 \leq a_1$ in $\Omega$ and $\theta_1 \leq \theta_2$ hold. If $\underline{v}, \overline{v}  \in W_{{\mathrm{loc}}}^{1,p}(\Omega)$ are subsolution and supersolution of
\[ \left\{
\begin{array}{l}
  -\Delta_p u=  a_1(x)u^{\theta_1}\chi_{[u < a]} + a_2(x)u^{\theta_2}\chi_{[u \geq a]} ~\mbox{in } \Omega,\\
    u>0    ~\mbox{in } \partial\Omega,~~
    u>0    ~\mbox{on }  \Omega,
 \end{array}
\right.
\]
respectively, with $\underline{v} \leq 0$ in $\partial \Omega$ and $0\leq a <1$, then $\underline{v} \leq \overline{v}$ a.e. in $\Omega$.
\end{corollary}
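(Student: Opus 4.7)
My approach is to transcribe the proof of Theorem~\ref{unicidade1} from \cite{NOSSO}, the only modification being to verify that the piecewise right-hand side still enjoys the $(p-1)$-sublinear monotonicity which drives that argument. Setting
$$
g(x,t) := a_1(x)\, t^{\theta_1}\chi_{[t<a]} + a_2(x)\, t^{\theta_2}\chi_{[t\geq a]}, \qquad t>0,
$$
the functions $\underline{v},\overline{v}$ are, respectively, a sub- and a supersolution of $-\Delta_p u = g(x,u)$ in the $W^{1,p}_{\mathrm{loc}}(\Omega)$ sense. The engine of the proof of Theorem~\ref{unicidade1} is the fact that $t\mapsto (a_1(x)t^{\theta_1}+a_2(x)t^{\theta_2})/t^{p-1}$ is non-increasing on $(0,\infty)$; I claim the same holds for $t\mapsto g(x,t)/t^{p-1}$ under the additional hypotheses $a_2\leq a_1$, $\theta_1\leq\theta_2$ and $0\leq a<1$.

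To verify this monotonicity I would argue on the pieces. On each of $(0,a)$ and $(a,\infty)$ we have $g(x,t)/t^{p-1} = a_i(x)\, t^{\theta_i-(p-1)}$ with $\theta_i<p-1$, so the map is non-increasing there. At the jump $t=a$ the value drops from $a_1(x)\,a^{\theta_1-(p-1)}$ to $a_2(x)\,a^{\theta_2-(p-1)}$, and the right-to-left ratio is $(a_2(x)/a_1(x))\,a^{\theta_2-\theta_1}\leq 1$, since each factor lies in $[0,1]$ by the hypotheses $a_2\leq a_1$, $\theta_2\geq\theta_1$ and $0\leq a<1$. Hence $t\mapsto g(x,t)/t^{p-1}$ is non-increasing on $(0,\infty)$ for every fixed $x\in\Omega$, strictly so on a set of positive measure whenever $a_1+a_2>0$.

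With this structural property in hand, I would rerun the Picone-type argument from \cite{NOSSO} verbatim: plug admissible truncated test functions built from $((\underline{v}^{\,p}-(\overline{v}+\varepsilon)^{p})^+)/(\underline{v}+\eta)^{p-1}$ and $((\underline{v}^{\,p}-(\overline{v}+\varepsilon)^{p})^+)/(\overline{v}+\varepsilon)^{p-1}$ into the sub- and supersolution inequalities, respectively. The boundary condition $\underline{v}\leq 0$ on $\partial\Omega$ in the sense of Definition~\ref{D1} is exactly what ensures these test functions belong to $W_0^{1,p}(\Omega)$; Picone's inequality forces the difference of the left-hand sides to be non-negative, whereas the difference of the right-hand sides reduces, after $\eta\to 0^+$, to
$$
\int_{[\underline{v}>\overline{v}+\varepsilon]}\!\!\left(\frac{g(x,\underline{v})}{\underline{v}^{\,p-1}}-\frac{g(x,\overline{v}+\varepsilon)}{(\overline{v}+\varepsilon)^{p-1}}\right)\bigl(\underline{v}^{\,p}-(\overline{v}+\varepsilon)^{p}\bigr)\,dx,
$$
which is non-positive by the monotonicity just established. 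The integrability conditions $(h)_1$--$(h)_3$ justify these (possibly singular) test integrations exactly as in \cite{NOSSO}, because $|g(x,t)|\leq a_1(x)t^{\theta_1}+a_2(x)t^{\theta_2}$ pointwise, so no new growth issue arises. Sending $\varepsilon\to 0^+$ yields $|[\underline{v}>\overline{v}]|=0$.

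The only real content beyond a mechanical rewriting of Theorem~\ref{unicidade1} is the monotonicity check at the jump $t=a$; this is the step where the hypotheses $a_2\leq a_1$, $\theta_1\leq\theta_2$ and $a<1$ are consumed, and it is precisely the obstacle one must confront because the measurable characteristic-function splitting destroys the continuity (though not the monotonicity) that the Picone comparison relies upon.
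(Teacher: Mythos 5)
Your proposal is correct and takes essentially the same route as the paper: rerun the Picone-type comparison from Theorem~\ref{unicidade1} and observe that the integral $\int_{[\underline v\geq\overline v]}\big(g(x,\underline v)/\underline v^{\,p-1}-g(x,\overline v)/\overline v^{\,p-1}\big)(\underline v^{\,p}-\overline v^{\,p})\varphi\,dx$ is still strictly negative under the contradictory assumption. The paper states this inequality without justification; your explicit verification that $t\mapsto g(x,t)/t^{p-1}$ remains non-increasing across the jump at $t=a$ (using $a_2\leq a_1$, $\theta_1\leq\theta_2$, $a<1$) supplies exactly the point the paper's terse proof leaves implicit.
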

\begin{proof}
It is sufficient to revisit the proof of Theorem \ref{unicidade1} and observe that, under the contradictory assumption $\vert[(u^p - v^p)^+\phi > 0]\vert > 0$, we also obtain
$$\displaystyle\int_{[u \geq v]}\Big[\frac{a_1(x)u^{\theta_1}\chi_{[u < a]} + a_2(x)u^{\theta_2}\chi_{[u \geq a]}}{u^{p-1}} -  \frac{a_1(x)v^{\theta_1}\chi_{[v < a]} + a_2(x)v^{\theta_2}\chi_{[v \geq a]}}{v^{p-1}}\Big](u^p - v^p)\varphi dx < 0, $$ which leads us to a similar contradiction, as in the proof of Theorem \ref{unicidade1}.
 \fim
\end{proof}

The next Lemma brings out an important parametric behavior of the solution of $(p-1)-$sublinear problem. This result is crucial in our approach.

\begin{lemma}\label{L3}
Assume that $(f_1)$ and $(f_2)$ are satisfied with $c_0, c_\infty > 0$ in $\overline{\Omega}$ and $\delta \leq \beta$. Then, there exist $\alpha_0,  \alpha_{\infty},m_1,m_2 > 0$  such that  any positive solution $u \in W_{\mathrm{loc}}^{1,p}(\Omega) $ of
\begin{equation}\label{lema3}
-\Delta_p u = \alpha f(x,u) ~~\mbox{in} ~ \Omega, ~~ u|_{\partial\Omega} = 0,
\end{equation}
(see definition $\ref{D2}$ with $A\equiv 1$) satisfies
\begin{equation}\label{SS}
\alpha^{\tau}m_1\phi_1 \leq u \leq \alpha^{\tau}m_2e^t~\mbox{in }\overline{\Omega},
\end{equation}
where $t = \min\{1, (p-1)/(p-1- \delta)\} $,
$$a) ~\tau = {1/(p-1-\delta)} ~\mbox{ for all} ~\alpha \in (0, \alpha_0) ~~~~\mbox{and} ~~~~~ b)~\tau = {1}/{(p-1-\beta)} ~~\mbox{for all} ~\alpha > \alpha_{\infty}. $$
\end{lemma}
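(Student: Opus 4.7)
The plan is to bracket $u$ above and below by explicit sub- and supersolutions of auxiliary equations of the form $-\Delta_p v = a(x)v^{\theta_1} + b(x)v^{\theta_2}$ and invoke the Comparison Principle Theorem \ref{unicidade1}. As a preliminary reduction I would use $(f_1)$, $(f_2)$, the positivity $c_0,c_\infty>0$, and the continuity of $f$ on every compact strip $\overline{\Omega}\times[t_0,t_\infty]\subset\overline{\Omega}\times(0,\infty)$ to obtain constants $0<c_1\leq C_1$ with
\[ c_1\bigl(s^\delta+s^\beta\bigr) \leq f(x,s) \leq C_1\bigl(s^\delta+s^\beta\bigr), \qquad x\in\overline{\Omega},\ s>0. \]

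For the upper bound I would take $\overline{u}=\alpha^{\tau}m_2\,e^{t}$ with $t=\min\{1,(p-1)/(p-1-\delta)\}$ and $m_2$ to be chosen. Using $-\Delta_p e=1$, a direct differentiation gives
\[ -\Delta_p e^{t} = t^{p-1}\Bigl[(1-t)(p-1)\,e^{(t-1)(p-1)-1}|\nabla e|^{p} + e^{(t-1)(p-1)}\Bigr], \]
which is nonnegative since $t\leq 1$. The crucial identity $(t-1)(p-1)=t\delta$ is valid precisely when $t=(p-1)/(p-1-\delta)<1$ (i.e.\ $\delta<0$), so the boundary decay of the second bracketed term matches $e^{t\delta}$; combined with Hopf's estimate $|\nabla e|\geq c>0$ near $\partial\Omega$ one obtains a uniform bound $-\Delta_p e^{t}\geq \kappa\,e^{t\delta}$ in $\Omega$ for some $\kappa>0$ (the non-singular case $t=1$ is trivial). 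Substituting $\overline{u}$ into the required inequality $-\Delta_p\overline{u}\geq \alpha C_1(\overline{u}^\delta+\overline{u}^\beta)$, and dividing by the common power of $\alpha$ absorbed by the choice of $\tau$, one arrives at
\[ m_2^{p-1}(-\Delta_p e^{t}) \geq C_1 m_2^{\delta} e^{t\delta} + \alpha^{\sigma}C_1 m_2^{\beta} e^{t\beta}, \]
with $\sigma=(\beta-\delta)/(p-1-\delta)\geq 0$ in case (a) and $\sigma=(\delta-\beta)/(p-1-\beta)\leq 0$ in case (b). In (a) I pick $m_2$ large enough to absorb the first term and then $\alpha_0>0$ small so the second (positive-exponent) term is negligible; in (b), large $m_2$ handles the $\beta$-term and large $\alpha_\infty$ kills the $\alpha^\sigma$-term.

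For the lower bound I would take $\underline{u}=\alpha^\tau m_1\phi_1$. Since $-\Delta_p\underline{u}=\lambda_1\alpha^{\tau(p-1)}m_1^{p-1}\phi_1^{p-1}$ and both $p-1-\delta,\,p-1-\beta>0$, the functions $\phi_1^{p-1-\delta}$ and $\phi_1^{p-1-\beta}$ are bounded on $\overline{\Omega}$. Dropping the non-matching (positive) term in the lower envelope, the subsolution inequality $-\Delta_p\underline{u}\leq\alpha c_1(\underline{u}^\delta+\underline{u}^\beta)$ reduces, after canceling matched powers of $\alpha$, to $\lambda_1 m_1^{p-1-\delta}\|\phi_1\|_\infty^{p-1-\delta}\leq c_1$ in case (a) (resp.\ with $\beta$ in place of $\delta$ in case (b)), and is therefore satisfied once $m_1$ is small enough, uniformly in $\alpha>0$.

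To conclude I would apply Theorem \ref{unicidade1} twice. The positive solution $u$ is a subsolution of $-\Delta_p v=\alpha C_1 v^{\delta}+\alpha C_1 v^{\beta}$ and $\overline{u}$ is a supersolution of the same equation; both vanish on $\partial\Omega$ in the sense of Definition \ref{D1}, and the integrability conditions $(h)_1$--$(h)_3$ are trivially satisfied since the coefficients are constants on the bounded domain. This yields $u\leq\overline{u}$ a.e.\ in $\Omega$, and the symmetric argument with the $c_1$-equation yields $\underline{u}\leq u$, establishing \eqref{SS}. The delicate point in the whole argument is the identification of the exponent $t=\min\{1,(p-1)/(p-1-\delta)\}$ together with the matching identity $(t-1)(p-1)=t\delta$ when $\delta<0$; without it, the boundary blow-up of $-\Delta_p e^{t}$ cannot absorb the singular term $e^{t\delta}$ on the right-hand side of the supersolution inequality, and the whole construction collapses.
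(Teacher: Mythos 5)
Your proof is correct and follows the paper's strategy closely: derive a two\-/sided power-law envelope for $f$, build the explicit barriers $\alpha^\tau m_1\phi_1$ and $\alpha^\tau m_2 e^t$, match exponents of $\alpha$ according to which power dominates, and close by the $W^{1,p}_{\mathrm{loc}}$-Comparison Principle. The one genuine divergence from the paper is in the lower envelope. The paper bounds $f$ from below by $m\bigl(s^{\delta}\chi_{[s<a]}+s^{\beta}\chi_{[s\geq a]}\bigr)$, a form not covered by Theorem~\ref{unicidade1}, and therefore needs the separate Corollary~\ref{R1}. You observe instead that $(f_1)$, $(f_2)$ and the positivity $c_0,c_\infty>0$ already yield the cleaner bound $c_1\bigl(s^{\delta}+s^{\beta}\bigr)\leq f(x,s)$ (since $s^{\delta}$ dominates the sum near $0$ and $s^{\beta}$ near $\infty$, with $\delta\leq\beta$), and then verify the subsolution inequality for $\alpha^\tau m_1\phi_1$ by simply dropping the unmatched power on the right. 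This lets you invoke Theorem~\ref{unicidade1} twice and dispense with Corollary~\ref{R1} entirely, which is a tidier route and buys exactly that economy. Two minor remarks on presentation: (i) the appeal to Hopf's lemma is superfluous --- under the identity $(t-1)(p-1)=t\delta$ the second (boundary-regular) term in your expansion already gives $-\Delta_p e^{t}\geq t^{p-1}e^{t\delta}$ globally, which is precisely the bound the paper extracts from its weak-formulation computation; and (ii) since $e$ is only $C^{1,\alpha}(\overline{\Omega})$ the pointwise expansion of $-\Delta_p e^{t}$ should strictly be replaced by the distributional version (test against $0\leq\varphi\in C_c^\infty$, integrate by parts, and discard the sign-definite extra term), but this is cosmetic.
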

\begin{proof} Let $u \in W_{\mathrm{loc}}^{1,p}(\Omega)\cap C(\overline{\Omega}) $ be a solution of (\ref{lema3}). It follows from $(f_1)$ and $(f_2)$ that there exist $m, M > 0$ such that
$$
m\Big(u^{\delta}\chi_{[u < a]} + u^{\beta}\chi_{[u \geq a]} \Big) \leq f(x,u) \le M\Big(u^{\delta} + u^{\beta}\Big)
$$
holds for some  $0 < a <1$ small enough, that is,  $u \in W_{\mathrm{loc}}^{1,p}(\Omega)\cap C(\overline{\Omega})$ is a subsolution for
\begin{equation}\label{sub}
-\Delta_p u = \alpha M \Big(u^{\delta} + u^{\beta}\Big)
\end{equation}
and a supersolution for
\begin{equation}\label{super}
-\Delta_p u = \alpha m \Big(u^{\delta}\chi_{[u < a]} + u^{\beta}\chi_{[u \geq a]}\Big).
\end{equation}

Now, we build  a positive  supersolution for (\ref{sub}) and a positive subsolution for (\ref{super}), as required by Theorem \ref{unicidade1}. First, let us define $\overline{u}_\alpha = m_2\alpha^\tau e^t$,  $\alpha>0$, with $t = \min\{1, {(p-1)}/{(p-1-\delta)} \}$ and $\tau, m_2>0$ being constants independent of $\alpha$, to be chosen later. Thus, using that $0 < t \leq 1,$ we have
\begin{eqnarray*}
\displaystyle\int_{\Omega}|\nabla \overline{u}_{\alpha}|^{p-2}\nabla \overline{u}_{\alpha}\nabla \varphi dx {\geq} \displaystyle\int_{\Omega} |\nabla e|^{p-2}\nabla e\nabla\Big[\varphi(\alpha^\tau m_2e^{t-1}t)^{p-1}\Big]dx =  \displaystyle\int_{\Omega} \varphi(\alpha^\tau m_2e^{t-1}t)^{p-1}dx
\end{eqnarray*}
for each $0 \leq \varphi \in C_c^{\infty}(\Omega)$ given.

To verify that $ \overline{u}_{\alpha}$  is a supersolution for (\ref{sub}),  it is enough to show that
\begin{equation}\label{C}
(\alpha^\tau m_2t)^{p-1} \geq \alpha M\max\{1, \|e^{t(\beta -\delta)}\|_{\infty}\}\Big(m_2^{\delta}\alpha^{\tau\delta} + m_2^{\beta}\alpha^{\tau\beta}\Big)
\end{equation}
holds, for some appropriately chosen $\tau, m_2>0$.

To do this, let us fix $m_2 = \max\Big\{1, \Big(\frac{3M\max\{1, \|e^{t(\beta-\delta)}\|_\infty\}}{t^{p-1}}\Big)^{1/(p-1-\beta)}\Big\}$ and consider two cases on the size of $\alpha$. If $\alpha<1$,  we obtain that the inequality (\ref{C}) holds by choosing  $\tau = {1}/(p-1-\delta)$, while for $\alpha \geq 1$  we obtain  (\ref{C}) by taking $\tau = {1}/{(p-1- \beta)}$. Therefore, in both cases $\overline{u}_{\alpha}$ is a supersolution for (\ref{sub}) for every $\alpha >0$.

Next, we build a subsolution for (\ref{super}) as follows. Setting $ \underline{u}_{\alpha} = \alpha^\tau m_1 \phi_1 $, $\alpha>0$, we have that  $\underline{u}_{\alpha}$ will  be a subsolution for (\ref{super}) if
\begin{equation}\label{D}
 (m_1\alpha^\tau)^{(p-1)}\lambda_1\phi_1^{p-1} \leq \alpha m \Big(m_1^{\delta}\alpha^{\tau\delta}\phi_1^{\delta}\chi_{[m_1\alpha^\tau
 \phi_1 < a]} +m_1^{\beta}\alpha^{\tau\beta}\phi_1^{\beta}\chi_{[m_1\alpha^\tau\phi_1 \geq a]} \Big)
 \end{equation}
is satisfied, for some $\tau,m_1>0$ independent of $\alpha$.

Again, let us consider two cases on $\alpha$. First, let  $0 < \alpha <   \lambda_1 a^{p-1-\delta}/m$. By taking  $\tau= 1/(p-1 - \delta)$ and $m_1 = \Big(m/\lambda_1\|\phi_1^{1/\tau}\|_{\infty}\Big)^{\tau} = m^{1/(p-1-\delta)}/(\|\phi_1\|_\infty \lambda_1^{1/(p-1-\delta)}) $, the inequality (\ref {D}) holds. On the other hand, for $\alpha \geq \lambda_1 a^{p-1-\delta}/m$, let us take $\tau= 1/(p-1 - \beta)$ and $m_1 = \Big(m/\lambda_1\|\phi_1^{1/\tau}\|_{\infty}\Big)^{\tau} = m^{1/(p-1-\beta)}/(\|\phi_1\|_\infty \lambda_1^{1/(p-1-\beta)}) $ to obtain the inequality (\ref{D}) again. Therefore, in both cases, we have  that  $\underline{u}_\alpha$  is a subsolution of (\ref{super}) for each $\alpha>0 $ given.

Fix
$$\alpha_0 = \min\Big\{ 1, \frac{\lambda_1a^{p-1-\delta}}{m}\Big\} ~~\mbox{and} ~~\alpha_\infty = \max\Big\{1, \frac{\lambda_1a^{p-1-\delta}}{m}\Big\}.$$
Now, using $u$ as a subsolution of (\ref{sub}) and $\overline{u}_{\alpha} = \alpha^\tau m_2e^t$ as a supersolution of (\ref{sub}), for $\tau = 1/(p-1-\delta)$ and $\alpha < \alpha_0$, together with Theorem \ref{unicidade1}, we get the second inequality in the item$-a)$.

Moreover, using $u$ as a supersolution of (\ref{super}) and $\underline{u}_\alpha = \alpha^\tau m_1\phi_1$ as a subsolution of (\ref{super}), for $\tau = 1/(p-1-\delta)$ and $\alpha < \alpha_0$, together with Corollary \ref{R1}, we get the first inequality in item$-a)$.

Similarly, for $\alpha > \alpha_\infty$ and $\tau = 1/(p-1-\beta)$, arguing as before we get the both inequalities in item$-b)$.

\fim
\end{proof}

As immediate consequence of the proof of the previous Lemma, we have the following Corollary.
\begin{corollary} \label{C1}
Assume that $-\infty < \delta \leq   \beta <p-1$. If there exist $M,m>0$ and $0<u,v \in W_{\mathrm{loc}}^{1,p}(\Omega)\cap C(\overline{\Omega})$ such that:
\begin{enumerate}
\item[$(i)$] the inequality
\begin{equation}\label{in1}
-\Delta_p u \leq \alpha M (u^{\delta} + u^{\beta}) ~\mbox{in} ~\Omega ~\mbox{and} ~u \leq 0 ~ \mbox{on} ~ \partial \Omega
\end{equation}
holds, then $u$ satisfies the second inequality in (\ref{SS}), for some $m_2$ independent of $\alpha>0$, where $\tau$ is given in the items $a)-b)$ of the Lemma \ref{L3}. In particular, if $u$ satisfies $-\Delta_p u \leq  {L} (u^{\delta} + u^{\beta})$  for some $L>0$ and $u \leq 0$ on $\partial \Omega$, then $\Vert u \Vert_{\infty} \leq C(L)$,
\item[$(ii)$] the inequality
\begin{equation}\label{in2}
-\Delta_p v \geq \alpha m(v^{\delta}\chi_{[v < a]} + v^{\beta}\chi_{[v \geq a]}) ~\mbox{in} ~\Omega
\end{equation}
holds for some $0 < a <1$, then $v$ satisfies the first inequality in (\ref{SS}), for some $m_1$ independent of $\alpha>0$, where $\tau$ is given in the items $a)-b)$  of the Lemma \ref{L3}.
\end{enumerate}
\end{corollary}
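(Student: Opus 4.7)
The plan is to extract the conclusion from the very construction carried out in the proof of Lemma \ref{L3}. Recall that in that proof two explicit barriers were built: $\overline{u}_\alpha = m_2\alpha^\tau e^t$, which was shown to satisfy the reversed inequality in \eqref{3.9} with $a_1(x) = a_2(x) = \alpha M$, $\theta_1 = \delta$, $\theta_2 = \beta$; and $\underline{u}_\alpha = m_1\alpha^\tau \phi_1$, which was shown to satisfy the subsolution inequality with $a_1(x) = \alpha m\,\chi_{[\underline{u}_\alpha<a]}$, $a_2(x) = \alpha m\,\chi_{[\underline{u}_\alpha\geq a]}$. The constants $m_1,m_2$ were chosen independently of $\alpha$; the exponent $\tau$ was $1/(p-1-\delta)$ for $\alpha<\alpha_0$ and $1/(p-1-\beta)$ for $\alpha>\alpha_\infty$. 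That construction made no use of the fact that $u$ arose as a solution of \eqref{lema3}; it only used that $u$ was a sub/supersolution of \eqref{sub}/\eqref{super}. So the barriers from the proof of Lemma \ref{L3} are available here.

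For item $(i)$, the function $u$ satisfying \eqref{in1} is, by definition, a subsolution of the very problem \eqref{sub} for which $\overline{u}_\alpha$ is a supersolution, and $u\leq 0$ on $\partial\Omega$. Since $\delta\leq\beta<p-1$, the coefficients (which are constants here) automatically meet every integrability demand of hypotheses $(h)_1$--$(h)_3$ in Theorem \ref{unicidade1}, so I may apply the $W^{1,p}_{\mathrm{loc}}$-Comparison Principle to conclude $u\leq \overline{u}_\alpha = \alpha^\tau m_2 e^t$ in $\Omega$, which is the second inequality of \eqref{SS}. For the "in particular" clause, given $-\Delta_p u \leq L(u^\delta + u^\beta)$ with $u\leq 0$ on $\partial\Omega$, pick any convenient decomposition $L=\alpha M$ with $\alpha$ fixed (say $\alpha=1$, $M=L$); then the already-proved bound yields $\|u\|_\infty \leq m_2\|e^t\|_\infty =: C(L)$.

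For item $(ii)$, the hypothesis \eqref{in2} says exactly that $v$ is a supersolution of \eqref{super}, while the construction recalled above produces the subsolution $\underline{u}_\alpha = m_1\alpha^\tau\phi_1$, which vanishes on $\partial\Omega$ and therefore is $\leq 0$ there in the sense of Definition \ref{D1}. The right-hand side of \eqref{super} has the truncated form treated in Corollary \ref{R1}, so I apply that corollary (in place of Theorem \ref{unicidade1}) to obtain $\underline{u}_\alpha \leq v$ a.e.\ in $\Omega$, which is the first inequality of \eqref{SS}.

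The only subtle point, and the one I would verify carefully, is that the comparison principle really applies with these particular right-hand sides: the coefficients $\alpha M$ and $\alpha m\,\chi_{\{\cdots\}}$ are bounded functions on $\Omega$, so they lie in every $L^s(\Omega)$, and the exponents $\delta,\beta$ are both strictly less than $p-1$; hence one of $(h)_1,(h)_2,(h)_3$ is met in each case and Theorem \ref{unicidade1}/Corollary \ref{R1} indeed applies. Everything else is a direct reading of the barrier construction already in the proof of Lemma \ref{L3}, with the observation that the constants $m_1,m_2$ were independent of $\alpha$ by design.
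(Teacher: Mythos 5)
Your proof follows the paper's own reasoning for the substantive parts of items $(i)$ and $(ii)$: both observe that the barriers $\overline{u}_\alpha$, $\underline{u}_\alpha$ built in the proof of Lemma~\ref{L3} use only the one-sided inequalities \eqref{in1}, \eqref{in2} together with Theorem~\ref{unicidade1} and Corollary~\ref{R1}, never the fact that $u$ solves \eqref{lema3}, and so transfer verbatim; this is exactly why the paper writes out only the ``in particular'' clause. For that clause your decomposition $\alpha=1$, $M=L$ reaches the same conclusion but is a bit less tidy than the paper's $\alpha=L$, $M=1$ (after the WLOG enlargement $L>\alpha_\infty$). Two small points: first, $\alpha=1$ generally lies in the excluded middle $[\alpha_0,\alpha_\infty]$, so you cannot literally cite the stated range of the second inequality in \eqref{SS} but must re-enter Lemma~\ref{L3}'s supersolution construction, which does handle $\alpha\ge 1$ with $\tau=1/(p-1-\beta)$; second, with $M=L$ the constant $m_2$ in the explicit formula from that proof depends on $L$, a dependence you leave tacit though it is what makes $C(L)$ honest notation. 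The paper's choice keeps $m_2$ a fixed universal constant and places $\alpha=L$ squarely in the range of Lemma~\ref{L3}-b), so that $\|u\|_\infty\le m_2 L^{1/(p-1-\beta)}\|e^t\|_\infty$ is immediate.
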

\begin{proof}
{It remains only to prove the particular case in item $i)$. Without loss of generality, we can assume that $L > \alpha_\infty$. Thus, by identifying $\alpha = L$ and $M =1$ in (\ref{in1}), it follows from the first part of the proof of the above Lemma that  $u \leq m_2L^{1/(p-1- \beta)}e^t$, where $m_2 = \max\Big\{1, \Big(\frac{3\max\{1, \|e^{t(\beta-\delta)}\|_\infty\}}{t^{p-1}}\}\Big)^{1/(p-1-\beta)}\Big\}$. Therefore, $\|u\|_{\infty}  \leq   m_2L^{1/(p-1- \beta)}\|e^t\|_\infty := C(L). $ }
\fim
\end{proof}
\medskip

\section{Proof of Theorem \ref{T2} and \ref{T4}}

In this section, we will prove Theorems \ref{T2} and \ref{T4}. We also prove an existence and non-existence result for the degenerate problem ( i.e. $A(x,0)=0$ in $\Omega$ ) in Theorem \ref{T5}. We begin with Theorem \ref{T2}.\\

\noindent {\textbf{\textit{Proof of Theorem \ref{T2}:}}} First, we note that under the hypotheses $(A_0)$ and $(f_2)$, we are able to apply Theorem $\ref{T1}$ to guarantee the existence of an unbounded \textit{continuum} $\Sigma$ of positive $W_{\mathrm{loc}}^{1,p}(\Omega)\cap C(\overline{\Omega})$-solutions for $(P)$.
\begin{itemize}
\item[a)]
 Let us prove just the case $\{$$\theta{\gamma} = p-1-\beta$ and $ (A'_\infty)$$\}$, because the other one is similar. Assume by contradiction that $\Sigma$ is horizontally bounded. Then, there exists a sequence $(\lambda_n, u_n) \subset \Sigma $ and  $0 < \lambda^* <\infty$  such that $\lambda_n \leq \lambda^* $ and $\|u_n\|_{\infty} \rightarrow \infty$.
We claim that $\int_{\Omega}u_n^{\gamma}dx \rightarrow \infty$. Otherwise, it would follow from   $(A_0)$, $(f_1)$ and $(f_2)$ that
  $$-\Delta_p u_n \leq L\Big(u_n^{\delta} + u_n^{\beta}\Big) $$ holds, up to a subsequence, for some $L> 0$ independent of $n$. Using this information  and  Corollary \ref{C1}$-i)$, we obtain $\|u_n\|_{\infty}  \leq C(L)$ but this is a contradiction with the fact that  $\|u_n\|_{\infty} \rightarrow \infty$.

Now, for $t=\displaystyle \min\{1, (p-1)/(p-1-\delta)\}$, fix $ m_2 \in  (0, ~\min\{1, (\int_{\Omega} e^{t\gamma}dx )^{-1/{\gamma}}\})$ and $C_1>0$ such that
\begin{equation}\label{AB}
\frac{\lambda^*}{C_1} \leq \frac{m_2^{p-1-\delta }t^{p-1}}{2\max\{1, \|e\|_{\infty}^{t|\beta - \delta|}\}}.
\end{equation}

First, we note that as a consequence of  $\int_{\Omega} u_n^{\gamma}dx \rightarrow \infty$ and the hypothesis ($A'_\infty$), for $n$ large we have $A\Big(x, \int_{\Omega} u_n^{\gamma}dx\Big)\Big(\int_{\Omega}u_n^{\gamma}dx \Big)^{\theta} \geq C_1 > 0 $ which leads us to
$$
-\Delta_p u_n = \frac{\lambda_n(\int_{\Omega}u_n^{\gamma}dx )^{\theta}f(x, u_n)}{A\Big(x, \int_{\Omega} u_n^{\gamma}dx\Big)\Big(\int_{\Omega}u_n^{\gamma}dx \Big)^{\theta}} \leq {\frac{\lambda^*}{C_1}}\tilde{\lambda}_n \Big(u_n^{\delta} + u_n^{\beta}\Big),
$$
 where $\tilde{\lambda}_n = \Big(\int_{\Omega}u_n^{\gamma}dx \Big)^{\theta} $.

Next, let us define $\overline{u}_n = m_2\tilde{\lambda}_n^{\tau}, $ with $\tau = (p-1-\beta)^{-1}$.  By  proceeding  as in the proof of Lemma \ref{L3}$-b)$ and using  (\ref{AB}), we have
\[
-\Delta_p\overline{u}_n \geq {\frac{\lambda^*}{C1}}\tilde{\lambda}_n \Big(\overline{u}_n^{\delta} + \overline{u}_n^{\beta}\Big)
\]
for $n$ sufficiently large.

Therefore, by Theorem \ref{unicidade1} we obtain  $u_n \leq m_2\Big(\int_{\Omega}u_n^{\gamma}\Big)^{\theta \tau}e^t $, which results in
$$\int_{\Omega}u_n^{\gamma}dx \leq \Big(\int_{\Omega}u_n^{\gamma}dx\Big)^{\theta \tau \gamma}m_2^{\gamma}\int_{\Omega}e^{t\gamma}dx.$$
As $\theta\gamma = p-1-\beta$, it follows from the previous inequality that  $1 \leq m_2^{\gamma}\int_{\Omega}e^{t\gamma}dx$, but this is a contradiction by our choice of  $m_2 <  (\int_{\Omega} e^{t\gamma}dx )^{-1/{\gamma}}$.

\item[b)] Assume that there exists a sequence $ (\lambda_n, u_n) $ of solutions of $(P)$ such that $\lambda_n \rightarrow \infty$. We claim that $ \int_{\Omega} u_n^{\gamma}dx \rightarrow \infty $. Otherwise, by the hypotheses  $(f_1)$ and $(f_2)$ there exist  constants $C_1>0$ and $ 0 < a < 1$ such that
\begin{equation}\label{fg}
-\Delta_p u_n \geq C_1\lambda_n \Big(u_n^{\delta}\chi_{[u_n < a]} + u_n^{\beta}\chi_{[u_n \geq a]}\Big)
\end{equation}
holds, up to a subsequence. Thus, we obtain from  (\ref{fg}) and  Corollary \ref{C1}$-ii)$ that $\lambda_n^\tau m_1\phi_1 \leq u_n$
 for some $m_1 > 0$ independent of $n$, $\tau = (p-1-\beta)^{-1}$ and $n$  large enough.  Hence, from this we get  $C \geq  \int_\Omega u_n^{\gamma}dx \geq \lambda_n^{\tau\gamma}\int_{\Omega}\phi_1^{\gamma}dx \rightarrow \infty, $ which is a contradiction.

From the above claim and the hypothesis $0 \leq a_\infty <\infty$ on $\overline{\Omega}$, we obtain  $$A\Big(x, \int_{\Omega}u_n^{\gamma}dx \Big)\Big(\int_{\Omega}u_n^{\gamma}dx\Big)^{\theta} \leq C_{2}$$ for some constant $C_{2}>0$  and, as a consequence of this, we have
\[
-\Delta_p u_n \geq C_{3}\lambda_n\Big(\int_\Omega u_n^{\gamma}dx\Big)^{\theta}\Big(u_n^{\delta}\chi_{[u_n < a]} +  u_n^{\beta}\chi_{[u_n \geq a]}\Big)
\]
 for some $C_3>0$ indenpendent of $n$.

Now, by taking  $m = C_3$ and $\alpha = \lambda_n\Big(\int_\Omega u_n^{\gamma}dx\Big)^{\theta}$ in (\ref{in2}), it follows from Corollary \ref{C1}$-ii)$ that $ \lambda_n^\tau\Big(\int_{\Omega} u_n^{\gamma}dx\Big)^{\tau\theta}m_1\phi_1 \leq u_n$, for some $m_1 > 0$ independent of $n$, $\tau = (p-1-\beta)^{-1}$ and  $n$ sufficiently large. Thus, we conclude that
$ \lambda_n^{\gamma \tau} \leq C_{4}\Big(\displaystyle\int_{\Omega}u_n^{\gamma}dx\Big)^{1- \tau\theta\gamma}  = C_4 $ for some $C_4 > 0$, where in the last equality we used $\tau \theta\gamma= 1$. But this is a contradiction, since $\gamma \tau > 0$ and $\lambda_n \rightarrow \infty. $

Below, let us prove the items $i) - ii)$.
\begin{itemize}
\item[$i)$] Assume that there exists a sequence $(\lambda_n, u_n) \subset \Sigma$ such that $\lambda_n \rightarrow 0 $ and $\|u_n\|_{\infty} \rightarrow \infty$. In the same way as
proved  in the item $(a) $ above, we get $\int_{\Omega}u_n^{\gamma}dx \to \infty$. Using this fact and the hypothesis  $a_\infty > 0$ in $\overline{\Omega}$, we obtain  \begin{equation}\label{Z}
-\Delta_p u_n \leq C_1\lambda_n \Big(\int_{\Omega}u_n^{\gamma}dx \Big)^{\theta}(u_n^\delta + u_n^\beta),
\end{equation}
which implies that $\lambda_n \Big(\int_{\Omega}u_n^{\gamma}dx \Big)^{\theta} \to \infty$. If not, we would have $ C_1\lambda_n \Big(\int_{\Omega}u_n^{\gamma}dx \Big)^{\theta} \leq C_2$ for some $C_2$ large, hence  by  Corollary \ref{C1}$-i)$ we get $\|u_n\|_\infty \leq C( C_2)$. However, this is  a contradiction because we are supposing that $\|u_n\|_{\infty} \rightarrow \infty$.

 Therefore, by taking $M = C_1$ and $\alpha = \lambda_n\Big(\int_\Omega u_n^{\gamma}dx\Big)^{\theta}$ in (\ref{in1}) and applying Corollary \ref{C1}$-i)$, we get $u_n \leq m_2\lambda_n^{\tau}\Big(\int_{\Omega}u_n^{\gamma}dx \Big)^{\tau\theta}e^t$ for some $m_2$ independent of $n$, $\tau = (p-1-\beta)^{-1}$ and $n$ large enough,  which  lead us to conclude that $1 = \Big(\int_{\Omega}u_n^{\gamma}dx \Big)^{1 - \tau\theta\gamma}\leq C\lambda_n^{\tau\gamma } \rightarrow 0$ by the choice of $\theta$. This is impossible.

\item[$ii)$] Assume that there exists a sequence $(\lambda_n, u_n) \subset \Sigma$ such that $\lambda_n \rightarrow \lambda^* > 0$ and $\|u_n\|_{\infty} \rightarrow \infty.  $ Then, by the same idea as used to prove the item (a) above, we have that $\int_\Omega u_n^{\gamma}dx \to \infty$. Thus, for a given  $\epsilon> 0$, we obtain from  the hypothesis $ a_\infty \equiv 0 $ that $0 < \lambda^*/2  < \lambda_n $ and $A\Big(x, \int_{\Omega} u_n^{\gamma}dx\Big)\Big(\int_{\Omega} u_n^{\gamma}dx\Big)^{\theta} < \epsilon$ for all $n $  large as much as necessary. From this we obtain  that $-\Delta_p u_n \geq \frac{\lambda^*C_1}{2\epsilon}\Big(\int_\Omega u_n^{\gamma}dx\Big)^{\theta}(u_n^{\delta}\chi_{[u_n \leq a]} + u_n^{\beta}\chi_{[u_n > a]}) $, for some $C_1$ independent of  $n$ and $\epsilon>0$.

Hence, taking $m=C_1$ and $\alpha = \frac{\lambda^*}{2\epsilon}\Big(\int_\Omega u_n^{\gamma}dx\Big)^{\theta}$ in (\ref{in2}),  we get by the Corollary \ref{C1}$-ii)$ that $\Big(\frac{\lambda^*}{2\epsilon}\Big)^\tau\Big(\int_{\Omega} u_n^{\gamma}dx\Big)^{\theta \tau}m_1\phi_1 \leq u_n$  for some $m_1$ independent of $n$,  $\tau = (p-1-\beta)^{-1}$ and $n$ large. As a consequence of this information and by $\theta\gamma \geq p-1-\beta$, we obtain
$1 \geq \Big(\int_{\Omega} u_n^{\gamma}dx\Big)^{1 - \tau\gamma\theta } \geq \frac{C}{\epsilon^\tau},  $ which is an absurd for $\epsilon>0$ small enough, as $C$ is independent of $\epsilon$.
\end{itemize}

\item[c)] Assume that there exists a pair $(\lambda_n, u_n)$ which solves $(P)$ with $\lambda_n \rightarrow 0^+$. Then it must occurs that $\int_{\Omega}u_n^{\gamma}dx \rightarrow \infty$, otherwise
\[
-\Delta_p u_n \leq C_{1}\lambda_n\Big(u_n^{\delta} + u_n^{\beta}\Big)
\]
holds, up to subsequence. By taking $M = C_1$ and $\alpha = \lambda_n$ in (\ref{in1}), we get by Corollary \ref{C1}$-i)$ that $u_n \leq m_2\lambda_n^\tau e^t$ for some $m_2$ independent of $n$,  $\tau = (p-1-\delta)^{-1}$ and $t$ as defined before.  As a consequence of this fact and  $-1 < \gamma < 0$, we have  $C \geq  \int_{\Omega}u_n^{\gamma}dx \geq m_2^{\gamma}\lambda_n^{\gamma \tau}\int_{\Omega} e^{t\gamma}dx \rightarrow \infty$, which is an absurd. Therefore, $\int_\Omega  u_n^{\gamma}dx \to \infty$ which implies   $\lambda_n\Big(\int_{\Omega}u_n^{\gamma}dx\Big)^{\theta} \rightarrow 0$, since  $ \theta < 0 $.

 Hence, by using this information together with the hypotheses on $A$, we obtain
$$-\Delta_p u_n \leq C_2 \lambda_n \Big(\int_{\Omega}u_n^{\gamma}dx\Big)^{\theta}(u_n^{\delta} + u_n^{\beta})$$
for some $C_2$ independent of $n$.

Next, by fixing $M = C_2$ and $\alpha = \lambda_n \Big(\int_{\Omega}u_n^{\gamma}dx\Big)^{\theta}$ in (\ref{in1}), we obtain by Corollary \ref{C1}$-i)$ that $u_n \leq m_2\lambda_n^\tau\Big(\int_{\Omega}u_n^{\gamma}dx\Big)^{\theta \tau}  e^t$ for $\tau = (p-1-\delta)^{-1}$, for some $m_2 > 0$ independent of $n$ and for $n$ appropriately large. Therefore, for the choice of $\theta$, we have  $C_3 \geq C_3\Big(\int_{\Omega}u_n^{\gamma}dx \Big)^{1-\tau\theta\gamma} \geq \lambda_n^{\tau\gamma}\rightarrow \infty$ for some $C_3>0$, which leads us to a contradiction again.
\end{itemize}
This ends the proof of Theorem.
\fim
\medskip

To prove Theorem \ref{T4},  let us take advantage of Theorem \ref{T1} to get an unbounded  \textit{continuum} $\Sigma_0$ of positive $W_{\mathrm{loc}}^{1,p}(\Omega)\cap C(\overline{\Omega})$-solutions of
\[
 \left\{
\begin{array}{l}
-\Delta_pu = {\alpha }f(x,u)  ~ \mbox{in } \Omega,\\
    u>0 ~ \mbox{in }\Omega,~~
    u=0  ~ \mbox{on }\partial\Omega,
\end{array}
\right.
\]
  with $Proj_{\mathbb{R}^+}\Sigma_0 = (0, \infty) $. This allows us to  define an appropriated map $H_{\lambda}$ on $\Sigma_0$ such that its zeros are connected with  the solutions of (\ref{autonomo}). More precisely, a pair $(\lambda, u)\in  (0,\infty)\times W_{\mathrm{loc}}^{1,p}(\Omega) \cap C(\overline{\Omega})$ is a solution of (\ref{autonomo}) if and only if  $(\alpha, u) \in \Sigma_0$ with $\alpha = \lambda\Big[A\Big(\displaystyle\int_\Omega  u^{\gamma}dx \Big)\Big]^{-1}$,
which is equivalent to the pair $(\alpha, u) \in \Sigma_0$  being a zero of the map
$$ H_{\lambda}(\alpha, u) = \alpha - \lambda\Big[A\Big(\displaystyle\int_\Omega u^{\gamma}dx \Big)\Big]^{-1} = \Big(\Psi(\alpha, u) - \lambda\Big)\Big[A\Big(\displaystyle\int_\Omega u^{\gamma}dx \Big)\Big]^{-1},~(\alpha,u) \in \Sigma_0 ,$$
where $\Psi(\alpha, u) = \alpha A\Big(\displaystyle\int_\Omega u^{\gamma}dx \Big) $.

Now, we prove the next proposition, which assists us to prove a global existence result for (\ref{autonomo}).
\begin{proposition}
Assume that $- 1 < \gamma < 0$ and  $(A_0)$.  If
  \begin{equation}\label{G}
\displaystyle\limsup_{\stackrel{\alpha \rightarrow 0^+}{(\alpha, u) \in \Sigma_0}} \Psi(\alpha, u) = \infty ~~~\mbox{and} ~~~ \displaystyle\limsup_{\stackrel{\alpha \rightarrow \infty}{(\alpha, u) \in \Sigma_0}} \Psi(\alpha, u) = \infty
\end{equation}
hold,
then there exists a $\lambda^* > 0$ such that $(\ref{autonomo})$ has at least one solution for each  $\lambda \in [\lambda^*, \infty)$ and no solution for $\lambda < \lambda^* $.
\end{proposition}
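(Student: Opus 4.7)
The plan is to recast the existence problem for (\ref{autonomo}) as a level-set problem on the continuum $\Sigma_0$. Using the bijection set up right before the statement, $(\lambda, u)$ solves (\ref{autonomo}) if and only if $(\alpha, u) \in \Sigma_0$ satisfies $\Psi(\alpha, u) = \lambda$; hence the $\lambda$-values for which (\ref{autonomo}) has a solution form precisely the image $\Psi(\Sigma_0) \subset (0, \infty)$, and the conclusion is equivalent to proving $\Psi(\Sigma_0) = [\lambda^*, \infty)$ for some $\lambda^* > 0$.

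First, I would verify that $\Psi$ is well-defined and continuous on $\Sigma_0$. Every $(\alpha, u) \in \Sigma_0$ inherits a lower bound of the form $u \geq c(\alpha)\phi_1$ in $\Omega$ coming from the construction in Theorem \ref{T1} (cf.\ (\ref{10})); since $-1 < \gamma < 0$ and $\phi_1 \sim \mathrm{dist}(\cdot, \partial\Omega)$ near $\partial\Omega$, the function $\phi_1^\gamma$ lies in $L^1(\Omega)$, making $\int_\Omega u^\gamma dx$ finite and, by dominated convergence over bounded $\alpha$-ranges, continuous in $(\alpha, u)$. Composition with the continuous $A$ then gives continuity of $\Psi\colon \Sigma_0 \to (0, \infty)$. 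Being the continuous image of the connected set $\Sigma_0$, the set $\Psi(\Sigma_0)$ is an interval $I$, which by hypothesis (\ref{G}) is unbounded above.

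The core and principal obstacle is to prove that $\lambda^* := \inf I > 0$ and that this infimum is attained. The strategy is to show that, for every $M > 0$, the sub-level set $S_M := \{(\alpha, u) \in \Sigma_0 : \Psi(\alpha, u) \leq M\}$ is compact in $(0, \infty) \times C(\overline{\Omega})$. The two conditions in (\ref{G}), combined with the connectedness of $\Sigma_0$ and its emanation from $(0,0)$, force $S_M$ to lie within a strip $\{a_1(M) \leq \alpha \leq a_2(M)\}$ with $a_1(M) > 0$ and $a_2(M) < \infty$: indeed, if some sequence in $S_M$ had $\alpha_n \to 0^+$ or $\alpha_n \to \infty$, one could connect it within $\Sigma_0$ to the arbitrarily high-$\Psi$ points guaranteed by (\ref{G}), and by continuity of $\Psi$ along the connecting sub-continua, $\Psi$ would have to exceed $M$ en route, contradicting $(\alpha_n,u_n) \in S_M$. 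With the $\alpha$-component of $S_M$ thus bounded, the a-priori estimates underlying Claim 2 of Section 2 make the $u$-fibre precompact in $C(\overline{\Omega})$, and closedness of $\Sigma_0$ in $\mathbb{R} \times C(\overline{\Omega})$ makes $S_M$ closed, hence compact. A minimizing sequence for $\Psi$ on $\Sigma_0$ therefore converges, up to subsequence, to some $(\alpha_0, u_0) \in \Sigma_0$ with $\Psi(\alpha_0, u_0) = \lambda^*$; positivity $\lambda^* > 0$ is automatic since $\alpha_0 > 0$ and $A > 0$ by $(A_0)$.

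Combining the preceding steps, $I$ is an interval with positive attained infimum $\lambda^*$ that is unbounded above, so $I = [\lambda^*, \infty)$. For every $\lambda \geq \lambda^*$, any $(\alpha, u) \in \Sigma_0$ with $\Psi(\alpha, u) = \lambda$ produces a solution of (\ref{autonomo}) via the relation $\alpha = \lambda/A(\int_\Omega u^\gamma dx)$, while for $\lambda < \lambda^*$ no such $(\alpha, u) \in \Sigma_0$ exists and hence (\ref{autonomo}) admits no solution, completing the proof.
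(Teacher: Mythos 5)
Your overall strategy---continuity of $\Psi$ on $\Sigma_0$, connectedness to show that $\Psi(\Sigma_0)$ is an interval, and a compactness argument to obtain a positive, attained infimum---is essentially the same route the paper takes; the paper phrases the interval step as a Bolzano/intermediate-value argument and works directly with a minimizing sequence, but these coincide with your sub-level-set formulation.

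There is, however, a logical gap in your argument that the $\alpha$-component of $S_M$ stays inside a strip $\{a_1(M)\leq \alpha \leq a_2(M)\}$. You claim that if $(\alpha_n, u_n) \in S_M$ with $\alpha_n \to 0^+$, then connecting it in $\Sigma_0$ to a high-$\Psi$ point forces $\Psi > M$ \emph{en route}, ``contradicting $(\alpha_n, u_n)\in S_M$.'' But the intermediate point at which $\Psi$ exceeds $M$ is in general \emph{not} $(\alpha_n, u_n)$, so no contradiction follows. Moreover, under a literal limsup reading of (\ref{G}), the desired boundedness can actually fail: a $\Psi$ oscillating near $\alpha = 0^+$ between small and large values still has $\limsup = \infty$ yet admits a sequence in $S_M$ with $\alpha_n \to 0^+$. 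What makes the paper's argument go through is that the bounds actually verified when applying this proposition (in the completion of the proof of Theorem \ref{T4}) are pointwise, e.g.\ $\Psi(\alpha, u) \geq C\alpha^{1-\tau\theta\gamma}$ with exponent negative, which yields a genuine $\lim\Psi = \infty$ along $\Sigma_0$ as $\alpha \to 0^+$ (and similarly as $\alpha \to \infty$). With that stronger reading of (\ref{G}), the $\alpha$-boundedness of $S_M$ follows directly without any sub-continuum argument: a sequence in $S_M$ with $\alpha_n \to 0^+$ or $\alpha_n \to \infty$ would have $\Psi(\alpha_n, u_n) \to \infty$, contradicting $\Psi \leq M$. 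The remaining steps of your proposal---the $u$-precompactness via the a-priori estimates of Section 2, the closedness of $\Sigma_0$, and the derivation of $\lambda^* > 0$ from $\alpha_0 > 0$ together with $(A_0)$---are sound and match the paper.
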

\noindent\begin{proof}
 As revealed in the proofs of the Claim 1 and Claim 2 of Theorem \ref{T1}, we have $\Sigma_0 \subset \mathcal{F}$, where  $\mathcal{F}$ is defined at ({\ref{cd}}). As a consequence, we conclude that the function $\Psi$ (as above)  is well-defined and continuous on $\Sigma_0$. Let us define
 $$\lambda^* = \inf\{\Psi(\alpha, u) : {(\alpha, u) \in \Sigma_0}\}. $$

 First, we  claim that $\lambda^*>0$. If not, there exists a sequence $\{(\alpha_n, u_n )\} \subset \Sigma_0$  such that $\alpha_nA\Big(\int_{\Omega}u_n^{\gamma}dx \Big) \rightarrow 0,$ which implies by (\ref{G}) that there are positive constants $C_1$ and $C_2$ satisfying $C_1 \leq \alpha_n \leq C_2$.  It follows from  this fact and Corollary \ref{C1}$-ii)$ that  $C_3\phi_1 \leq u_n$ in $\Omega$, for some positive constant $C_3$ independent of $n$, which results in $A\Big(\int_\Omega u_n^{\gamma}dx\Big) \geq C_4 > 0$. As a consequence of this fact and $C_1 \leq \alpha_n \leq C_2$, we have $C_5 \leq \alpha_n A\Big(\int_{\Omega}u_n^{\gamma}dx \Big) $ for some $C_5 > 0$, but this contradicts the fact that  $\alpha_nA\Big(\int_{\Omega}u_n^{\gamma}dx \Big) \rightarrow 0 $.

Next,  let us set $\lambda > \lambda^*$. By definition of $\lambda^*$, we can find a pair $(\alpha^*, u^*) \in \Sigma_0$ satisfying $\lambda^*  < \Psi(\alpha^*, u^*) < \lambda$. On the other hand, it follows from (\ref{G}) that there exists $(\alpha^{**}, u^{**})\in \Sigma_0$ such that $\Psi(\alpha^{**}, u^{**}) > \lambda$. In particular, we have proven that $H_{\lambda}(\alpha^*, u^*) < 0$ and $H_{\lambda}(\alpha^{**}, u^{**}) > 0$. Thus, by Bolzano's Theorem we get the existence of at least one zero of $H_\lambda$ in $\Sigma_0$.

Now, we prove that (\ref{autonomo}) admits at least one solution to $\lambda=  \lambda^*$. For this, it is enough to show that there is a pair $(\alpha, u) \in \Sigma_0$ such that $\Psi(\alpha, u) = \lambda_*$. However, by the definition of $\lambda^*$, we can find a sequence $(\alpha_n, u_n) \subset \Sigma_0$ satisfying $\Psi(\alpha_n, u_n) \rightarrow \lambda^*$. Using the hypothesis (\ref{G}), we again conclude that $C_1 \leq \alpha_n \leq C_2$, up to subsequence, for some positive constants $C_1$ and $C_2$. Thus, following the same argumentation of the proof of the Theorem 1.1, we obtain that $(\alpha_n, u_n ) \rightarrow (\alpha, u) \in \Sigma_0$ in $\mathbb{R}\times C(\overline{\Omega}). $ As $\Psi$ is a continuous application in $\Sigma_0$, we get $\Psi(\alpha, u) = \lambda^*$ as we wanted.

Finally, the non-existence of solutions to $\lambda < \lambda^*$ is a consequence of the definition of $\lambda^*$.  This ends the proof.
\fim
\end{proof}

Through the previous proposition, we are able to prove the Theorem \ref{T4}.

\noindent \textbf{\textbf{\textit{Proof of Theorem $1.3$-Completion:}}} It suffices to verify the hypotheses at (\ref{G}) and apply the above Proposition. To begin with, we prove the first limit at (\ref{G}).
We recall that by Lemma \ref{L3}$-a)$, the inequality $ u \leq \alpha^{\tau}m_2e^t$  holds true whenever $(\alpha, u) \in \Sigma_0$ with $\alpha < \alpha_0$, for some $m_2 >0$ independent of $\alpha$, $\tau = 1/(p-1-\delta)$ and $t= (p-1)/(p-1-\delta)$. By using this inequality and $\gamma < 0$, we get
\begin{equation}\label{bd}
\displaystyle\limsup_{\stackrel{\alpha \rightarrow 0^+}{(\alpha, u) \in \Sigma_0}} \int_\Omega u^{\gamma} = \infty.
\end{equation}

Thus,  as either $(A'_\infty)$ or $(A_\infty)$  with $0 < a_\infty $ holds, it follows from (\ref{bd})  that $$\Psi(\alpha, u) = \alpha A\Big(\int_\Omega u^{\gamma}dx\Big) \geq C_1\alpha\Big(\int_\Omega u^{\gamma}dx\Big)^{-\theta} \geq C\alpha^{1- \tau\theta\gamma } $$ for $\alpha $ small. Since $\theta\gamma > p-1-\delta$, we get $$\displaystyle\limsup_{\stackrel{\alpha \rightarrow 0^+}{(\alpha, u) \in \Sigma_0}} \Psi(\alpha, u) = \infty.$$

Now, let us prove the second limit at (\ref{G}). By  Lemma \ref{L3}$-b)$,  we know that $\alpha^\tau m_1\phi_1 \leq u $  for some $m_1 > 0$ independent of $\alpha$ and for $\tau = 1/(p-1-\beta)$, whenever $(\alpha, u) \in \Sigma_0$ with $\alpha > \alpha_\infty. $  As a result, since $\gamma < 0$, we have
\begin{equation}\label{be}
\displaystyle\limsup_{\stackrel{\alpha \rightarrow \infty}{(\alpha, u) \in \Sigma_0}} \int_\Omega u^{\gamma} = 0.
\end{equation}
   Therefore, by continuity and positivity of $A$ at $t=0$ and (\ref{be}), we obtain  $$\displaystyle\limsup_{\stackrel{\alpha \rightarrow \infty}{(\alpha, u) \in \Sigma_0}} \Psi(\alpha, u) = \infty. $$  This ends the proof.
\fim
\vspace{0.2cm}

Again, let us be benefitted by our tools and follow the strategy of \cite{MR3694626} to approach the problem $(P)$ for the degenerate case, that is, when $ A (x, 0) = 0 .$ This procedure allows us to complement the results in \cite{MR3694626} both to $p$-Laplacian operator, with $1<p<\infty$, and strongly-singular non-linearities.

\begin{theorem}[Degenerate case: A(x, 0) = 0]
 Assume that $\gamma > 0$ and $f$ satisfies $(f_1)$, $(f_2)$ with $\delta \leq \beta$. If  $A \in C(\overline{\Omega}\times[0, \infty), [0, \infty))$  with  $A(x, 0) = 0$ in $\Omega$,  $\theta\gamma = p-1- \beta$ and:
\begin{itemize}
\item[$a)$] $(A'_\infty)$ holds, then $(P)$ has at least one solution for each $\lambda > 0$.
\item[$b)$] $(A_\infty)$ holds with $0 < a_\infty $ in $\overline{\Omega}$, then $(P)$ has at least one solution for $\lambda $ small and no solution for $\lambda $ large.
\end{itemize}
\end{theorem}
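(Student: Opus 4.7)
The plan is to approximate the degenerate coefficient $A$ by $A_\epsilon(x,t) = A(x,t) + \epsilon$, solve the resulting non-degenerate problem via Theorem \ref{T2}, and then pass to the limit $\epsilon \to 0^+$. Since $\theta\gamma = p-1-\beta > 0$ forces $\theta > 0$, one has $A_\epsilon(x,t)t^\theta \ge \epsilon t^\theta \to \infty$, so $A_\epsilon$ satisfies both $(A_0)$ and $(A'_\infty)$. Theorem \ref{T2} part (a) therefore yields, for every $\lambda > 0$, a positive solution $u_\epsilon \in W_{\mathrm{loc}}^{1,p}(\Omega)\cap C(\overline{\Omega})$ of the $\epsilon$-problem $-A_\epsilon(x,\int u^\gamma dx)\Delta_p u = \lambda f(x,u)$.

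The heart of the argument is to produce uniform-in-$\epsilon$ estimates for $u_\epsilon$ in the appropriate $\lambda$-range. Set $t_\epsilon = \int_\Omega u_\epsilon^\gamma dx$. For the upper bound on $t_\epsilon$, I would revisit the proof of Theorem \ref{T2}(a) verbatim: pick $m_2 \in (0, (\int e^{t\gamma}dx)^{-1/\gamma})$ and $C_1$ satisfying the analogue of (\ref{AB}). Under either $(A'_\infty)$ for $A$ (case (a)) or under $(A_\infty)$ with $a_\infty > 0$ and $\lambda$ sufficiently small (case (b)), the inequality $A_\epsilon(x,t)t^\theta \ge A(x,t)t^\theta \ge C_1$ holds for all $t$ large, uniformly in $\epsilon$. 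Assuming $t_\epsilon \to \infty$ then leads to the same contradiction as in Theorem \ref{T2}(a), via the identity $\theta\gamma = p-1-\beta$. Hence $t_\epsilon \le T$ uniformly. Continuity of $A$ gives $A_\epsilon(x,t_\epsilon) \le C'$ uniformly in $\epsilon$, so $\lambda/A_\epsilon \ge \lambda/C'$, and Corollary \ref{C1}(ii) delivers a uniform lower bound $u_\epsilon \ge c\phi_1$ and in turn $t_\epsilon \ge c^\gamma \int \phi_1^\gamma dx > 0$.

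Equipped with the bounds $c\phi_1 \le u_\epsilon \le C$ and $0 < c_1 \le t_\epsilon \le T$, the compactness scheme of Claim 2 in the proof of Theorem \ref{T1} gives a subsequence $u_\epsilon \to u$ in $W^{1,p}_{\mathrm{loc}}(\Omega)\cap C(\overline{\Omega})$ with $(u - \mu)^+ \in W_0^{1,p}(\Omega)$ for every $\mu > 0$. Dominated convergence yields $t_\epsilon \to t_u = \int u^\gamma dx > 0$, and continuity of $A$ together with $t_u > 0$ gives $A_\epsilon(\cdot,t_\epsilon) \to A(\cdot,t_u) > 0$ uniformly on $\overline{\Omega}$. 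Passing to the limit in the weak formulation shows $u$ solves $(P)$ at $\lambda$. This settles existence for every $\lambda > 0$ in (a), and for $\lambda$ below the explicit threshold implicit in the bound on $C_1$ in (b).

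Non-existence in (b) for large $\lambda$ follows from direct a priori estimates on a hypothetical solution $u$. The lower bound on $f$ plus Corollary \ref{C1}(ii), applied with $\alpha = \lambda/A_*(t_u)$ where $A_*(t) = \max_{x\in\overline{\Omega}} A(x,t)$, yields $u \ge m_1(\lambda/A_*(t_u))^\tau \phi_1$ with $\tau = 1/(p-1-\beta)$. Integrating, $t_u \ge m_1^\gamma(\lambda/A_*(t_u))^{\tau\gamma}\int \phi_1^\gamma dx$. If $t_u$ is large, $(A_\infty)$ gives $A_*(t_u) \le 2\|a_\infty\|_\infty/t_u^\theta$, and the relation $\tau\theta\gamma = 1$ collapses the inequality to $1 \ge D\lambda^{\tau\gamma}$, forcing $\lambda \le \lambda^*$; the complementary case of $t_u$ bounded is ruled out because large $\lambda$ combined with bounded $A_*(t_u)$ forces $t_u$ to grow via the same lower estimate. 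The chief obstacle throughout is the double degeneracy---$f$ singular at $u=0$ and $A(x,\cdot)$ vanishing at $t=0$; keeping $t_\epsilon$ bounded strictly away from $0$ uniformly in $\epsilon$, so that the limit solution stays outside the degenerate regime of $A$, is the delicate step that would otherwise render the limit equation meaningless.
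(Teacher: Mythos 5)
Your proposal is correct and follows essentially the same route as the paper: approximate $A$ by $A_\epsilon = A + \epsilon$ (the paper uses $A_n = A + 1/n$), invoke Theorem~\ref{T2}(a) for the non-degenerate approximation, establish two-sided uniform bounds on $\int_\Omega u_\epsilon^\gamma\,dx$ via Corollary~\ref{C1} and the identity $\tau\theta\gamma = 1$, and then pass to the limit through the compactness machinery of Theorem~\ref{T1}. The only cosmetic difference is that the paper obtains the positive lower bound on $\int_\Omega u_\epsilon^\gamma\,dx$ by a contradiction argument that exploits $A(x,0)=0$ directly, whereas you deduce it forwardly from the already-established upper bound; both routes lean on the same step of Corollary~\ref{C1}(ii), and the remaining bound $\|u_\epsilon\|_\infty \le C$ (which you assert but do not write out) follows in both treatments once $\int_\Omega u_\epsilon^\gamma\,dx$ is pinned in a compact interval on which $A$ is positive.
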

\begin{proof}
For each $n \in \mathbb{N}$, consider
$$
 (P_n)\left\{
\begin{array}{l}
-A_{n}\Big(x, \displaystyle\int_{\Omega}u^{\gamma}dx\Big) \Delta_pu = {\lambda }f(x,u)  ~ \mbox{in } \Omega,\\
    u>0 ~ \mbox{in }\Omega,~~
    u=0  ~ \mbox{on }\partial\Omega,
\end{array}
\right.
$$
where $A_n(x,t) = A(x,t) + 1/n. $ Since $\displaystyle\lim_{t\rightarrow \infty}A_{n}(x,t)t^{\theta} = \infty$, with $\theta \gamma = p-1- \beta$, it follows from  the item $a)$ of Theorem \ref{T2}  that $(P_n)$ has at least one solution for each $\lambda > 0$. Thus, given a $\lambda > 0$, denote by $u_n$ one such solution of $(P_n)$. From this, let us prove the items $a)$ and $b)$ above.
\begin{itemize}
\item[a)] The proof of this item is a consequence of the following claims:

 \begin{equation}\label{ns} i) \int_\Omega u_n^{\gamma} dx \not\rightarrow 0 ~~~ ~~~~\mbox{and} ~~ ~~~~ii) \int_\Omega u_n^{\gamma} dx \not\rightarrow \infty. \end{equation}

Let us prove the first claim in (\ref{ns}).
Suppose by contradiction, that $\int_\Omega u_n^{\gamma} dx \rightarrow 0$. Since $A(x,0) = 0$ and $A$ is a continuous function, for  given  $C > 0$  sufficiently large there exists $n_0 \in \mathbb{N}$  such that  $A_{n}\Big(x, \int_\Omega u_n^{\gamma}dx \Big) < 1/C $ for all $n > n_0 $. Thus, we get  $-\Delta_p u_n \geq \lambda C f(x, u_n)$, which implies by
Corollary \ref{C1}$-ii)$ that  $u_n \geq (\lambda C)^\tau m_1 \phi_1$  for  $n$ large, where $\tau = (p-1-\beta)^{-1}$. Hence, from this inequality we get $0 < (\lambda C)^{\tau\gamma}m_1^{\gamma}\int_\Omega \phi_1^{\gamma}dx  \leq \int_\Omega u_n^{\gamma}dx \rightarrow 0, $ which is an absurd.

Now we will prove the second claim in (\ref{ns}).
Again, suppose by contradiction that $\int_\Omega u_{n}^{\gamma}dx \rightarrow \infty$. From ($A'_\infty$), for each $C > 0$ enough large, we have $A\Big(x, \int_\Omega u_n^{\gamma}dx \Big)\Big(\int_\Omega u_n^{\gamma}dx \Big)^{\theta} > C$ for all $n $ big enough. In this case, we obtain  $-\Delta_p u_{n} \leq \frac{\lambda}{C}\Big(\int_\Omega u_n^{\gamma}dx \Big)^{\theta}f(x, u_n), $ which by the Corollary \ref{C1}$-i)$ and simple calculations implies
\begin{equation}\label{I}
\Big(\int_{\Omega}u_{n}^{\gamma}dx \Big)^{1-\tau\theta\gamma } \leq \Big(\frac{\lambda}{C}\Big)^\tau m_2^{\gamma},
\end{equation}
where $\tau = (p-1- \beta)^{-1}. $
As $\theta\gamma = p-1- \beta$ and  $C>0$ was taken large enough, the inequality (\ref{I}) results into $1 \leq \Big(\frac{\lambda}{C}\Big)^\tau m_2^\tau < 1$.  This is an absurd and from this the Claim in $ii)$ is proved.

Observe that from claims in $i)-ii)$, we get $0 < C_1 \leq \int_\Omega u_{n}^{\gamma}dx \leq C_2$, for some positive constants $C_1$ and $C_2$.  Thus, proceeding as in the proof of the Claim 2 in  Theorem \ref{T1}, we can  show that $u_n$ converge in $W_{\mathrm{loc}}^{1,p}(\Omega)$ for some $u \in W_{\mathrm{loc}}^{1,p}(\Omega) \cap C(\overline{\Omega}),$ which is a solution of $(P)$. It concludes the proof the item$-a)$.

\item[b)]
As in the item-$a)$, the proof here follows from the following asserts:
\begin{equation}\label{nS}
i) \int_\Omega u_n^{\gamma} dx \not\rightarrow 0 ~~~~~~~~~\mbox{and} ~~~~~~~ ~~~ii) \int_\Omega u_n^{\gamma} dx \not\rightarrow \infty, \mbox{for each} ~\lambda > 0 ~\mbox{small.}
\end{equation}

The proof of the first Claim in (\ref{nS}) is the same of the previous item$-i)$.

Let us prove $ii)$. As $a_\infty > 0$ in $\overline{\Omega}$, then defining $C = (\inf_\Omega a_\infty)/2$, there exists $ t_0 > 0$ such that $A(x,t)t^{\theta} \geq C > 0 $ for all $ t > t_0$. Thus, if we suppose that $\int_\Omega u_n^{\gamma}dx \rightarrow \infty$, we obtain  $-\Delta_p u_{n} \leq \frac{\lambda}{C}\Big(\int_\Omega u_n^{\gamma}dx\Big)^{\theta}f(x, u_n),$ which again by Corollary \ref{C1}$-i)$ implies in  $u_n \leq \Big(\frac{\lambda}{C}\Big)^\tau\Big(\int_\Omega u_n^{\gamma }dx\Big)^{\theta \tau}m_2e^t$ for some $m_2 > 0,$ $\tau = (p-1- \beta)^{-1}$, $t= (p-1)/(p-1-\delta)$ and $n$ appropriately large. As a consequence of this, we obtain  $ \Big(\int_\Omega u_n^{\gamma }dx\Big)^{1 - \theta\gamma \tau} \leq \Big(\frac{\lambda}{C}\Big)^{\gamma \tau}m_2^{\gamma}\int_\Omega e^{t\gamma}dx.$ Since $\theta\gamma = p-1-\beta$, we get by the last inequality that $1 \leq \Big(\frac{\lambda}{C}\Big)^{\gamma \tau}m_2^{\gamma}\int_\Omega e^{t\gamma}dx, $ however this is a contradiction for $\lambda < {C}{\Big(m_2^{\gamma}\int_\Omega e^{t\gamma}dx\Big)^{-1/\gamma \tau}} = \lambda^*. $ Therefore, $\int_\Omega u_n^{\gamma} dx \not\rightarrow \infty$ for $0 < \lambda < \lambda^*$.

From $i)-ii)$, by the same argument used in item$-a)$ we conclude that $(P)$ admits at least one positive solutions for $0 < \lambda < \lambda^*$. To justify that $(P)$ does not have solution for  $\lambda$ large, just follow the same argument of item $b$) of Theorem \ref{T2}, using $\theta\gamma = p-1-\beta. $
\end{itemize} This proves the Theorem.
\fim
\end{proof}

\section{A strongly-singular non-autonomous Kirchhoff problem}

In this section, we prove Theorem \ref{T5} which deals with a non-autonomous Kirchhoff problem, defined in $(Q)$,  with strongly-singular nonlinearity.

The proof of  Theorem \ref{T5} follows the same steps of  Theorem \ref{T1} with small adaptations. Recall that in the proof of  Lemma \ref{L2} we used that $\
\|u_\epsilon\|_\gamma \leq C$ for  some $C$ independent of $\epsilon$, where  $(\lambda_\epsilon, u_\epsilon)$ is a solution of perturbed problem $(P_\epsilon)$ and belongs to the boundary of an open bounded set containing $(0,0)$. Here, due to the presence of  $\|\nabla u\|_p$ in the Kirchhoff term, we need a similar estimate on $\|\nabla u_\epsilon\|_p$, which is crucial in our argument. To avoid repetition, we present a sketch  of each step while giving attention to the notable points.
Corresponding to $(Q)$, we introduce the following perturbed problem
$$
 (Q_{\epsilon})~ ~ ~ \left\{
\begin{array}{l}
-M\Big(x, \|\nabla u\|_p^p\Big)\Delta_pu = {\lambda }f(x,u + \epsilon)  ~ \mbox{in } \Omega,\\
    u>0 ~ \mbox{in }\Omega,~~
    u=0  ~ \mbox{on }\partial\Omega.
\end{array}
\right.
$$
About $(Q_\epsilon)$, we have the following result.
 \begin{lemma}
  Suppose that  $\gamma > 0$ and $M$ satisfies $(M_0)$. Then, for each $\epsilon > 0$ there exists an unbounded $\epsilon-$\textit{continuum} $\Sigma_{\epsilon} \subset \mathbb{R}^+ \times C(\overline{\Omega})$ of positive solutions of $(Q_{\epsilon})$ emanating from $(0,0)$.
 \end{lemma}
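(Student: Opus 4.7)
The plan is to mimic the proof of Lemma \ref{L1}: construct a compact map whose nontrivial fixed points are exactly positive solutions of $(Q_\epsilon)$, and then invoke Theorem 3.2 of \cite{MR0301587}. The subtlety relative to $(P_\epsilon)$ is that the nonlocal coefficient $M(x, \|\nabla v\|_p^p)$ requires $v$ to have an $L^p$-integrable gradient, whereas $\int_\Omega |v|^\gamma dx$ is continuous on $C(\overline{\Omega})$.

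Fix $\epsilon>0$ and work in a Banach space $X$ that embeds continuously into $C(\overline{\Omega})$ and on which $v \mapsto \|\nabla v\|_p$ is a continuous functional, for instance $X = W_0^{1,p}(\Omega)\cap C(\overline{\Omega})$ (endowed with the sum norm) or $X = C_0^1(\overline{\Omega})$. For $(\lambda, v)\in \mathbb{R}^+\times X$, the frozen problem
\begin{equation*}
-M(x, \|\nabla v\|_p^p)\,\Delta_p u = \lambda f(x, |v|+\epsilon) \ \ \text{in } \Omega, \qquad u = 0 \ \ \text{on } \partial\Omega,
\end{equation*}
has an $L^\infty$ right-hand side (since $|v|+\epsilon\geq\epsilon$ and $v$ is bounded) and a coefficient bounded below by $\underline{a}>0$ from $(M_0)$. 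Classical theory for the $p$-Laplacian yields a unique weak solution $u\in W_0^{1,p}(\Omega)\cap C^{1,\alpha}(\overline{\Omega})$, so I would set $T(\lambda,v):=u$.

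Next I would check that $T$ is compact as a map $\mathbb{R}^+\times X \to X$: along a bounded sequence $(\lambda_n, v_n)$ with $v_n \to v$ in $X$, the coefficients $M(x,\|\nabla v_n\|_p^p)$ converge uniformly on $\overline{\Omega}$, the sources $\lambda_n f(x,|v_n|+\epsilon)$ converge uniformly on $\overline{\Omega}$, and then uniform $C^{1,\alpha}$-estimates for the $p$-Laplacian combined with Arzel\`a-Ascoli produce $T(\lambda_n,v_n)\to T(\lambda,v)$ in $C^1(\overline{\Omega})$ (up to a subsequence), hence in $X$. The remaining Rabinowitz hypotheses are also standard: $T(0,v)=0$ for every $v$, and $T(\lambda,0)=0$ with $\lambda>0$ is impossible since the strong maximum principle applied to $-a(x)\Delta_p u = \lambda f(x,\epsilon)$ forces a strictly positive solution. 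Rabinowitz's theorem then delivers an unbounded \textit{continuum} $\Sigma_\epsilon$ of nontrivial fixed points of $T$, emanating from $(0,0)$; the strong maximum principle also gives $T((0,\infty)\times X)\subset C(\overline{\Omega})_+$, so every element of $\Sigma_\epsilon\setminus\{(0,0)\}$ is a positive solution of $(Q_\epsilon)$. The continuous embedding $X\hookrightarrow C(\overline{\Omega})$ finally places $\Sigma_\epsilon$ inside $\mathbb{R}^+\times C(\overline{\Omega})$, as claimed.

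The main obstacle, as the authors signal in the paragraph preceding the lemma, is not the bifurcation mechanism itself but the need to keep $\|\nabla v\|_p$ well-defined and continuous: one must choose the ambient space carefully (here $W_0^{1,p}(\Omega)\cap C(\overline{\Omega})$ or $C_0^1(\overline{\Omega})$) so that the Kirchhoff coefficient varies continuously with $v$, while still retaining the $C^{1,\alpha}$-compactness of the solution operator. The same uniform control on $\|\nabla u_\epsilon\|_p$ that is needed to verify compactness here will reappear as the crucial a priori estimate when one passes to the $\epsilon\to 0^+$ limit to produce the corresponding continuum for $(Q)$.
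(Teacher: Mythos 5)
Your proof is correct in outline but takes a genuinely different route from the paper. You freeze both the reaction term $f$ and the Kirchhoff coefficient $M$ at the input $v$, which forces you to work in a stronger ambient space $X$ (such as $C_0^1(\overline{\Omega})$ or $W_0^{1,p}(\Omega)\cap C(\overline{\Omega})$) on which $v\mapsto\|\nabla v\|_p$ is a continuous functional. The paper instead freezes only $f$ at $v$ and keeps $M$ evaluated at $\|\nabla u\|_p^p$ of the \emph{solution} itself, resolving this self-consistent coefficient by a scalar monotone fixed-point argument: with $u_R$ the solution for coefficient $M(x,R)$, the map $h(R)=\|\nabla u_R\|_p^p$ is continuous, non-increasing and positive at $R=0$, hence has a unique fixed point $\tilde R$; this defines $T$ directly on $\mathbb{R}^+\times C(\overline{\Omega})$, which is precisely the space in which the lemma's conclusion is stated. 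What you save is the scalar sub-argument for the self-consistent problem; what you pay is a gap you should close explicitly: Rabinowitz's theorem hands you a continuum unbounded in $\mathbb{R}^+\times X$, and you still need it unbounded in $\mathbb{R}^+\times C(\overline{\Omega})$. This does follow, because if $\lambda$ and $\|u\|_\infty$ were both bounded along the continuum, then $M\geq\underline{a}$ would make $\lambda f(x,u+\epsilon)/M(x,\|\nabla u\|_p^p)$ uniformly bounded in $L^\infty$, giving uniform $C^{1,\alpha}$ bounds and hence $X$-boundedness, a contradiction. Finally, the compactness check is phrased as a continuity check (``along a bounded sequence $(\lambda_n,v_n)$ with $v_n\to v$ in $X$''); the compactness of $T$ on bounded sets should rather rest directly on the uniform $C^{1,\alpha}$ estimates, which hold for any bounded family of inputs without assuming convergence.
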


\noindent\begin{proof}
Consider for each $\lambda, R > 0$ and $v \in C(\overline{\Omega})$, the auxiliary problem
\begin{equation}\label{H2}
\left\{
\begin{array}{l}
-M(x, R)\Delta_pu = {\lambda }f(x,|v| + \epsilon)  ~ \mbox{in } \Omega,\\
    u>0 ~ \mbox{in }\Omega,~~
    u=0  ~ \mbox{on }\partial\Omega.
\end{array}
\right.
\end{equation}
As $M(x,t)=a(x)+b(x)t^{\gamma}$ with $a(x) \geq \underline{a} > 0$ and $f$ is continuous, then (\ref{H2}) admits a unique solution $u_R \in C^{1,\alpha}(\overline{\Omega}) \cap W_0^{1,p}(\Omega)$, for some $\alpha \in (0,1)$. Thus
$$  \displaystyle\int_{\Omega}|\nabla u_R|^pdx = \displaystyle\int_\Omega \frac{\lambda f(x, |v| + \epsilon)u_R}{M(x,R)}dx. $$

Define $h: \mathbb{R}^+ \rightarrow \mathbb{R}^+$  by $h(R) = \displaystyle\int_\Omega \frac{\lambda f(x, |v| + \epsilon)u_R}{M(x,R)}dx.$ Note that $h$ is continuous and $h(0) > 0$. Moreover, observe that $h$ is non-increasing. Indeed, if $R_1 < R_2$ then
$$-\Delta_p u_{R_2} =  \frac{\lambda f(x, |v| + \epsilon)}{M(x,R_2)} \leq \frac{\lambda f(x, |v| + \epsilon)}{M(x,R_1)} = -\Delta_pu_{R_1}. $$
Also, as $u_{R_1}|_{\partial \Omega} = u_{R_2}|_{\partial \Omega}$,
from classical comparison principle, we have $u_{R_2} \leq u_{R_1} $ and as a consequence we conclude that $h(R_2) \leq h(R_1)$.  Thus,  there exists a unique solution (say $\tilde{R}$) of $h({R}) = {R}, $ that is,
$$\tilde{R} = \displaystyle\int_{\Omega}\frac{\lambda f(x, |v|+\epsilon)u_{\tilde{R}}}{M(x, \tilde{R})}dx  = \displaystyle\int_{\Omega}|\nabla u_{\tilde{R}}|^p dx. $$ Hence,  $u_{\tilde{R}}$ is a solution of
\begin{equation}\label{H3}
\left\{
\begin{array}{l}
-M\Big(x, \|\nabla u\|_p^p)\Delta_pu = {\lambda }f(x,|v| + \epsilon)  ~ \mbox{in } \Omega,\\
    u>0 ~ \mbox{in }\Omega,~~
    u=0  ~ \mbox{on }\partial\Omega.
\end{array}
\right.
\end{equation}

We claim that (\ref{H3}) has unique solution. In fact, suppose that $u \neq w \in W_0^{1,p}(\Omega)$ are two solutions of (\ref{H3}). If $\int_{\Omega}|\nabla u|^pdx = \int_{\Omega}|\nabla w|^pdx$, then $u = w$ in $\Omega$.  On the other hand, if
$R_1 = \int_{\Omega}|\nabla u|^pdx <  \int_{\Omega}|\nabla w|^pdx = R_2$, we have $u_{R_2} \leq u_{R_1} $ and as a consequence
 $$R_2=\displaystyle\int_{\Omega}|\nabla w|^pdx = \displaystyle\int_{\Omega}\frac{f(x,|v|+\epsilon)u_{R_2}}{M(x,R_2)} dx \leq \displaystyle\int_{\Omega}\frac{f(x,|v|+\epsilon)u_{R_1}}{M(x,R_1)} dx =  \displaystyle\int_{\Omega} |\nabla u|^pdx=R_1.$$
 Therefore, in any case we get a contradiction, which proves  that (\ref{H3}) has only one solution. Now, we consider the operator $T: \mathbb{R}^+\times C(\overline{\Omega}) \rightarrow C(\overline{\Omega})$ which associates each pair $(\lambda, v) \in \mathbb{R}^+\times C(\overline{\Omega})$ to the only solution of (\ref{H3}). Since $M(x,t) \geq \underline{a} > 0 \in \Omega$, the rest of the proof follows from Lemma \ref{L1}, in a similar way.
 \fim
\end{proof}
\vspace{0.2cm}

In order to study the limit behavior of the components $\Sigma_\epsilon,$ we prove the following Lemma.

\begin{lemma}\label{L7} Suppose $(f_2)$, $(M_0)$ and $(\Gamma_0)$ holds. Let
 $U \subset \mathbb{R} \times C(\overline{\Omega})$ be a bounded open set containing $(0,0)$ and $(\lambda_{\epsilon}, u_{\epsilon})$ be a solution of $(Q_{\epsilon})$ such that $ (\lambda_{\epsilon}, u_{\epsilon})\in \Sigma_\epsilon \cap \Big((0, \infty) \times W_0^{1,p}(\Omega))\Big) \cap \partial U$. Then, for some positive constant $C(U)$, independent of $\epsilon$, we have $\|\nabla u_\epsilon\|_p \leq C(U)$.
\end{lemma}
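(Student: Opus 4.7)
The plan is to mimic the proof of Lemma~\ref{L2} but upgraded to control the full $W_0^{1,p}$-norm, exploiting the Kirchhoff structure of $M$ and the hypothesis $(\Gamma_0)$ to close the estimate. Since $U$ is bounded and $(\lambda_{\epsilon}, u_{\epsilon}) \in \partial U$, one immediately extracts $\lambda_{\epsilon} + \|u_{\epsilon}\|_{\infty} \leq K(U)$, and the Picone-type step from the proof of Lemma~\ref{L2} goes through verbatim---thanks to $M(x,\|\nabla u_{\epsilon}\|_p^p) \geq \underline{a} > 0$ guaranteed by $(M_0)$---producing a lower bound $\lambda_{\epsilon} \geq c_{*} > 0$ together with the pointwise estimate $u_{\epsilon} \geq c_{*}^{1/(p-1)}\mathcal{K}_1 \phi_1$ in $\Omega$, with all constants depending only on $U$ and not on $\epsilon$.

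The heart of the argument is to test $(Q_{\epsilon})$ with $u_{\epsilon} \in W_0^{1,p}(\Omega)$, obtaining
\[
\underline{a}\|\nabla u_{\epsilon}\|_p^p \leq \int_{\Omega} M\big(x,\|\nabla u_{\epsilon}\|_p^p\big)|\nabla u_{\epsilon}|^p\,dx = \lambda_{\epsilon}\int_{\Omega} f(x,u_{\epsilon}+\epsilon)u_{\epsilon}\,dx,
\]
so that everything reduces to bounding the right-hand side uniformly in $\epsilon$. By $(f_2)$ and continuity of $f$ on compact subsets of $\overline{\Omega}\times(0,\infty)$, there exist $\tau_0 \in (0,1)$ and $C=C(K)$ with $f(x,t) \leq C(1+t^{\delta})$ for $0 < t \leq K+1$, whence $\int f(x,u_{\epsilon}+\epsilon)u_{\epsilon}\,dx \leq C + C\int(u_{\epsilon}+\epsilon)^{\delta+1}\,dx$. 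In the first branch of $(\Gamma_0)$, namely $-1 \leq \delta < p-1$, the integrand is pointwise bounded by $(K+1)^{\delta+1}$ and we are done. In the sub-range $-2 < \delta < -1$ the lower bound $u_{\epsilon}+\epsilon \geq c_{*}\phi_1$ yields $(u_{\epsilon}+\epsilon)^{\delta+1} \leq c_{*}^{\delta+1}\phi_1^{\delta+1}$, which is integrable by Hopf's lemma since $\phi_1 \asymp \mathrm{dist}(\cdot,\partial\Omega)$ and $\delta+1 > -1$.

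For the remaining very singular range $-\tfrac{2p-1}{p-1} \leq \delta \leq -2$ the direct estimate breaks down and the Kirchhoff structure is essential. The plan is to test $(Q_{\epsilon})$ with the admissible function $(u_{\epsilon}+\epsilon)^{-\delta} - \epsilon^{-\delta} \in W_0^{1,p}(\Omega)$ (legitimate because $-\delta > 1$ and $u_{\epsilon} \in W_0^{1,p}(\Omega)$); using $f(x,t)t^{-\delta} \leq C$ on $(0,K+1]$, this yields a uniform bound on $\int M(x,\|\nabla u_{\epsilon}\|_p^p)(u_{\epsilon}+\epsilon)^{-\delta-1}|\nabla u_{\epsilon}|^p\,dx$, equivalently a $W_0^{1,p}$-bound on $w_{\epsilon} := (u_{\epsilon}+\epsilon)^{(p-1-\delta)/p} - \epsilon^{(p-1-\delta)/p}$. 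Interpolating this bound against $u_{\epsilon}+\epsilon \geq c_{*}\phi_1$ through H\"older and Sobolev embeddings, the sharp numerical relation $\gamma < (p-1-\delta)/(-\delta-1)$ in $(\Gamma_0)$ is exactly what is needed to translate the control on $w_{\epsilon}$ back into control on $\|\nabla u_{\epsilon}\|_p$, finally delivering $\|\nabla u_{\epsilon}\|_p \leq C(U)$ uniformly in $\epsilon$.

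The main obstacle is precisely this very singular range $\delta \leq -2$: the naive test $u_{\epsilon}$ together with Hopf's lemma is too crude, and one must play the auxiliary test $(u_{\epsilon}+\epsilon)^{-\delta}$ against the Kirchhoff exponent $\gamma$ using the threshold dictated by $(\Gamma_0)$, a threshold tailored precisely to keep solutions within $W_0^{1,p}(\Omega)$ and matching the sharp classical range established for the associated local singular problem.
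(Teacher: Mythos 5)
The key step of your proposal—the Picone argument "going through verbatim" to yield a pointwise lower bound $u_{\epsilon} \geq c_{*}\phi_1$ with $c_{*}$ independent of $\epsilon$—does not work, and the flaw is structural, not cosmetic. Inspect the Picone inequality (\ref{9}) in Lemma~\ref{L2}: there the crucial estimate bounds $\dfrac{f(x,u_{\epsilon}+\epsilon)}{(u_{\epsilon}+\epsilon)^{p-1}A\bigl(x,\int u_{\epsilon}^{\gamma}\bigr)}$ from \emph{below} by $\dfrac{f(x,u_{\epsilon}+\epsilon)}{(u_{\epsilon}+\epsilon)^{p-1}A_K}$, which requires an \emph{upper} bound $A_K$ on the non-local coefficient. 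In the Kirchhoff setting the analogous quantity is $M\bigl(x,\|\nabla u_{\epsilon}\|_p^p\bigr)$, and since $M(x,t)=a(x)+b(x)t^{\gamma}$ grows unboundedly with $t$, no upper bound is available until you already control $\|\nabla u_{\epsilon}\|_p$. Your appeal to $M\geq \underline{a}$ is a bound in the wrong direction and does not help. This is precisely why, in the paper, Lemma~\ref{L5} (the Kirchhoff analogue of Lemma~\ref{L2}) is stated \emph{after} Lemma~\ref{L7} and its constant $\mathcal{K}_1$ is built from $A'_U:=\max\{M(x,t):0\leq t\leq C(U)^p\}$ with $C(U)$ coming from Lemma~\ref{L7}. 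Your proposal inverts this order and is therefore circular; the "uniform" $c_{*}$ and the integrability conclusion in the range $-2<\delta<-1$ both rest on it.

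The paper escapes the circularity with a self-referential estimate rather than a uniform one. It observes that $u_{\epsilon}+\epsilon$ is a supersolution of the local problem $-\Delta_p u=\lambda_{\epsilon}C_2\,u^{\delta}\big/\bigl(\max_{\overline{\Omega}}a+\max_{\overline{\Omega}}b\,\|\nabla u_{\epsilon}\|_p^{\gamma p}\bigr)$, where the unknown $\|\nabla u_{\epsilon}\|_p$ appears explicitly in the coefficient, and compares against the subsolution $\underline{u}=s\phi_1^{p/(p-1-\delta)}$ (note the exponent $p/(p-1-\delta)>1$, chosen so that $\phi_1^{p(\delta+1)/(p-1-\delta)}$ is integrable throughout $\bigl(-\tfrac{2p-1}{p-1},-1\bigr)$, making any split at $\delta=-2$ unnecessary). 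Theorem~\ref{unicidade1} then yields the lower bound (\ref{el}), whose coefficient carries the Kirchhoff term. Substituting into the energy identity (\ref{H9}) produces a closed inequality $\|\nabla u_{\epsilon}\|_p^p\leq C_5\bigl(1+\|\nabla u_{\epsilon}\|_p^{-\gamma p(\delta+1)/(p-1-\delta)}\bigr)$, and $(\Gamma_0)$, i.e.\ $\gamma<\tfrac{p-1-\delta}{-\delta-1}$, is exactly what makes the exponent on the right strictly less than $p$. That is where the threshold is used—not in some auxiliary interpolation.

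Your alternative auxiliary test $\varphi=(u_{\epsilon}+\epsilon)^{-\delta}-\epsilon^{-\delta}$ for $\delta\leq -2$ is a genuinely different idea and the resulting $W_0^{1,p}$-bound on $w_{\epsilon}=(u_{\epsilon}+\epsilon)^{(p-1-\delta)/p}-\epsilon^{(p-1-\delta)/p}$ is a correct computation. But the final step—"interpolating this bound against $u_{\epsilon}+\epsilon\geq c_{*}\phi_1$ through H\"older and Sobolev"—is not carried out, and the lower bound it invokes is the circular one from above. As written, the argument has a gap you would need to close by replacing the uniform lower bound with the $\|\nabla u_{\epsilon}\|_p$-dependent one and then closing a fixed-point-type inequality, which brings you back to the paper's mechanism anyway.
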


\noindent\begin{proof}
Consider $(\lambda_\epsilon, u_\epsilon) \in \Sigma_\epsilon \cap \partial U$, then $\lambda_\epsilon\leq K, \|u_\epsilon\|_{\infty} \leq K$ for some positive constant $K$ depending only on $U$. Taking $u_{\epsilon}$ as a test function in $(Q_\epsilon)$ and using  $(f_2)$ we get
\begin{equation}\label{H9}
 \|\nabla u_\epsilon\|_p^p \leq C_1\lambda_{\epsilon}\Big(\displaystyle\int_\Omega (u_{\epsilon}+ \epsilon)^{\delta + 1}dx + 1\Big).
\end{equation}
If $\delta \geq -1$, then by (\ref{H9}) the required boundedness follows trivially from the fact that $\lambda_{\epsilon}\leq K,  \|u_{\epsilon}\|_{\infty} \leq K. $ Now, suppose that $\delta \in \Big(-\frac{2p-1}{p-1}, -1\Big). $
As $\|u_{\epsilon}\|_{\infty} \leq K$, by the continuity of $f$ we can find a $C_2 > 0$ independent of $\epsilon$ such that $f(u_{\epsilon} + \epsilon) \geq C_2(u_\epsilon + \epsilon)^{\delta}$. Thus, $u_\epsilon + \epsilon$ is a supersolution of
\begin{equation}\label{H8}
-\Delta_p u = \frac{\lambda_{\epsilon}C_2u^{\delta}}{\displaystyle\max_{\overline{\Omega}} a + \displaystyle\max_{\overline{\Omega}}b \|\nabla u_{\epsilon}\|_p^{\gamma p}}.
\end{equation}
On the other hand, take $\underline{u} = s\phi_1^{\frac{p}{p-1-\delta}}, $ where $s>0$ will be fixed later, then a simple calculation shows that

\begin{eqnarray*}
 -\Delta_p \underline{u}& = & \Big(\frac{sp}{p-1-\delta}\Big)^{p-1}\phi_1^{\frac{\delta p}{p-1-\delta}}\Big[\frac{(-\delta -1)(p-1)}{p-1-\delta}|\nabla \phi_1|^p + \lambda_1\phi_1^p\Big] \\
 &\leq & C_3\Big(\frac{sp}{p-1-\delta}\Big)^{p-1}\phi_1^{\frac{\delta p}{p-1-\delta}} = C_3s^{p-1-\delta}\Big(\frac{p}{p-1-\delta}\Big)^{p-1}\underline{u}^{\delta},
\end{eqnarray*}
where $C_3 = \displaystyle\max_{\overline{\Omega}}\Big[\frac{(-\delta -1)(p-1)}{p-1-\delta}|\nabla \phi_1|^p + \lambda_1\phi_1^p\Big]. $  Therefore, if we  choose $$ s = C_4\Big(\frac{\lambda_{\epsilon}}{\displaystyle\max_{\overline{\Omega}} a + \displaystyle\max_{\overline{\Omega}}b \|\nabla u_{\epsilon}\|_p^{\gamma p}}\Big)^{\frac{1}{p-1-\delta}},$$ where
$C_4 = \Big[\frac{C_2(p-1-\delta)^{p-1}}{C_3p^{p-1}}\Big]^{\frac{1}{p-1-\delta}}, $ then $\underline{u}$ is a subsolution of (\ref{H8}) and  by the Theorem \ref{unicidade1} we get
\begin{equation}\label{el}
u_{\epsilon} + \epsilon \geq C_4\Big(\frac{\lambda_{\epsilon}}{\displaystyle\max_{\overline{\Omega}} a + \displaystyle\max_{\overline{\Omega}}b \|\nabla u_{\epsilon}\|_p^{\gamma p}}\Big)^{\frac{1}{p-1-\delta}}\phi_1^{\frac{p}{p-1-\delta}}.
\end{equation}
Now, coming back in (\ref{H9}) and using (\ref{el}) together with $\delta \in \Big(-\frac{2p-1}{p-1}, -1\Big)$, we obtain
$$ \|\nabla u_{\epsilon}\|_p^p \leq C_5\Big(1 + \|\nabla u_{\epsilon}\|_p^{-\frac{\gamma p(\delta + 1)}{p-1-\delta}} \Big).$$
Since $\gamma < \frac{p-1-\delta}{-1-\delta},$  it follows from the last inequality  that $\|\nabla u_{\epsilon}\|_p \leq C(U)$, where $C(U)$ is independent of $\epsilon$.
\fim
\end{proof}

In the light of above result, we prove the following Lemma, similar to Lemma \ref{L2}. We highlight only the principal points in the proof.

\begin{lemma}\label{L5} Admit that $f$, $M$ and $\gamma$ satisfy  $(f_2)$, $(M_0)$ and $(\Gamma_0)$, respectively. Let $U \subset \mathbb{R} \times C(\overline{\Omega})$ be a bounded open set containing $(0,0)$ and a pair $ (\lambda_{\epsilon}, u_{\epsilon}) \in \Sigma_\epsilon \cap\Big((0, \infty) \times (C(\overline{\Omega}) \cap W_0^{1,p}(\Omega))\Big) \cap \partial U$ be a solution of $(Q_{\epsilon})$ satisfying $\lambda_\epsilon\leq K, \Vert u_\epsilon\Vert_{\infty} \leq K$. Then, there are positive constants $\mathcal{K}_1 = \mathcal{K}_1(K, U)$, $\mathcal{K}_2 = \mathcal{K}_2(k,K)$ and $\epsilon_0 > 0$ such that
\begin{equation}\label{H6}
\lambda_{\epsilon}^{\frac{1}{p-1}}\mathcal{K}_1(K, U) \phi_1 \leq u_{\epsilon} \leq k + \lambda_{\epsilon}^{\frac{1}{p-1}}\mathcal{K}_2(k, K)^{\frac{1}{p-1}}e ~~ \mbox{in} ~\Omega
\end{equation}
for each $k\in (0, K]$ fixed and for all $0 < \epsilon < \epsilon_0$.
\end{lemma}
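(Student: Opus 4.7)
The plan is to mirror the proof of Lemma \ref{L2}, with the role of the bounds $a_K, A_K$ on the non-local coefficient replaced by analogous bounds on the Kirchhoff coefficient $M(x,\|\nabla u_\epsilon\|_p^p)$. The lower bound is trivial: hypothesis $(M_0)$ gives $M(x,t)\geq \underline{a}>0$ uniformly in $\overline{\Omega}\times[0,\infty)$. The upper bound is exactly where Lemma \ref{L7} is indispensable: it supplies a constant $C(U)$, independent of $\epsilon$, such that $\|\nabla u_\epsilon\|_p\leq C(U)$, and therefore
\[
M(x,\|\nabla u_\epsilon\|_p^p)\leq M_K:=\max_{\overline{\Omega}}a(x)+\max_{\overline{\Omega}}b(x)\,C(U)^{\gamma p}
\]
holds uniformly in $\epsilon$.

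With these two uniform bounds in hand, I would proceed exactly as in Lemma \ref{L2}. For the \emph{right} inequality in \eqref{H6}, fix $k\in(0,K]$ and set
\[
\mathcal{K}_2(k,K)=\max\Big\{\tfrac{f(x,t)}{\underline{a}}:\ x\in\overline{\Omega},\ k\leq t\leq K+1\Big\},
\]
which is finite and non-decreasing in $K$ since $f$ is continuous on $\overline{\Omega}\times[k,K+1]$. On the open set $\mathcal{O}_k=\{u_\epsilon>k\}$ one has $-\Delta_p(k+\lambda_\epsilon^{1/(p-1)}\mathcal{K}_2^{1/(p-1)}e)=\lambda_\epsilon \mathcal{K}_2\geq \lambda_\epsilon f(x,u_\epsilon+\epsilon)/M(x,\|\nabla u_\epsilon\|_p^p)=-\Delta_p u_\epsilon$ with the comparison valid on $\partial\mathcal{O}_k$, and the classical comparison principle gives the upper estimate throughout $\Omega$.

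For the \emph{left} inequality, note first that $(f_2)$ with $\delta<p-1$ implies hypothesis $(f_0)$, so $f(x,t)\geq \tilde K t^{p-1}$ for all small $t>0$ once $\tilde K$ is fixed. Arguing as in Lemma \ref{L2}, one first obtains the quantitative lower bound $\lambda_\epsilon\geq C_*>0$ by observing that the contrary, together with the already established upper estimate taken at $k=\delta'/4$ where $\delta'=\mathrm{dist}(\partial U,(0,0))$, would force $(\lambda_\epsilon,u_\epsilon)\in B_{3\delta'/4}(0,0)$, contradicting $(\lambda_\epsilon,u_\epsilon)\in\partial U$. Then set $\underline{u}_\epsilon=\lambda_\epsilon^{1/(p-1)}\mathcal{K}_1\phi_1$ with $\mathcal{K}_1=a/(4K^{1/(p-1)}\|\phi_1\|_\infty)$, apply Picone's inequality exactly as in (\ref{9}) with $M_K$ in place of $A_K$, and choose $\tilde K>\lambda_1 M_K/C_*$ to reach a contradiction on the set $[\underline{u}_\epsilon>u_\epsilon]$ whenever $\epsilon<\epsilon_0:=a/4$; this yields $\underline{u}_\epsilon\leq u_\epsilon$ a.e.

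The one non-routine ingredient is the a priori gradient bound from Lemma \ref{L7}, without which the Kirchhoff coefficient cannot be bounded from above and the whole Picone argument collapses; everything else is a direct transcription of the proof of Lemma \ref{L2} with the substitutions $a_K\mapsto \underline{a}$ and $A_K\mapsto M_K$.
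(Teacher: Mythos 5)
Your proof is correct and follows essentially the same route as the paper's: the paper likewise sets $\mathcal{K}_2(k,K)=\max\{f(x,t)/\underline{a}:x\in\overline{\Omega},\,k\le t\le K+1\}$ (using $M\ge\underline{a}$) and, for the lower bound, replaces $A_K$ in inequality (\ref{9}) by $A'_U:=\max\{M(x,t):x\in\overline{\Omega},\,0\le t\le C(U)^p\}$ with $C(U)$ from Lemma \ref{L7}, which is the same quantity as your $M_K$ up to the obvious domination $a(x)+b(x)t^\gamma\le\max a+\max b\,t^\gamma$. Your explicit remark that $(f_2)$ with $\delta<p-1$ implies $(f_0)$, so the Picone contradiction still applies, is a correct and worthwhile clarification of a point the paper leaves implicit.
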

\begin{proof}
Define $\mathcal{K}_2(k,K) = \max\Big\{\frac{f(x,t)}{\underline{a}} ~x \in \overline{\Omega}: ~k \leq t \leq K+1\Big\}$, where $k \in (0, K]$. For this constant, a second inequality in (\ref{H6}) holds.

To obtain the first inequality, we must proceed as in the proof of the first inequality in Lemma \ref{L2}. To get the constant $\mathcal{K}_1(K, U)$, in (\ref{9}) we choose  $A'_U := \max\{M(x,t): x \in \overline{\Omega} ~\mbox{and} ~ 0 \leq t \leq C(U)^p\}$ instead of $ A_K$, where $C(U)$ is given in the Lemma \ref{L7}.
\fim
\end{proof}

Now we are ready to prove the Theorem \ref{T5}.
\vspace{0.2cm}

\textbf{\textit{Proof of Theorem \ref{T5}:}}
Suppose that $\epsilon_n \rightarrow 0^+$ and denote by $\Sigma_n \subset \mathbb{R}^+ \times C(\overline{\Omega})$ the  component associated with the problem $(Q_{\epsilon_n})$. Let  $U \subset \mathbb{R} \times C(\overline{\Omega})$ be an open neighborhood of $(0,0)$. As $\Sigma_n$ is unbounded, there exists $(\lambda_n, u_n) \in \Sigma_n \cap \partial U$ and $K> 0$ such that $\lambda_n\leq K, \|u_n\|_{\infty} \leq K$. Moreover, from Lemma \ref{L7} we can assume, without loss of generality, that $\|\nabla u_n\|_p^p \leq K$ and from Lemma \ref{L5} that $\lambda_n \rightarrow \lambda > 0^+$, up to a subsequence. As a consequence, for $\delta'> 0$ small there exists $n_0 \in \mathbb{N}$ such that $0 < \lambda - \delta' < \lambda_n < \lambda + \delta' $ for all $n \geq n_0$, which implies again by the Lemma \ref{L5} that
\begin{equation}\label{H10}
(\lambda - \delta')^{1/(p-1)}\mathcal{K}_1(K, U)\phi_1 \leq u_n \leq k + (\lambda + \delta')^{1/(p-1)}\mathcal{K}_2(k,K)^{1/(p-1)}e ~~\mbox{in} ~\Omega,
\end{equation}
for each $k \in (0, K]$.

From Lemma \ref{L7}, $\{u_n\}$ being bounded in  $W_0^{1,p}(\Omega)$,
there exists $u=u_\lambda \in W_0^{1,p}(\Omega)$ such that  $u_n \rightharpoonup u$ in $W_0^{1,p}(\Omega)$ weakly. Proceeding as in the proof of Theorem \ref{T1}, we conclude by (\ref{H10}) that $u$ satisfies
\begin{equation}\label{H11}
\displaystyle\int_{\Omega}|\nabla u|^{p-2}\nabla u\nabla \varphi dx = \lambda\displaystyle\int_{\Omega}\frac{f(x,u)}{M(x, \|\nabla u\|_p^p)}\varphi dx, ~~\mbox{for all} ~\varphi \in C_c^{\infty}(\Omega).
\end{equation}
Let us prove that (\ref{H11}) holds also for $\varphi \in W_0^{1,p}(\Omega)$. For this, take $\varphi \in W_0^{1,p}(\Omega)$. Then, by a density results, there exists a sequence $\{\varphi_n\} \in C_c^{\infty}(\Omega)$ such that $\varphi_n \rightarrow \varphi$ in $W_0^{1,p}(\Omega)$.  Now, for each $\epsilon > 0$ the function $\phi = \sqrt{\epsilon^2 + |\varphi_n -
 \varphi_k|^2} + \epsilon \in C_c^1(\Omega)$ and hence taking $\phi$ as a test function in (\ref{H11}), we obtain
 \begin{eqnarray*}
 \lambda\displaystyle\int_{\Omega}\frac{f(x,u)}{M(x, \|\nabla u\|_p^p)}\Big(\sqrt{\epsilon^2 + |\varphi_n -\varphi_k|^2} - \epsilon\Big)dx & = & \displaystyle\int_{\Omega}|\nabla u|^{p-2}\nabla u \frac{|\varphi_n - \varphi_k|\nabla(\varphi_n - \varphi_k)}{\sqrt{\epsilon^2 + |\varphi_n -\varphi_k|^2}}dx \\
 &\leq & \displaystyle\int_{\Omega}|\nabla u|^{p-1}|\nabla(\varphi_n - \varphi_k)|dx \\
 &\leq & C \|\nabla u\|_p^{p-1}\|\nabla(\varphi_n - \varphi_k)\|_p.
\end{eqnarray*}
Applying the Fatou's Lemma, we obtain by the previous inequality
\begin{eqnarray*}  \lambda\displaystyle\int_{\Omega}\frac{f(x,u)}{M(x, \|\nabla u\|_p^p)} |\varphi_n -\varphi_k|dx &\leq & \displaystyle\liminf_{\epsilon \rightarrow 0^+} \lambda\displaystyle\int_{\Omega}\frac{f(x,u)}{M(x, \|\nabla u\|_p^p)}\Big(\sqrt{\epsilon^2 + |\varphi_n -\varphi_k|^2} - \epsilon\Big)dx \\ &\leq&  C \|\nabla u\|_p^{p-1}\|\nabla(\varphi_n - \varphi_k)\|_p.
\end{eqnarray*}
Letting $n, k \rightarrow \infty$ in the previous inequality we obtain
$$ \lambda\displaystyle\int_{\Omega}\frac{f(x,u)}{M(x, \|\nabla u\|_p^p)} |\varphi_n -\varphi_k|dx \rightarrow 0.  $$ Thus, we have
\begin{equation}\label{H15}
\displaystyle\int_{\Omega}\frac{f(x,u)}{M(x, \|\nabla u\|_p^p)} \varphi_n dx {\longrightarrow} \lambda\displaystyle\int_{\Omega}\frac{f(x,u)}{M(x, \|\nabla u\|_p^p)} \varphi dx ~\mbox{as} ~{n \rightarrow \infty}.
\end{equation}
By classical density arguments, we also have
\begin{equation}\label{H16}
\displaystyle\int_{\Omega}|\nabla u|^{p-2}\nabla u\nabla \varphi_n dx {\longrightarrow} \displaystyle\int_{\Omega}|\nabla u|^{p-2}\nabla u\nabla \varphi dx ~\mbox{as} ~{n \rightarrow \infty}.
\end{equation}
Therefore, joining (\ref{H15}) and (\ref{H16}) we obtain that $u \in W_0^{1,p}(\Omega)\cap C(\overline{\Omega})$ is solution of $(Q)$ and satisfies (\ref{H10}).

Now, if we consider $\mathcal{F}$ as in the proof of Theorem \ref{T1}, then in a similar way we can show that closed and bounded ( in $\mathbb{R}\times C(\overline{\Omega}))$ subsets of $\mathcal{F}$ are compacts and this ends the proof of existence of the unbounded \textit{continuum} $\Sigma$.

The proof of $Proj_{\mathbb{R}^+} \Sigma = (0, \infty) $  if $(f_\infty)$ holds, is the same as done in the proof of Theorem \ref{T1}.

Now, suppose that there exists a constant $C>0$, independent of $\lambda$ and $u$, such that $\|u\|_{\infty} \leq C$  whenever  $(\lambda, u) \in \Sigma$. Then, let us take $(\lambda, u) \in \Sigma$ with $\lambda > 1$, so $u $ satisfies
$$ -\Delta_p u \geq \frac{\lambda C_1}{\displaystyle\max_{\overline{\Omega}} a(x) + \displaystyle\max_{\overline{\Omega}} b(x) \|\nabla u\|_p^{p\gamma}} u^{\delta}. $$

Besides this, for $\epsilon > 0$ small $\underline{u} = \Big(\frac{{{\epsilon \lambda}}}{\displaystyle\max_{\overline{\Omega}} a(x) + \displaystyle\max_{\overline{\Omega}} b(x) \|\nabla u\|_p^{p\gamma}}\Big)^{1/(p-1-\delta)} \phi_1^{\frac{p}{p-1-\delta}} $ satisfies
$$-\Delta_p \underline{u} \leq  \frac{\lambda C_1}{\displaystyle\max_{\overline{\Omega}} a(x) + \displaystyle\max_{\overline{\Omega}} b(x) \|\nabla u\|_p^{p\gamma}} \underline{u}^{\delta}, $$ and so we get by Theorem \ref{unicidade1} that
$u\geq \underline{u}$. Taking $u$ as a test function in $(Q)$ and using $\lambda > 1$, $u\geq \underline{u}$ and $\|u\|_{\infty} \leq C$, we obtain that
\begin{equation}\label{H20}
\left\{
\begin{array}{l}
\displaystyle\int_{\Omega}|\nabla u|^pdx \leq C_1\lambda ~~~ \mbox{if} ~\delta \geq -1 \\
\displaystyle\int_{\Omega}|\nabla u|^pdx \leq C\lambda^{\frac{p}{p-1-\delta}}(\|\nabla u\|_p^{\frac{p(-\delta -1)\gamma}{p-1-\delta}} + 1) ~~~ \mbox{if} ~ -\frac{2p-1}{p-1} < \delta < -1.
\end{array}
\right.
\end{equation}
Without loss of generality, let us assume that $\|\nabla u\|_p > 1$, otherwise we would get
$$C\geq u\geq \underline{u} \geq  \Big(\frac{{{\epsilon \lambda}}}{\displaystyle\max_{\overline{\Omega}} a(x) + \displaystyle\max_{\overline{\Omega}} b(x) }\Big)^{1/(p-1-\delta)} \phi_1^{\frac{p}{p-1-\delta}}~\mbox{for all }\lambda>0. $$
Then, coming back to (\ref{H20}) and using $\|\nabla u\|_p > 1$, we obtain  for $ -\frac{2p-1}{p-1} < \delta < -1$ that $\|\nabla u\|_p \leq C\lambda^{\frac{1}{p +(\delta+1)(\gamma -1)}}$. Thus, as $u \geq \underline{u}$ we have
\begin{equation}\label{H17}
u \geq   C\Big(\frac{{{ \lambda}}}{1 +  \lambda^{\frac{p\gamma}{p+(\delta+1)(\gamma -1)}}}\Big)^{1/(p-1-\delta)} \phi_1^{\frac{p}{p-1-\delta}}.
\end{equation}
Also, when $\delta \geq -1$ by (\ref{H20}) we get
\begin{equation}\label{H19}
u \geq  C\Big(\frac{{{ \lambda}}}{1 +  \lambda^{\gamma}}\Big)^{1/(p-1-\delta)} \phi_1^{\frac{p}{p-1-\delta}}.
\end{equation}
Then, from (\ref{H17}) and (\ref{H19}) with $ \gamma < 1$, it follows that  $\|u\|_{\infty} {\rightarrow} \infty $ as $\lambda \rightarrow \infty$, contradicting the fact that $\|u\|_{\infty} \leq C$.
\fim


\begin{thebibliography}{10}

\bibitem{arg}
Ravi~P. Agarwal, Boaqiang Yan, and Donal O'Regan.
\newblock The existence of positive solutions for {K}irchhoff-{type} problems
  via the sub-supersolution method.
\newblock {\em An. St. Univ. Ovidius Constanta}, 26(1):4--41, 2018.

\bibitem{MR3316619}
Claudianor~O. Alves and Drago\c{s}-P\v{a}tru Covei.
\newblock Existence of solution for a class of nonlocal elliptic problem via
  sub-supersolution method.
\newblock {\em Nonlinear Anal. Real World Appl.}, 23:1--8, 2015.

\bibitem{MR1380234}
J.~J. Aly.
\newblock Thermodynamics of a two-dimensional self-gravitating system.
\newblock {\em Phys. Rev. E (3)}, 49(5, part A):3771--3783, 1994.

\bibitem{MR3101144}
David Arcoya, Tommaso Leonori, and Ana Primo.
\newblock Existence of solutions for semilinear nonlocal elliptic problems via
  a {B}olzano theorem.
\newblock {\em Acta Appl. Math.}, 127:87--104, 2013.

\bibitem{MR1183665}
Lucio Boccardo and Fran\c{c}ois Murat.
\newblock Almost everywhere convergence of the gradients of solutions to
  elliptic and parabolic equations.
\newblock {\em Nonlinear Anal.}, 19(6):581--597, 1992.

\bibitem{MR2409464}
H.~Bueno, G.~Ercole, W.~Ferreira, and A.~Zumpano.
\newblock Existence and multiplicity of positive solutions for the
  {$p$}-{L}aplacian with nonlocal coefficient.
\newblock {\em J. Math. Anal. Appl.}, 343(1):151--158, 2008.

\bibitem{MR1145596}
E.~Caglioti, P.-L. Lions, C.~Marchioro, and M.~Pulvirenti.
\newblock A special class of stationary flows for two-dimensional {E}uler
  equations: a statistical mechanics description.
\newblock {\em Comm. Math. Phys.}, 143(3):501--525, 1992.

\bibitem{MR1603446}
M.~Chipot and B.~Lovat.
\newblock Some remarks on nonlocal elliptic and parabolic problems.
\newblock In {\em Proceedings of the {S}econd {W}orld {C}ongress of {N}onlinear
  {A}nalysts, {P}art 7 ({A}thens, 1996)}, volume~30, pages 4619--4627, 1997.

\bibitem{MR2098510}
Francisco Julio S.~A. Corr\^ea.
\newblock On positive solutions of nonlocal and nonvariational elliptic
  problems.
\newblock {\em Nonlinear Anal.}, 59(7):1147--1155, 2004.

\bibitem{MR2012587}
Francisco Julio S.~A. Corr\^ea, Silvano D.~B. Menezes, and J.~Ferreira.
\newblock On a class of problems involving a nonlocal operator.
\newblock {\em Appl. Math. Comput.}, 147(2):475--489, 2004.

\bibitem{MR2755414}
Francisco Julio S.~A Corr\^ea and Giovany~M. Figueiredo.
\newblock A variational approach for a nonlocal and nonvariational elliptic
  problem.
\newblock {\em J. Integral Equations Appl.}, 22(4):549--557, 2010.

\bibitem{MR0427826}
M.~G. Crandall, P.~H. Rabinowitz, and L.~Tartar.
\newblock On a {D}irichlet problem with a singular nonlinearity.
\newblock {\em Comm. Partial Differential Equations}, 2(2):193--222, 1977.

\bibitem{MR3745736}
Gelson C.~G. dos Santos, Giovany~M. Figueiredo, and Leandro~S. Tavares.
\newblock A sub-supersolution method for a class of nonlocal problems involving
  the {$p(x)$}-{L}aplacian operator and applications.
\newblock {\em Acta Appl. Math.}, 153:171--187, 2018.

\bibitem{MR1895897}
Yihong Du.
\newblock Bifurcation from infinity in a class of nonlocal elliptic problems.
\newblock {\em Differential Integral Equations}, 15(5):587--606, 2002.

\bibitem{MR3694626}
Tarcyana~S. Figueiredo-Sousa, Cristian Morales-Rodrigo, and Antonio Su\'arez.
\newblock A non-local non-autonomous diffusion problem: linear and sublinear
  cases.
\newblock {\em Z. Angew. Math. Phys.}, 68(5):Art. 108, 20, 2017.

\bibitem{MR1037213}
A.~C. Lazer and P.~J. McKenna.
\newblock On a singular nonlinear elliptic boundary-value problem.
\newblock {\em Proc. Amer. Math. Soc.}, 111(3):721--730, 1991.

\bibitem{MR3250494}
Chun-Yu Lei, Jia-Feng Liao, and Chun-Lei Tang.
\newblock Multiple positive solutions for {K}irchhoff type of problems with
  singularity and critical exponents.
\newblock {\em J. Math. Anal. Appl.}, 421(1):521--538, 2015.

\bibitem{MR3352001}
Jia-Feng Liao, Peng Zhang, Jiu Liu, and Chun-Lei Tang.
\newblock Existence and multiplicity of positive solutions for a class of
  {K}irchhoff type problems with singularity.
\newblock {\em J. Math. Anal. Appl.}, 430(2):1124--1148, 2015.

\bibitem{MR0301587}
Paul~H. Rabinowitz.
\newblock Some global results for nonlinear eigenvalue problems.
\newblock {\em J. Functional Analysis}, 7:487--513, 1971.

\bibitem{NOSSO}
C.~A. Santos and L.~M. Santos, \textsl{How to break the uniqueness of $W^{1,p}_\mathrm{loc}(\Omega)$-solutions for very singular elliptic problems
  by non-local terms}, Zeitschrift f\"ur Angewandte Mathematik und Physik. ZAMP. \textbf{69} (2018), no 6, 145: 22 pp.

\bibitem{MR0099642}
Gordon~Thomas Whyburn.
\newblock {\em Topological analysis}.
\newblock Princeton Mathematical Series. No. 23. Princeton University Press, Princeton, N. J., 1958.
  
\bibitem{MR3665600}
{ Baoqiang Yan and Qianqian Ren.}
\newblock Existence, uniqueness and multiplicity of positive solutions for some nonlocal singular elliptic problems.
\newblock {\em Electron. J. Differential Equations}, 138, 21, 2017.


\end{thebibliography}
\end{document}